\theoremstyle{plain}
\newtheorem{thm}{Theorem}[section]
\newtheorem{lem}[thm]{Lemma}
\newtheorem{prop}[thm]{Proposition}
\newtheorem{thm-defn}[thm]{Theorem-Definition}
\theoremstyle{definition}
\newtheorem{defn}[thm]{Definition}
\newtheorem{exmp}[thm]{Example}
\theoremstyle{remark}
\newtheorem{rem}[thm]{Remark}
\newcommand{\SG}{A(Q,I, \mathrm{Sp})}
\newcommand{\Qsp}{Q_0^{\times}}
\newcommand{\Hom}{\operatorname{Hom}}
\newcommand{\soc}{\operatorname{soc}}
\newcommand{\orbi}{\mathcal{O}}
\newcommand{\mf}{\mathfrak{m}}
\newcommand{\of}{\mathfrak{o}}
\DeclareMathOperator{\val}{val}
\title{Skew-Brauer graph algebras}
\begin{document}
\author[Garc\'ia-Elsener]{Ana Garc\'ia-Elsener}
\address{\textbf{[AGE]} 
Facultad de Ciencias Exactas y Naturales (CEMIM)\\
Universidad Nacional de Mar del Plata and CONICET. Dean Funes 3350. 
(7600) Mar del Plata, Argentina.}
\email{elsener@mdp.edu.ar}
\author[Guazzelli]{Victoria Guazzelli}
\address{\textbf{[VG]}
Facultad de Ciencias Exactas y Naturales\\
Universidad Nacional de Mar del Plata. 
Dean Funes 3350. 
(7600) Mar del Plata, Argentina.}
\email{vguazzelli@mdp.edu.ar}

\author[Valdivieso]{Yadira Valdivieso}
\address{\textbf{[YV]}
Departamento de Matem\'aticas, Universidad de las Am\'ericas Puebla.
Ex Hacienda Sta. Catarina Mártir S/N. (72810) San Andrés Cholula, Puebla, México.}
\email{yadira.valdivieso@udlap.mx}

\thanks{AGE, VG and YV would like to thank the Isaac Newton Institute for Mathematical Sciences, Cambridge, for support and hospitality during the programme “Cluster Algebras and Representation Theory” where part of the work on this paper was undertaken, this work was supported by EPSRC grant EP/R014604/1. 
YV was supported by EPSRC grant no EP/K032208/1 and the Simons Foundation. 
YV is also supported by the European Union’s Horizon 2020 research and innovation programme under the Marie Sklodowska-Curie grant agreement No H2020-MSCA-IF-2018-838316.
AGE was supported by PICT 2021-I–A-01154 Agencia Nacional de Promoción Científica y Tecnológica and by the EPSRC grant EP/R009325/1. We especially thank Elsa Fernández for her valuable comments on iterated tilted algebras.}

\maketitle
\begin{abstract}
In this work, we introduce a new class of algebras that we denominate skew-Brauer graph algebras. This class contains and generalizes the Brauer graph algebras. We establish that skew-Brauer graph algebras are symmetric and can be defined based on a Brauer graph with additional information.  We show that the class of trivial extensions of  skew-gentle algebras coincides with a subclass of skew-Brauer graph algebras, where the associated skew-Brauer graph has multiplicity function identically equal to one, generalizing a result over gentle algebras. Furthermore, we characterize skew-Brauer algebras of finite representation type. Finally, we provide a geometric interpretation of cut-sets and reflections of algebras using orbifold dissections.
\end{abstract}


\section{Introduction}

Brauer graph algebras are defined  combinatorially through a graph with some additional data on their vertices, given by a multiplicity function and an order over the edges attached to each vertex. They appeared first in representation theory of groups, and were defined in \cite{DF78} to classify indecomposable modules over quasi-Frobenious algebras. 

Brauer graph algebras have been recently related to cluster algebras of surfaces, gentle algebras and mirror symmetry, see for example \cite{MS14, Sch15, OPS18}, moreover they have played an important role in the representation theory of tame symmetric algebras. Most of the relevant questions about their homological properties have been studied, for example, it is well understood their derived equivalence classes \cite{OZ22} \cite{AZ22}, the classification of their two-term tilting complexes \cite{AACh18} and their first Hochschild cohomology group \cite{ChSS20}. A natural relation between trivial extension of gentle algebras and a subclass of Brauer graph algebras has been described in \cite{Sch15}, more precisely, the paper shows that any trivial extension of a gentle algebra is actually a Brauer graph algebra, in which the multiplicity function is identically equal to one.

Several generalizations of Brauer graph algebras have been defined using slightly modified graphs or definitions, in which important properties of the algebra, and its modules, can be directly understood from its graph. See for example \cite{GS17} where the so-called \emph{Brauer  configuration algebras} are defined, or \cite{Sko2006} where the \emph{modified Brauer algebras} are constructed.

In this paper, we define what we call a skew-Brauer graph, which is by definition a Brauer graph $\Gamma$, that has a set of special vertices $\Gamma_0^{\times}$ with valency one and multiplicity function equal to one. Using this extra data we define a path algebra called skew-Brauer graph algebra, in order to link this new family of algebras and the trivial extensions of skew-gentle algebras, which are skew-group algebras of gentle algebras with a $\mathbb{Z}_2$ action.

Skew-gentle algebras were defined in \cite{G99} to study a subclass of matrix-problems, and to describe explicitly their Auslander-Reiten sequences. This family of tame algebras is also related to several subjects, for example, they were used in \cite{GLFS16} to prove that a large class of Jacobian algebras arising from closed surfaces are also tame. Skew-gentle algebras are a generalization of gentle algebras, as they are, by definition, skew-group algebras of gentle. Both families, gentle and skew-gentle algebras share several properties, for example both of them are Gorenstein, see \cite{GR2005}.

Any skew-gentle algebra can also be defined by a triple $(Q,I,\text{Sp})$, where $Q$ is a quiver, $I$ is a subset of paths of length two, and $\text{Sp}$ is a set of loops in $Q$, subject to certain restriction. This triple defines two gentle algebras, one of which we call the underlying gentle algebra. Using this definition, we show that the original skew-gentle algebra and the 
underlying gentle algebra also share some properties.

In this work, we study trivial extensions of skew-gentle algebras, and we show that there is a simple way to define such trivial extension using the trivial extension of the underlying gentle algebra. Even more, we show that any trivial extension of a skew-gentle algebra is a skew-Brauer graph algebra. More precisely, we have the following result.

\smallskip

\noindent\textbf{Theorem A} (Theorem~\ref{thm: skew brauer simetricas}, Theorem~\ref{thm: equiv1} and Theorem~\ref{thm:admissiblecuts} ){\it \,
\begin{enumerate}
    \item[i)] Any skew-Brauer graph algebra is a symmetric finite dimensional algebra.
    \item[ii)] The class of trivial extension of skew-gentle algebras coincides with the subclass of skew-Brauer graph algebras with multiplicity function identically equal to one.
\end{enumerate}}

Deciding whether a trivial extension of an algebra is of finite representation type is a question that has been addressed by several authors. In this work, we consider two different tools that were used to solve that problem. The first one is the admissible cuts or cut sets, defined in \cite{F99, FP06}, to provide a complete characterization of trivial extension of finite representation type. The second one is reflections of quivers and relations, defined in \cite{HW83}, to show that any trivial extension is of finite representation type if and only if it is related to an algebra of Dynkin type through sequences of reflections. In this paper, we study reflections and cut sets that preserve the class of skew-gentle algebras and their trivial extensions, in this context, we have the following results.

\smallskip
\noindent\textbf{Theorem B} (Theorem~\ref{thm: refl-skew} and Theorem~\ref{thm:cuts and reflections}){\it \,
Let $A=\SG$ be a skew-gentle algebra,  $A'$ be its underlying gentle algebra and $i$ be a source in $A$. Then the reflection $S_i$ of $A'$ gives rise to a sequence of reflections  $\overline{S_i}$ on $A$ such that $\overline{S_i}(A)$ is skew-gentle. Furthermore there exists a cut set $\mathcal{D}$ of $T(A)$ such that $\overline{S_i}= T(A)/\langle \mathcal{D}\rangle $}

Following a natural question, we also show the following characterization of skew-Brauer graph algebras of finite representation type, see definition of skew-Brauer tree in Section~\ref{sec:finrep}.

\smallskip
\noindent\textbf{Theorem C} (Theorem~\ref{thm:finite representation type} and Theorem~\ref{thm:skew-brauer tree algebras}){\it \,
Let $B_{\Gamma^{\times}}$ be a skew-Brauer graph algebra.  Then $B_{\Gamma^{\times}}$ is of finite representation type if and only if $\Gamma^{\times}$ is a skew-Brauer tree. Moreover, in this case, $B_{\Gamma^\times}$is isomorphic to the trivial extension of an iterated tilted algebra of type $\mathbb{D}$.}

Additionally, we present a geometric interpretation of cut sets and reflections using orbifold dissections, a geometric model inspired on the one for the derived category of skew-gentle algebras and the Koszul dual algebra, see \cite{AB} and \cite{LSV}. More specifically, we establish the following theorems.

\smallskip
\noindent\textbf{Theorem D} (Theorem~\ref{thm:contracting and adding}){\it \,
Let $A=\SG$ be a skew-gentle algebra, $A^{\mathrm{sg}}$ be its admissible presentation and $(S,M, P, \mathcal O, D^{\times})$ be its orbifold $\color{red}{\times}$-dissection. Then, any good cut set of the trivial extension $T(A^{\mathrm{sg}})$ of $A^{\mathrm{sg}}$ is induced by a sequence of contracting and adding boundary segments in $(S,M, P, \mathcal O, D^{\times})$.}

As a consequence of Theorem B and Theorem D, one can prove that the sequences of reflections  described in Theorem B are also induced by contracting and adding boundary segments.

\noindent\textbf{Remark} 
\begin{enumerate}
    \item Generalizations of Brauer graphs have been studied by several authors. In particular, generalizations related to trivial extension of iterated tilted of type $\mathbb{D}_n$ appeared in \cite{Nor21}, where skew-Brauer trees are named as modified Brauer tree (and double edge algebras).
    \item This work started in November 2021 in the Newton Institute, since then, the authors presented preliminary results in several conferences and workshops \footnote{II Encuentro conjunto RSME-UMA, December 12-16, 2022.
    Website \href{https://www.rsme.es/congresos-y-escuelas/congresos-conjuntos-con-otras-sociedades-cientificas/ii-encuentro-conjunto-rsme-uma/}{https://www.rsme.es/congresos-y-escuelas/congresos-conjuntos-con-otras-sociedades-cientificas/ii-encuentro-conjunto-rsme-uma/}}
    \footnote{Combinatorial aspects of Representation Theory, March 28-31, 2023.
    Website \href{https://wiki.math.ntnu.no/combrep/program}{https://wiki.math.ntnu.no/combrep/program/}}
    \footnote{FD seminar, Talk 133. September 14th, 2023.
    Website \href{https://www.fd-seminar.xyz/talks/2023-09-14/}{https://www.fd-seminar.xyz/talks/2023-09-14/}}.
\item Recently, we found a similar definition of skew-Brauer graph algebras in \cite{Soto24}.
\end{enumerate}

\section{Background}
Let $K$ be an algebraically closed field of characteristic $\neq 2$. Let $Q=(Q_0,Q_1)$  be a finite quiver, where  $Q_0$ denotes the set of vertices and  $Q_1$ denotes the set of arrows. For an arrow $\alpha $ denote $s(\alpha)$ the starting vertex and $t(\alpha)$ the ending vertex. A \textit{path} $p$ of length $l>0$ from the vertex $x$ to the vertex $y$ in $Q$ is an ordered set of arrows $\{\alpha_1, \dots, \alpha_l\}$ such that $s(\alpha_1)=x$, $t(\alpha_l)=y$ and $t(\alpha_i)=s(\alpha_{i+1})$ for all $1 \leq i \leq l-1$, and we define $s(p)=s(\alpha_1)$ and $t(p)=t(\alpha_l)$, and by abuse of notation, we write $p = \alpha_1\alpha_2 \dots \alpha_l$. A path $p$ is a cycle if $s(p)=t(p)$. In addition, for each vertex $i$ we denote by $e_i$ the \emph{stationary path} of length 0. %

Consider the algebra $K Q$ generated by paths of any length, and denote by $J$ the two-sided ideal generated by arrows. A two-sided ideal $I$ of $KQ$ is called \emph{admissible} if there exists $m \geq 2$ such that $J^m \subset I \subset J^2$, where $J^m$ is the ideal generated by paths of length $m$ and $J^2$ is the ideal generated by paths of length 2. The set of  generators in $I$ is called \emph{a set of relations}. 

The pair $(Q,I)$ is said to be a \emph{bound quiver}, and the algebra $KQ/I$ is the bound quiver algebra. Every basic and connected finite dimensional $K$-algebra $A$ can be defined by a bound quiver with an admissible ideal $(Q_A,I_A)$ \cite[Theorem 3.7]{assem2006}, in this case $Q_A$ is called the ordinary quiver of $A$.

Finally a path $p$ in $Q$ is non-zero in $(Q, I)$ if $p\notin I$, and a non-zero path $p$ is \emph{maximal} if for any arrow $\alpha\in Q$, the paths $\alpha p$ and $p\alpha$ are elements of $I$.

\subsection{Gentle and skew-gentle algebras}\label{sec-skew-gentle}
skew-gentle algebras were defined in \cite{G99} as certain skew-group algebras of gentle algebras. 
We first recall the definition of gentle algebra.

\begin{defn}\label{defn:gentle}
A pair $(Q,I)$ given by a quiver $Q$ and an ideal $I$ generated by a set of paths of length two in $Q$ is \emph{gentle} if:

\begin{enumerate}
\item\label{item:def-gentle-at-most-two} there is at most two arrows ending and starting at each vertex $x$;

\item\label{item:def-gentle-at-most-one-in-I} for every arrow $\alpha$ in  $Q_1$ there is at most one arrow $\beta$  such that $t(\alpha) = s(\beta)$ and $\alpha\beta\in I$, and there is at most one arrow $\beta'$ such that  $t(\alpha) = s( \beta')$ and $\alpha\beta' \notin I$;

\item\label{item:def-gentle-at-most-one-outside-I} for every arrow $\alpha$ in $Q_1$ there is at most one arrow $\gamma$ such that $s(\alpha) = t( \gamma)$ and  $\gamma\alpha\in I$, and there is at most one arrow $\gamma'$ such that $s(\alpha) = t( \gamma')$ and $\gamma'\alpha\not\in I$; 

\item\label{item:def-gentle-I-generated-by-paths-of-length-2} the path algebra $A(Q,I)=KQ/I$ is finite dimensional.
\end{enumerate}

An algebra $A$ is \emph{gentle} if it is Morita equivalent to an algebra $A(Q,I)=KQ/I$, where $(Q,I)$ is a gentle pair.

If the bound quiver $(Q, I)$ satisfies \eqref{item:def-gentle-at-most-two}, \eqref{item:def-gentle-at-most-one-in-I}, and \eqref{item:def-gentle-at-most-one-outside-I},  we say that the quotient $KQ/I$ is \emph{locally gentle}.
\end{defn}

\begin{defn}\label{defn:skew-gentle}
A triple $(Q,I,\text{Sp})$ given by a quiver $Q$, an ideal $I$ generated by a set of paths of length two in $Q$, and a subset Sp of loops in $Q$ is \emph{skew-gentle} if the pair $(Q, \langle I\sqcup \{f^2: f\in \text{Sp}\}\rangle )$ is gentle. 

An algebra $A$ is \emph{skew-gentle} if it is Morita equivalent to an algebra $A(Q,I,\text{Sp})=KQ/\langle I\sqcup\{f^2-f: f\in \text{Sp}\}\rangle$, where $(Q,I, \text{Sp})$ is a skew-gentle triple.

We refer to the elements of Sp as \emph{special loops}, and we denote by $Q_0^{\times}$  the set of \emph{special vertices} of $Q$ where a special loop is attached.
\end{defn}

It is clear that, in general, the ideal associated with a skew-gentle algebra $\SG$ is not admissible. However, in \cite{G99}, the authors provide an admissible presentation of $\SG$. The reader can see this construction in Example \ref{ex: toy example}.

\subsection{Trivial extensions}\label{sec-trivial-extensions}

Trivial extensions of artin algebras play an important role in representation theory. In this section we describe a construction for the quiver and relations of trivial extension of a bound quiver algebra. We mainly follow the articles \cite{FP02,FSTTV22,Sch15}.

The \emph{trivial extension of an algebra $A$} is the algebra $T(A)=A\ltimes DA$ with underlying vector space $A\oplus DA$ and with the multiplication defined by $(a,f)(b,g)=(ab, ag+fb)$, for $a, b\in A$ and $f,g\in DA$, where $D=\Hom_K(-, K)$.

For every finite dimensional algebra $A = KQ_A/I_A$, where $I_A$ is an admissible ideal, we can construct its trivial extension $T(A)$ based on the quiver and relations of $A$ and a basis of the two-sided ideal $\soc_{A^e}A$, where $A^e$ is the enveloping algebra. 

Recall that, as a $K$-vector space, $\soc_{A^e}A$ can be defined as the intersection of
\[ \soc (_AA) = \{a\in A \colon Ja =0 \} \ \ \ \mathrm{and} \ \ \ \soc (A_A) = \{a\in A \colon aJ =0 \}, \]
where $J$ denotes the Jacobson radical, i.e. the ideal generated by arrows. See \cite[Exercise 4.20]{lam1991} and 
\cite[Lemma 2.10]{assem2006}.

Let  $\mathcal M=\{p_1, \dots, p_r\}$ be a $K$-basis of $\soc_{A^e}A$. The ordinary quiver of $T(A)$ is given by:
\begin{enumerate}
\item $(Q_{T(A)})_0=(Q_A)_0$
\item $(Q_{T(A)})_1=(Q_A)_1 \cup \{\beta_{p_1},\dots, \beta_{p_r}\}$, where $\beta_{p_i}$ is an arrow from $t(p_i)$ to $s(p_i)$ for each $p_i\in \mathcal{M}$, with $1\leq i\leq r$.
\end{enumerate}

To give an explicit description of the ideal of relations $I_{T(A)}$ we need to recall some definitions from \cite[Section 3]{FP02}.

\begin{defn}\label{def:elementary}
Let $\mathcal{M}=\{p_1, \dots, p_r\}$ be a basis of $\soc_{A^e} A$. Let
$C=\alpha_1\dots\alpha_l\beta_{p_j}$ be an oriented cycle in $KQ_{T(A)}$ such that $p_j^*(\alpha_1\dots\alpha_l)\neq 0$ where $\alpha_1
\dots \alpha_l$ is a path in $KQ_A$, $p_j\in \mathcal{M}$, and $p_j^*$ is the dual of $p_j$. The cycle $C$ and all the cyclic permutations of $C$ are called \textit{elementary}.
In this case, we say that $\omega(C)=p_j^*(\alpha_1\dots\alpha_l)\in K$ is the \textit{weight} of $C$.
\end{defn}

Let $C$ be a cycle of a quiver $Q$ and $q$ be a path contained in some cyclic permutation of $C$. The \emph{supplement of $q$} in $C$ is the (possibly stationary) path $p$  such that $C=pq$ up to cyclic equivalence.

For each vertex $x\in Q_{T(A)}$ we define the two-sided ideal $I_x'$  in $Q_{T(A)}$ generated by

\begin{enumerate}
\item Oriented cycles from $x$ to $x$ which are not contained in any elementary cycle.
\item The elements of the form $\omega (C')C-\omega (C)C'$, where $C$ and $C'$ are different elementary cycles starting and ending at $x$.
\end{enumerate}

The ideal of relations for the trivial extension of a finite dimensional algebra was described in \cite{FSTTV22}. 

\begin{thm}\cite[Theorem A.1]{FSTTV22}\label{ideal TA}
    Let $A=KQ_A/I_A$ a finite dimensional algebra and let $T(A)=KQ_{T(A)}/I_{T(A)}$ be its trivial extension. Then, the quiver $Q_{T(A)}$
is as above
and the ideal $I_{T(A)}$ is generated by the union of the following sets.

\begin{enumerate}
    \item A generating set of the ideal of relations $I_A$ of $A$.
    \item The paths that are not contained in (cyclic permutations of) any elementary cycle.
    \item Linear combination of paths $\rho  \in e_xKQ_{T(A)}e_y$ such that $\rho q\in I_x'$  or $q\rho \in I_{y}'$  for any supplement path $q$ in an elementary cycle $C$, where $I_x'$ is the ideal defined above.
\end{enumerate}
\end{thm}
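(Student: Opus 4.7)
The plan is to exhibit $T(A) = A \ltimes DA$ as an explicit quotient of $KQ_{T(A)}$ by constructing a surjective algebra map $\phi\colon KQ_{T(A)} \to T(A)$ whose kernel coincides with the ideal generated by the three families (1), (2), (3). On generators I would set $e_x \mapsto (e_x, 0)$, each original arrow $\alpha \in (Q_A)_1$ to $(\alpha, 0)$, and each new arrow $\beta_{p_j}$ to $(0, p_j^*)$, where $p_j^* \in DA$ is the functional equal to $1$ on $p_j$ and zero on the other elements of a chosen basis of $A$ extending $\mathcal{M}$. Surjectivity is then clear: $A$ lies in the image through $KQ_A$, and the $p_j^*$ generate $DA$ as an $A$-bimodule (dually to the fact that $\{p_j\}$ is a basis of $\soc_{A^e}A$).

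Next I would verify that every generator in (1), (2), (3) lies in $\ker\phi$. Family (1) is immediate, since $\phi|_{KQ_A}$ is just the given presentation of $A$. For (2), the crucial observation is $DA \cdot DA = 0$ inside $T(A)$, which kills any path passing through two or more new arrows; a path $\pi\,\beta_{p_j}\,\pi'$ maps to the bimodule product $\pi \cdot p_j^* \cdot \pi' \in DA$, and a direct evaluation against a basis of $A$ shows this is non-zero precisely when some cyclic rotation of $\pi\,\beta_{p_j}\,\pi'$ is an elementary cycle. For (3), every elementary cycle $C$ based at $x$ is sent to $\omega(C)\,\epsilon_x$ for a single fixed $\epsilon_x \in DA$ (essentially the functional dual to the socle component at $x$), so the weighted differences $\omega(C')C - \omega(C)C'$ and, more generally, any $\rho$ whose composition with every supplement $q$ lands in $I_x'$ or $I_y'$ is killed by $\phi$.

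The main obstacle is the reverse inclusion $\ker\phi \subseteq \langle (1),(2),(3)\rangle$. My approach is a dimension count: in the quotient $KQ_{T(A)}/\langle (1),(2),(3)\rangle$ I would produce a spanning set of size $\dim_K T(A) = 2\dim_K A$, taking (a) paths in $KQ_A$ modulo (1) to realize a copy of $A$, and (b) for each basis element $b$ of $A$, one distinguished $\beta_{p_j}$-path $\gamma_b$ whose image under $\phi$ is $b^*$. The delicate step is to show that (2) reduces the $\beta_{p_j}$-paths exactly to those cyclically embedded in elementary cycles, and that the relations of (3) identify two such paths precisely when their images in $DA$ coincide, producing exactly $\dim_K DA$ independent vectors in part (b). This bookkeeping ultimately rests on the bimodule duality between $\soc_{A^e}A$ and the generating part of $DA$; once it is carried out, $\phi$ factors through an isomorphism and the theorem follows.
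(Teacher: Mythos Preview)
The paper does not contain a proof of this statement: Theorem~\ref{ideal TA} is quoted verbatim from \cite[Theorem A.1]{FSTTV22} and is used as a black box throughout (e.g.\ in the proof of Theorem~\ref{teo:ideal relaciones}). There is therefore nothing in the present paper to compare your proposal against.

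As a standalone sketch, your approach is the standard one and is essentially what is carried out in \cite{FSTTV22}: build the surjection $\phi\colon KQ_{T(A)}\to T(A)$ on generators, check that the three families lie in $\ker\phi$, and then finish with a dimension count. The places you flag as delicate are exactly where the work lies. In particular, to make the argument for family~(3) precise you need to know that \emph{every} relation among the images $\phi(\gamma_b)$ in $DA$ is already captured by the supplement conditions defining $I'_x$ and $I'_y$; this is where the choice of the basis $\mathcal{M}$ of $\soc_{A^e}A$ and its dual basis in $DA$ must be used carefully, since the elementary cycles through $x$ need not all have the same weight and the identification of paths with the same image in $DA$ genuinely uses the weighted differences $\omega(C')C-\omega(C)C'$. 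Your outline acknowledges this but does not carry it out; if you want a self-contained proof you will need to write down the bijection between a basis of $e_x(DA)e_y$ and classes of subpaths of elementary cycles modulo the relations in (3) explicitly.
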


\subsection{Repetitive algebra and reflections}\label{sec:repetitive}

In this section, we recall the definition of the repetitive algebra and reflections, both of them linked to an alternative construction for a trivial extension of an algebra. We mainly follow \cite{JSch99, FP02, HW83}.

Let $A=KQ/ I$ be a finite dimensional algebra with $I$ be an ideal generated by a set of relations $\rho$ for $Q$ which are either zero relations or commutativity relations, and  let $\mathcal M$ be a set of maximal paths in $(Q,I)$. The repetitive algebra $\widehat A$ is defined as the path algebra $K\widehat{Q}/ \langle \widehat \rho\rangle$, where $\widehat Q$ and $\widehat \rho$ are defined as follows.

\begin{enumerate}
\item The set of vertices of $\widehat{Q}$ is the set $\{x[n]: x\in Q_0 \text{ and } n\in \mathbb{Z}\}$.
\item In the same way, every arrow $\alpha:x\rightarrow y$ in $Q_1$ induces arrows $\alpha[n]:x[n]\rightarrow y[n]$ in $\widehat{Q}$ for all $n\in \mathbb{Z}$. 
\item For each maximal path $p\in \mathcal{M}$ in $(Q,I)$, with $x=s(p)$ and $y=t(p)$, there is an  arrow $\overline{p}[n]:y[n]\rightarrow x[n+1]$ in $\widehat{Q}$, for all  $n\in \mathbb{Z}$. These arrows are called \emph{connecting arrows}. 
\end{enumerate}

To define the set of relations $\widehat\rho$, we need to introduce some definitions for paths in $(Q, I)$. 
Let $p$ be a path in $(Q,I)$, we denote by $p[n]$ the corresponding path in $\widehat{Q}$. If $p=p_1p_2$ is a maximal path in $(Q,I)$, then $p_2[n]\overline{p}[n]p_1[n+1]$ is called  \emph{full path} in $\widehat{Q}$.

The set of relations $\widehat{\rho}$ of $\widehat{Q}$ are given by the following list.
\begin{itemize}
    \item For each monomial relation $p\in \rho$, the path $p[n]$ is in $\widehat{\rho}$, for all  $n\in \mathbb{Z}$.
    \item For each commutative relation $p_1-p_2\in \rho$, then $p_1[n]-p_2[n]$ is in $\widehat{\rho}$, for all  $n\in \mathbb{Z}$.
    \item Let $p$ be a path in $\widehat{Q}$ which contains a connecting arrow. If $p$ is not a sub-path of a full path, then $p\in \widehat{\rho}$.
    \item If $p=p_1p_2p_3$ and $q=q_1q_2q_3$ are maximal paths in $(Q,I)$  such that $p_2=q_2$, then $p_3[n]\overline{p}[n]p_1[n+1]-q_3[n]\overline{q}[n]q_1[n+1]$ is in $\widehat{\rho}$, for all  $n\in \mathbb{Z}$.
\end{itemize}

To define reflections over an algebra, we need to consider $\nu$ the Nakayama automorphism of $\widehat A$, such that $\nu (x[n])=x[n+1]$ for every $x\in Q_0$ and every $n\in\mathbb{Z}$.

Let $x$ be a source of $Q_A$, we denote by ${\sigma_x^-Q_A}$ the full
sub-quiver of $\widehat{Q_{{A}}}$ 
with vertex set \[S=\{y[0]\in (\widehat{Q_{{A}}})_0\mid y\in (Q_A)_0,y\neq x\}\cup \{x[1]\}.\]

We note that ${\sigma_x^-Q_A}$ is a \emph{complete $\nu$-slice}, which is defined as a connected full subquiver
of $\widehat{Q_{{A}}}$ that contains exactly one vertex from each
$\nu$-orbit of vertices of $Q_{\widehat{A}}$.

The \textbf{negative reflection} $S_x^-(A)$ of $A$ is the algebra $\operatorname{End}_{\widehat{A}}
\left(\bigoplus\limits_{x[z]\in S} e_{x[z]}\widehat{A}\right)$, where $\{e_{x[z]}\}_{x[z]\in S}$ is a set of primitive orthogonal idempotents of $\widehat{A}$.

A \textbf{negative reflection sequence} of $Q_A$ is a sequence of vertices $x_1, \dots, x_r$ of $Q_A$ such that $x_j$ is a source of $\sigma^-_{x_{j-1}}\dots\sigma^-_{x_1}Q_A$ for $1\leq j\leq r$. 

Similarly, given a sink $x$ of $Q_A$, a \emph{positive reflection} of an algebra is defined, considering $\sigma^+_xQ_A$ as the full-subquiver of $\widehat{Q}_A$ with vertex set  \[S=\{y[0]\in (\widehat{Q_{{A}}})_0\mid y\in (Q_A)_0,y\neq x\}\cup \{x[-1]\}.\]

\begin{rem}
    The quiver of the negative (positive) reflection $S^{-}_x(A)$ (resp. $S^{+}_x(A)$) coincides with the quiver $\sigma^-_xQ_A$ (resp. $\sigma^+_xQ_A$).
\end{rem}

\section{Trivial extension of a skew-gentle algebra} \label{Sec:trivial extensions of skew}

In this section, we describe the quiver and relations for the trivial extension of the admissible presentation of a skew-gentle algebra. To do this, we introduce a construction of quivers and relations that depends on a set of vertices and cycles from a given quiver. By abuse of notation and convenience, we call these vertices \emph{special}, and we denote them by $\Qsp$ as in the previous section. This construction is slightly different to the one presented in \cite{G99}, and revisited in \cite{AB,LSV}, implemented to define the admissible presentation of any skew-gentle algebra.

\begin{defn}\label{definition-admissible quiver} (sg-quiver $Q^{\mathrm{sg}}$) Let $Q$ be a quiver with a set of vertices $Q_0^{\times}$ (called \emph{special vertices}), such that there is no loop incident at any special vertex. We define the sg\emph{-quiver} $Q^{\mathrm{sg}}$ as follows.  We start by duplicating the special vertices in $Q$, that is, for each vertex $x$ in $Q_0$ we define
\[Q^{\text{sg}}_0(x) = \left\{ 
\begin{array}{ll}
\{x^+, x^-\} & \mbox{if $x \in \Qsp$} \\
\{ x \} & \mbox{otherwise,} \\
\end{array}\right.
\]
obtaining the set of vertices $Q^{\text{sg}}_0:= \bigcup_{x\in Q_0} Q^{\text{sg}}_0(x)$. For each arrow $\alpha$ in $Q_1$ we define the set of arrows
\[Q^{\text{sg}}_1(\alpha):=\{(x,\alpha, y) \mid x \in Q_0^{\text{sg}}(s(\alpha)), y \in Q_0^{\text{sg}}(t(\alpha))\}\]
obtaining the set of arrows $Q_1^{\text{sg}}:= \bigcup_{\alpha\in Q_1}Q_{1}^{\text{sg}}(\alpha)$.
\end{defn}

In other words, while arrows incident to  non-special vertices remain the same, for an arrow in $Q$ of the form $x \xrightarrow{\alpha} y$, if one of the vertices, say $y$, belongs to $\Qsp$ then we will have two arrows in $Q^{\text{sg}}$. If both vertices $x$ and $y$ are special there will be four arrows in $Q^{\text{sg}}$. See these cases below. 

$$\xymatrix{x\ar[rr]^{(x,\alpha, y^+)} \ar[rrd]^{(x, \alpha, y^-)} && y^+ \\ && y^-} \hspace{30pt} \xymatrix{x^+ \ar[rrr]^{(x^+,\alpha,y^+)} \ar@/^0.55pc/[rrrd]^{(x^+, \alpha, y^-)} &&& y^+ 
  \\ x^-\ar@/_0.9pc/[rrru]^{(x^-,\alpha, y^+)}\ar[rrr]_{(x^-,\alpha, y^-)} &&& y^- }.$$
To shorten notation, we use upper scripts for new arrows in $Q^{\text{sg}}$, for instance, $( x, \alpha, y^+ )$ can be denoted $ x \xrightarrow{\alpha^+} y^+ $, and $( x^-, \alpha, y^+ )$ can be denoted $ x^- \xrightarrow{^-\alpha^+} y^+ $, see Example~\ref{ex: toy example}.  In our proofs we denote arrows after special vertex duplication by $ \ ^{\varepsilon}\alpha^{\varepsilon'}$, where the indices $\varepsilon, \varepsilon'$ will be always considered in $\{ +,-, \emptyset \}$.

The next definition generalizes the construction of the admissible ideal of a skew-gentle algebra.

\begin{defn}\label{dfn:sg-ideal} (sg-ideal $I^{\textrm{sg}}$)
Let $A=KQ/I$ be a finite dimensional algebra with $\Qsp$ a set of  special vertices, as in Definition~\ref{definition-admissible quiver}, and $\mathcal{C}$ be a (possibly empty) set of cycles of length at least two in $Q$, called \emph{special cycles}, such that:

\begin{itemize}
    \item The class $\mathcal{C}$ is closed under cyclic permutations.
    \item There is at most one special cycle starting at each vertex in $\Qsp$.
    \item The ideal $I$ is generated by the union of the following (possible empty) sets:
    \begin{itemize}
        \item A set of paths of length at least two that contains no paths of the form $\alpha\beta$ with $x=t(\alpha)=s(\beta) \in \Qsp$.
    \item All commutative relations of the form $C_1-C_2$, where \( C_1 \) and \( C_2 \) are distinct cycles of $\mathcal{C}$ starting at the same non-special vertex.
    \end{itemize}
\end{itemize}

Let $Q^{\text{sg}}$ be the sg-quiver of $Q$. We define the sg\emph{-ideal $I^{\mathrm{sg}}$} of $KQ^{\text{sg}}$ as the ideal generated by the union of the following sets:

\begin{itemize}
    \item[\textbf{Type a:}] Commutative relations $({}^\varepsilon \alpha ^{+})({}^+\beta ^{\varepsilon '})- ({}^\varepsilon \alpha ^{-})({}^-\beta ^{\varepsilon '})$ such that $\alpha\beta$ is a path of length 2 in $Q$ with $t(\alpha) \in$ $\Qsp$, for all possible $\varepsilon, \varepsilon'\in \{+,-, \emptyset\}$.
    \item[\textbf{Type b:}]
     Commutative
    relations  $(\alpha_1^{\varepsilon_2})({}^{\varepsilon_2}\alpha_2^{\varepsilon_3}) \dots ({}^{\varepsilon_k}\alpha_k)-(\beta_1^{\varepsilon'_2})({}^{\varepsilon'_2}\beta_2^{\varepsilon'_3}) \dots ({}^{\varepsilon'_l}\beta_l)$, for each difference of two  different special cycles $C_1-C_2=\alpha_1\dots\alpha_{k}-\beta_1\dots\beta_{l}$ starting at the same  non-special vertex,  for all possible $\varepsilon_i,\,\varepsilon'_j=\{+,-, \emptyset\}$ with $i=1, \dots, k$ and $j=1, \dots, l$. 
    \item[\textbf{Type c}:] Monomial relations $({}^{\varepsilon_1}\alpha_1^{\varepsilon_2})({}^{\varepsilon_2}\alpha_2^{\varepsilon_3}) \dots ({}^{\varepsilon_k}\alpha_k^{\varepsilon_{k+1}})$ where $\alpha_1\dots\alpha_k$ is a monomial relation of $I$, for all possible $\varepsilon_i=\{+,-, \emptyset\}$ and $i=1, \dots, k+1$.
    \item[\textbf{Type d:}]
    A monomial relation $({}^{\varepsilon_1}\alpha_1^{\varepsilon_2})({}^{\varepsilon_2}\alpha_2^{\varepsilon_3}) \dots ({}^{\varepsilon_k}\alpha_k^{\varepsilon_{k+1}})$, for each special cycle $C=\alpha_1\dots\alpha_{k}$ starting at a special vertex, such that $\varepsilon_1\neq \varepsilon_{k+1}$, for all possible $\varepsilon_i=\{+,-, \emptyset\}$ and $i=1, \dots, k+1$.
\end{itemize}

We say that $A^{\text{sg}}=KQ^{\text{sg}}/I^{\text{sg}}$ is the sg\emph{-bound quiver algebra} of $A$.
\end{defn}

To show that the previous definitions generalize the construction of the admissible presentation of a skew-gentle algebra, we need to introduce the following definition.

\begin{defn}\label{def-auxiliary algebra} Let $A=\SG$ be skew-gentle algebra, with $\Qsp$ the set of special vertices in $Q$, and let $\rho$ be the set of paths of length two that generates $I$. The  \emph{auxiliary gentle algebra} $A'$ is the algebra $KQ'/ I'$ where $Q'=Q\setminus \text{Sp}$ and $I'= \langle \rho \setminus \{\alpha\beta\in I: x=t(\alpha)=s(\beta) \in \Qsp \}\rangle $.  
We say that $Q'$ is the \emph{auxiliary quiver} of $A$ and $I'$ is the \emph{auxiliary ideal} of $A$. 
\end{defn}

\begin{rem}\label{rmk:change of quivers}
With the above notation, the admissible presentation of a skew-gentle algebra $A=\SG$, coincides with the sg-bound quiver algebra $(A')^{\text{sg}}$ of the auxiliary gentle algebra of $A$. In this case, when we apply  Definition~\ref{definition-admissible quiver} and Definition~\ref{dfn:sg-ideal}, we consider the set of special cycles to be empty. To clarify the construction, please see Example~\ref{ex: toy example}. In this context, by abuse of notation, we denote $(Q')^{\text{sg}}$ by $Q^{\text{sg}}$, $(I')^{\text{sg}}$ by $I^{\text{sg}}$, and we say that $A^{\text{sg}} = K Q^{\text{sg}}/I^{\text{sg}}$ is the admissible presentation for the skew-gentle algebra $A$.
\end{rem}

\begin{exmp}\label{ex: toy example}  
Let $A=\SG$ be the skew-gentle algebra with quiver $Q$: 
\[\begin{tikzcd}
 &  &  & 5\arrow[ld,"\lambda", swap] &  \\
1 \arrow[r,swap,"\alpha"] \arrow[out=60,in=120,loop  , distance=1.6em,swap, "f_1"] & 2 \arrow[r,swap, "\beta"]\arrow[out=60,in=120,loop  , distance=1.6em, swap, "f_2"] & 3 \arrow[rr,swap, "\gamma"] &  & 4, \arrow[lu, "\delta",swap]
\end{tikzcd}\]
ideal $I = \langle \alpha \beta, \gamma \delta, \lambda \gamma \rangle$ and $\mathrm{Sp}=\{f_1, f_2\}$ the set of special loops. Following Remark~\ref{rmk:change of quivers} to compute the quiver of the admissible presentation of $A$, we consider $A'=KQ'/I$ be the auxiliary gentle algebra, where $Q'=Q\setminus\{f_1, f_2\}$, $I'= \langle \gamma \delta, \lambda \gamma \rangle$,  $\Qsp=\{1,2\}$, and the set of special cycles $\mathcal{C}$ is the empty set.

Following the notation, the sg-bound quiver algebra $(A')^{\text{sg}}$ of $A'=KQ'/I'$ is the algebra $KQ^{\text{sg}}/I^{\text{sg}}$, where $Q^{\text{sg}}$ is the quiver

\begin{figure}[h!]
   \adjustbox{scale=1,center}
{\begin{tikzcd}
1^+ \arrow[rr, "^{+}\alpha^+"] \arrow[rrd, "^{+}\alpha^-" near start]   &  & 2^+ \arrow[rd, "^{+}\beta"] &                        & 5 \arrow[ld, "\lambda"'] &                         \\
1^- \arrow[rr, "^{-}\alpha^-"'] \arrow[rru, "^{-}\alpha^+"' near start] &  & 2^- \arrow[r, "^{-}\beta"]  & 3 \arrow[rr, "\gamma"] &                          & 4 \arrow[lu, "\delta"']
\end{tikzcd}}
\end{figure}
and $I^{\text{sg}}=\langle (^+\alpha^+)(^+\beta)-(^+\alpha^-)(^-\beta), (^-\alpha^+)(^+\beta)- (^+\alpha^-)(^-\beta), \gamma \delta, \lambda \gamma \rangle$, which coincides with the admissible presentation of $A$ as defined in \cite{G99} and denoted by the authors as $A^{\text{sg}}$.
\end{exmp}

\begin{rem}\label{rem:comm relations}
    Let $p=\alpha_1 \dots \alpha_r$ be a non-zero  path from $x$ to $y$ in a bound quiver algebra $KQ/I$. Then, $p$ induces 
 a set of non-zero paths $({}^{\varepsilon_1}\alpha_1^{\varepsilon_2}) \cdots ({}^{\varepsilon_r}\alpha_r^{\varepsilon_{r+1}})$  in the sg-bound quiver algebra $KQ^{\text{sg}}/I^{\text{sg}}$, for all possible $\varepsilon_i=\{+, -,\emptyset\}$, with $i=1,\dots,r+1$. 
 Notice that by the commutativity relations of Type (a) of the sg-ideal $I^{\text{sg}}$  given in Definition \ref{dfn:sg-ideal}, any two paths $({}^{\varepsilon_1}\alpha_1^{\varepsilon'_2}) \cdots ({}^{\varepsilon'_r}\alpha_r^{\varepsilon_{r+1}})$ and $({}^{\varepsilon_1}\alpha_1^{\varepsilon_2}) \cdots ({}^{\varepsilon_r}\alpha_r^{\varepsilon_{r+1}})$ from $x^{\varepsilon_1}$ to $y^{\varepsilon_{r+1}}$ are equivalent modulo $I^{\text{sg}}$. Then, we denote the unique representative path
in $KQ^{\text{sg}}/I^{\text{sg}}$ by ${}^{\varepsilon_1} p ^{\varepsilon_{r+1}}$. 
\end{rem}

In order to  compute the quiver of the trivial extension of a skew-gentle algebra $A=\SG$, we need to consider a basis of the socle of the admissible presentation $A^{\text{sg}}$ of $A$ (see Section \ref{sec-trivial-extensions}). 
Recall that a path $p$ (in an admissible presentation $A=KQ/I$) is called \emph{maximal}, if $p\alpha=0$ and $\alpha p=0$ in $A$ for any arrow $\alpha$ in $Q$.

\begin{lem}\label{lem: paths Asg-A'}
    Let $\SG$ be a skew-gentle algebra and let 
    $A'=KQ'/I'$ be its auxiliary gentle algebra. Consider $A^{\mathrm{sg}}=KQ^{\mathrm{sg}}/I^{\mathrm{sg}}$ to be the admissible presentation of $A$.   
    The following statements hold.
    \begin{enumerate}
        \item[(a)] A non-zero path in $A'$ induces at most four linearly independent non-zero paths in $A^{\mathrm{sg}}$.
        Conversely, any non-zero path in $A^{\mathrm{sg}}$ is induced by a path in $A'$.
        \item[(b)] A path in $A'$ is maximal if and only if any induced path in $A^{\mathrm{sg}}$ is a maximal path.
    \end{enumerate}
\end{lem}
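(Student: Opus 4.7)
The argument rests on the explicit description of the generating relations of $I^{sg}$ (Definition~\ref{dfn:sg-ideal}): since the skew-gentle case corresponds to an empty set of distinguished cycles (Remark~\ref{rmk:change of quivers}), only the Type~a commutative relations and the Type~c monomial relations contribute to $I^{sg}$, so I only need to control these two families.

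For part (a), the plan is to first observe that any path in $A^{sg}$ is of the form $({}^{\varepsilon_1}\alpha_1^{\varepsilon_2})({}^{\varepsilon_2}\alpha_2^{\varepsilon_3}) \cdots ({}^{\varepsilon_r}\alpha_r^{\varepsilon_{r+1}})$ where $p = \alpha_1 \cdots \alpha_r$ is a path in $Q'$ and the signs $\varepsilon_i$ are forced to be $\emptyset$ whenever the corresponding vertex is not special. At an internal index $i$ where $t(\alpha_{i-1})=s(\alpha_i)\in Sp$, a Type~a relation is exactly what identifies the $+$ and $-$ choices of $\varepsilon_i$ modulo $I^{sg}$: by Remark~\ref{rem:comm relations}, the equivalence class of the induced path depends only on $(\varepsilon_1,\varepsilon_{r+1})$. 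This already caps the number of representatives by four (two choices at each endpoint when the endpoint is special, one otherwise). Non-zeroness of the induced paths follows because if one of them lay in $I^{sg}$ it would have to contain a Type~c monomial subrelation $({}^{\delta_1}\beta_1^{\delta_2})\cdots({}^{\delta_k}\beta_k^{\delta_{k+1}})$, whose underlying path $\beta_1\cdots\beta_k$ would be a monomial relation in $I'$ and hence a subpath of $p$, contradicting the hypothesis that $p$ is non-zero in $A'$. Linear independence is immediate since distinct choices of $(\varepsilon_1,\varepsilon_{r+1})$ land in distinct idempotent components $e_{s(p)^{\varepsilon_1}} KQ^{sg} e_{t(p)^{\varepsilon_{r+1}}}$. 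For the converse, given a non-zero path $q = ({}^{\varepsilon_1}\alpha_1^{\varepsilon_2}) \cdots ({}^{\varepsilon_r}\alpha_r^{\varepsilon_{r+1}})$ in $A^{sg}$, the underlying path $p = \alpha_1 \cdots \alpha_r$ in $Q'$ cannot contain a relation of $I'$ (otherwise $q$ would contain a Type~c generator of $I^{sg}$), so $p$ is non-zero in $A'$ and induces $q$.

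For part (b), the forward direction takes a maximal $p$ in $A'$ and an induced path ${}^{\varepsilon_1}p^{\varepsilon_{r+1}}$; for any arrow ${}^{\varepsilon_{r+1}}\gamma^{\varepsilon'}$ out of $t(p)^{\varepsilon_{r+1}}$ in $Q^{sg}$, the underlying arrow $\gamma$ lies in $Q'$ and by maximality $p\gamma \in I'$. Since $A'$ is (locally) gentle, $I'$ is generated by paths of length two, so some length-two subpath of $p\gamma$ is a relation in $I'$; as $p$ itself is non-zero, this length-two relation must be $\alpha_r\gamma$. The Type~c generator associated to $\alpha_r\gamma$ then shows that every extension of ${}^{\varepsilon_1}p^{\varepsilon_{r+1}}$ to the right by an arrow in $Q^{sg}$ lies in $I^{sg}$, and symmetrically on the left. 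The converse is contrapositive: if $p$ admits a right extension $p\gamma$ that is non-zero in $A'$, then part (a) supplies non-zero induced paths of $p\gamma$ in $A^{sg}$, each of which is an extension of some induced path of $p$ by an arrow coming from $\gamma$, so that induced path of $p$ is not maximal.

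The only delicate bookkeeping point, and the step I expect to require the most care, is the case analysis around special vertices: I need to verify that whenever $s(p)$ or $t(p)$ is special, each of the (up to) two sign choices at that endpoint actually yields a well-defined arrow in $Q^{sg}$ available for extension, and that the Type~a identifications do not spuriously collapse a right-extension with a left-extension. Once this is checked using the constraint from Definition~\ref{defn:skew-gentle}(4) that a special vertex has at most one incoming and one outgoing arrow in $Q'$ (with the corresponding length-two product lying in $I'$), the counting and the maximality argument both go through uniformly.
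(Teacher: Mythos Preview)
Your proposal is correct and follows essentially the same route as the paper: both arguments use the Type~a relations (equivalently, Remark~\ref{rem:comm relations}) to collapse the internal sign choices so that only the endpoint signs $(\varepsilon_1,\varepsilon_{r+1})$ survive, giving at most four classes, and both handle part~(b) by matching extensions of an induced path in $A^{sg}$ with extensions of $p$ in $A'$. Your write-up is somewhat more explicit than the paper's---you spell out the non-zeroness via Type~c subrelations and the linear independence via the idempotent decomposition, and you argue (b) directly rather than by contradiction---but these are expository differences rather than a different strategy.
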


\begin{proof}
 Let $A=\SG$ be a skew-gentle algebra with $\Qsp$ the set of special vertices. We consider $A^{\text{sg}}$ the admissible presentation of $A$ and $A'$ its auxiliary gentle algebra.

 $(a)$ Let $p=\alpha_1 \dots \alpha_r$ be a non-zero  path  in $A'$. 
 Following Remark \ref{rem:comm relations}, $p$ induces
 set of paths ${}^{\varepsilon} p ^{\varepsilon'}$
 { for all possible $\varepsilon,\varepsilon'=\{+, -,\emptyset\}$}. In particular, a path $p$ in $A'$ induces exactly:

\begin{itemize}
\item four different equivalence classes of  paths in $A^{\text{sg}}$ if and only if $s(p)$ and $t(p)$ are special vertices, { each one with different starting and ending vertex}; 
\item two different equivalence classes of  paths in $A^{\text{sg}}$ if and only if one and only one of the vertices $s(p)$ or $t(p)$ is a special vertex, coinciding exclusively either at the starting or a the ending vertex;
\item exactly one equivalence class of  paths in $A^{\text{sg}}$ if and only if none of the vertices $s(p)$ and $t(p)$ are special.
\end{itemize}

It is also clear that any non-zero path $({}^{\varepsilon_1}\alpha_1^{\varepsilon_2}) \cdots ({}^{\varepsilon_r}\alpha_r^{\varepsilon_{r+1}})$ in $A^{\text{sg}}$ is induced by the non-zero path $p=\alpha_1 \dots \alpha_r$ in $A'$.  
 \vspace{.1in}
    
$(b)$ Let $p=\alpha_1 \dots \alpha_r$ be a maximal path in $A'$, and let  ${}^{\varepsilon} p ^{\varepsilon'}$ be an induced path in $A^{\text{sg}}$, with $\varepsilon,\varepsilon'\in \{+,-,\emptyset\}$. Suppose that ${}^{\varepsilon} p ^{\varepsilon'}$ is not maximal in $A^{\text{sg}},$ then there exist an arrow ${}^{\varepsilon_1} \beta^{\varepsilon}$ (or ${}^{\varepsilon'} \gamma^{\varepsilon_2}$) such that the path $({}^{\varepsilon_1} \beta^{\varepsilon})({}^{\varepsilon} p ^{\varepsilon'})\neq 0$ (or $({}^{\varepsilon} p ^{\varepsilon'})({}^{\varepsilon'} \gamma^{\varepsilon_2})\neq 0$, respectively). Therefore, such a path in $A^{\text{sg}}$ is induced by a non-zero path $\beta p$ (or $p\gamma$, respectively) in $A'$, contradicting the fact that $p$ is a maximal path. 
With similar argument, we claim that any maximal path ${}^{\varepsilon} p ^{\varepsilon'}$ in $A^{\text{sg}}$ with all possible $\varepsilon,\varepsilon'\in \{+,-,\emptyset\}$ is induced by a maximal path $p$ in $A'.$\end{proof}

Recall that by \cite[Section 2.1]{Sch15}, when $A$ is gentle the set of maximal paths forms a basis for $\operatorname{soc}_{A^e}(A)$.

\begin{lem}\label{lemma:basis of soc A}
Let $A=\SG$ be a skew-gentle algebra and $A^{\mathrm{sg}}=KQ^{\mathrm{sg}}/I^{\mathrm{sg}}$ be its admissible presentation. Then the set of maximal paths $\mathcal{M}$ in $A^{\mathrm{sg}}$ forms a basis of $\operatorname{soc}_{(A^{\mathrm{sg}})^e}(A^{\mathrm{sg}})$.
\end{lem}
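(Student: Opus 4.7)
The plan is to establish three properties of $\mathcal{M}$ as a subset of $\operatorname{soc}_{(A^{sg})^e}(A^{sg})$: inclusion, linear independence, and spanning. The bridge to the gentle setting will be Lemma~\ref{lem: paths Asg-A'}, together with Remark~\ref{rem:comm relations}, which singles out a canonical basis $\bigl\{{}^{\varepsilon}p^{\varepsilon'}\bigr\}$ of $A^{sg}$ indexed by non-zero paths $p$ of the auxiliary gentle algebra $A'$ and admissible endpoint signs $(\varepsilon,\varepsilon')$. The gentle case, i.e. that the maximal paths of $A'$ are a basis of $\operatorname{soc}_{(A')^e}(A')$, is recorded in \cite{Sch15} and will serve as a reference point.

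Inclusion follows at once from the identification $\operatorname{soc}_{(A^{sg})^e}(A^{sg}) = \{a \in A^{sg} \mid J^{sg}\, a = 0 = a\, J^{sg}\}$ together with the fact that $J^{sg}$ is generated by arrows: any ${}^{\varepsilon}p^{\varepsilon'} \in \mathcal{M}$ is, by the very definition of maximality, annihilated on both sides by every arrow of $Q^{sg}$. Linear independence is equally immediate, since the elements of $\mathcal{M}$ are distinct members of the canonical basis of $A^{sg}$.

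The main step is spanning. Given $a \in \operatorname{soc}_{(A^{sg})^e}(A^{sg})$, I would expand $a$ in the canonical basis and argue by contradiction that each summand with non-zero coefficient has maximal underlying path. Suppose $c\cdot {}^{\varepsilon}p^{\varepsilon'}$ appears in $a$ with $c \neq 0$ but $p$ is not maximal in $A'$; by Lemma~\ref{lem: paths Asg-A'}(b), ${}^{\varepsilon}p^{\varepsilon'}$ is not maximal in $A^{sg}$ either, so some arrow $\gamma$ of $Q^{sg}$ extends it (on the right, say) to a longer non-zero basis element ${}^{\varepsilon}(p\beta)^{\varepsilon''}$. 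Expanding the socle equation $a\gamma = 0$ in the canonical basis and isolating the coefficient of ${}^{\varepsilon}(p\beta)^{\varepsilon''}$ will force $c = 0$. Iterating this extension to a maximal one (which terminates since $A^{sg}$ is finite dimensional) handles the general case, mirroring the argument for $A'$ from \cite{Sch15}.

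The delicate point, and the one I expect to be the main obstacle, is showing that no other summand of $a\gamma$ can interfere with that coefficient. The only identifications among basis vectors in $A^{sg}$ come from the Type~(a) commutativity relations, which respect endpoints. Hence, if ${}^{\delta}q^{\delta'}\cdot \gamma$ equals ${}^{\varepsilon}(p\beta)^{\varepsilon''}$ in $A^{sg}$, the underlying equality $q\beta = p\beta$ in the gentle algebra $A'$ forces $q = p$ (since non-zero paths form a basis of $A'$), and matching endpoint signs then forces $(\delta,\delta') = (\varepsilon,\varepsilon')$. With this uniqueness in hand, the contradiction is clean and the spanning property follows.
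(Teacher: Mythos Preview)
Your argument is correct and takes a genuinely different route from the paper's. The paper decomposes $R=\soc_{(A^{sg})^e}(A^{sg})$ as $\bigoplus_{x,y} e_x R e_y$ and analyses each piece separately by pulling back to the auxiliary gentle algebra $A'$: it invokes \cite[Lemma~3]{BMM03} to bound the number of linearly independent paths between any two vertices of $A'$ by two, and then runs a case analysis on the arrows entering $x$ and leaving $y$ to decide, configuration by configuration, which of these (at most two) paths are maximal and hence span $e_x R e_y$. Your approach bypasses both the external lemma and the case analysis by working directly with the global canonical basis $\{\,{}^{\varepsilon}p^{\varepsilon'}\,\}$ of $A^{sg}$ and a single coefficient-extraction step; the key cancellation property you isolate (that $q\beta=p\beta\neq 0$ in the monomial algebra $A'$ forces $q=p$) is exactly what makes this work. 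Your route is more self-contained and arguably cleaner, while the paper's route yields slightly finer local information about each $e_xRe_y$. One minor remark: the phrase ``iterating this extension to a maximal one'' is unnecessary---a single extension by $\gamma$ already forces $c=0$, since your uniqueness argument shows that ${}^{\varepsilon}p^{\varepsilon'}$ is the unique basis term whose product with $\gamma$ equals ${}^{\varepsilon}(p\beta)^{\varepsilon''}$.
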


\begin{proof}
Let $A'$ be the auxiliary gentle algebra of $A$. Denote $\operatorname{soc}_{(A^{\text{sg}})^e}(A^{\text{sg}})$ by $R$. As a $K$-vector space, $R$ 
is the direct sum \[R=\bigoplus_{x,y\in {Q^{\text{sg}}_0}}e_xRe_y  \]
where $e_x$ and $e_y$ are the trivial paths at a vertex $x$ and $y$ in $Q^{\text{sg}}_0$. 

Let $x$ and $y$ be two vertices in $Q^{\text{sg}}$, and let $x'$ and $y'$ be their associated vertices in $A'$. Since $A'$ is gentle, there are at most two linearly independent classes of paths $p'=\alpha_1\dots \alpha_n$ and $q'=\beta_1\dots \beta_m$ from $x'$ to $y'$ (see \cite[Lemma 3]{BMM03}). By Lemma~\ref{lem: paths Asg-A'} and Remark~\ref{rem:comm relations}, there are at most two linearly independent classes of paths $p$ and $q$ from $x$ to $y$.

Note that there are at most four arrows ending at $x$ and at most four arrows starting at $y$, which correspond to arrows  ending at $x'$ and arrows starting at $y'$. We will show that the paths $p$ and $q$ induce a basis for $e_x Re_y$. We need to analyse each case based on the number of arrows ending at $x$ and starting at $y$, respectively, and if there is exactly one independent class of paths from $x$ and $y$ or two, but we will describe two of them.

Suppose there are two independent classes of paths from $x$ to $y$ and suppose that there is exactly four arrows ending at $x$ and four arrows staring at $y$, then there are exactly two arrows $\gamma_1$ and $\gamma_2$ ending at $x'$ and two arrows  $\delta_1$ and $\delta_2$ staring at $y'$, and $x'$ and $y'$ are not special vertices of $A'$.

Since $A'$ is gentle, either $\gamma_1\alpha_1\notin I$ or $\gamma_2\alpha_1\notin I$, and either $\gamma_1\beta_1\notin I$ or $\gamma_2\beta_1\notin I$. Therefore, $p'$ and $q'$ are not maximal in $A'$, and by Lemma~\ref{lem: paths Asg-A'}, neither are $p$ and $q$ in $A^{\text{sg}}$, and as a consequence $e_xRe_y=\{0\}$.

Suppose now that there are exactly two arrows $\gamma_1$ and $\gamma_2$ ending at $x$ and no arrows starting at $y$. In this case, we need to consider two cases, $x'$ is a special vertex or not. In the first case, we have that $\gamma_1$ and $\gamma_2$ come from an arrow $\gamma$ in $A'$, and there is exactly one independent class of path $p'$ from $x'$ to $y'$, since there is exactly one arrow staring at $x'$. Hence $\gamma \alpha_1\notin I$, and $p'$ is not maximal in $A'$, neither $p$ in $A^{\text{sg}}$, then $e_xRe_y=\{0\}$.

If $x'$ is not a special vertex, we have that either $\gamma_1$ and $\gamma_2$ are arrows of $A'$, or they come from an arrow $\gamma$ in $A'$. In the first case, by definition of gentle and a similar analysis as in the first case, the path or paths from $x'$ to $y'$ are not maximal, and neither the associated path or paths in $A^{\text{sg}}$.

In the second case, if $p\neq q$, then either $\gamma\alpha_1\in I$ or $\gamma\beta_1\in I$; hence, $p$ is maximal or $q$ is maximal respectively. As a consequence $\{p\}$ or $\{q\}$ is a basis of $e_xRe_y$.

Finally, if there is exactly one independent class of paths, namely $p$, then either $\gamma\alpha_1\in I$ or $\gamma\alpha_1\notin I$. In the first situation, $p$ is maximal and is a basis of  $e_xRe_y$, while in the second case, $p$ is not maximal and therefore $e_xRe_y=\{0\}$.\end{proof}

\begin{lem}\label{lemma:quiver of trivial extension}
Let $A=\SG$ be a skew-gentle algebra,  $A^{\mathrm{sg}}=KQ^{\mathrm{sg}}/I^{\mathrm{sg}}$ its admissible presentation and $A'=KQ'/I'$ its auxiliary gentle algebra. Then the quiver $Q_{T(A^{\mathrm{sg}})}$ of the trivial extension of $A^{\mathrm{sg}}$ coincides with the $\mathrm{sg}$-quiver $Q^{\mathrm{sg}}_{T(A')}$ of the trivial extension of $A'$ with special vertices $\Qsp$.
\end{lem}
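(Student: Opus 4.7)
The plan is to verify the equality of the two quivers by comparing them layer by layer: first the vertex sets, then the arrows inherited from the algebras, and finally the new connecting arrows contributed by the trivial extension.

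For vertices, I would simply unwind the definitions: the vertex set of $Q_{T(A^{sg})}$ is $Q^{sg}_0$ by the construction in Section~\ref{sec-trivial-extensions}, whereas the vertex set of $Q_{T(A')}$ is $Q'_0 = Q_0$, and applying the sg-construction of Definition~\ref{definition-admissible quiver} with distinguished vertices $Sp$ produces exactly $Q^{sg}_0$. For the arrows inherited from the algebras (as opposed to connecting arrows), I would invoke Remark~\ref{rmk:change of quivers}: the arrows of $Q^{sg}$ are by definition the sg-duplicates of the arrows of $Q'$, so these two families of arrows match automatically.

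The interesting part is the connecting arrows. By Lemma~\ref{lemma:basis of soc A}, the set of maximal paths $\mathcal{M}$ in $A^{sg}$ is a basis of $\operatorname{soc}_{(A^{sg})^e}(A^{sg})$, so the connecting arrows of $Q_{T(A^{sg})}$ are in bijection with $\mathcal{M}$: each $p\in\mathcal{M}$ gives an arrow $\beta_p$ from $t(p)$ to $s(p)$. On the other hand, since $A'$ is gentle, the basis of its bimodule socle is given by the maximal paths of $A'$ (\cite[Section~2.1]{Sch15}), so the connecting arrows of $Q_{T(A')}$ are indexed by these maximal paths; each such $p'$ yields a connecting arrow $\beta_{p'}\colon t(p')\to s(p')$ in $Q_{T(A')}$, and applying the sg-construction produces one arrow from $t(p')^{\varepsilon'}$ to $s(p')^{\varepsilon}$ for every compatible pair $(\varepsilon,\varepsilon')\in\{+,-,\emptyset\}^2$ (the signs being forced to $\emptyset$ exactly at non-distinguished endpoints). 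By Lemma~\ref{lem: paths Asg-A'}, the maximal paths of $A^{sg}$ are precisely the representatives $^{\varepsilon}p'^{\varepsilon'}$ of the induced paths of maximal paths $p'$ of $A'$, for the same collection of compatible sign choices. This yields a bijection between the connecting arrows of $Q_{T(A^{sg})}$ and those of $Q^{sg}_{T(A')}$ that preserves source and target.

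The main obstacle is verifying that the correspondence between maximal paths and sign choices goes through cleanly in both directions: every maximal path in $A^{sg}$ must really be of the form $^{\varepsilon}p'^{\varepsilon'}$ with $p'$ maximal in $A'$ (not merely induced by some non-maximal path), and conversely every compatible sign choice applied to a maximal $p'$ must still produce a maximal path. Both directions are exactly the content of Lemma~\ref{lem: paths Asg-A'}(b), so the argument reduces to a careful matching of these combinatorial data.
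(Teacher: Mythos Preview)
Your proposal is correct and follows essentially the same approach as the paper: the paper's proof consists of a single sentence invoking Lemma~\ref{lemma:basis of soc A} and Lemma~\ref{lem: paths Asg-A'}(b), and your argument is precisely the unpacking of how those two lemmas combine to match vertices, old arrows, and connecting arrows. Your explicit verification that the number and endpoints of the connecting arrows on each side agree (via the sign-choice count from Lemma~\ref{lem: paths Asg-A'}(a)) is exactly the content the paper leaves implicit.
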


\begin{proof}
It follows by Lemma \ref{lemma:basis of soc A} and by Lemma \ref{lem: paths Asg-A'} (b).
\end{proof}

\begin{lem}\label{lem: elementary cycles}
    Let $A=\SG$ be a skew-gentle algebra and $A^{\mathrm{sg}}=KQ^{\mathrm{sg}}/I^{\mathrm{sg}}$ its admissible presentation and let 
    $A'=KQ'/I'$ be its auxiliary gentle algebra. We denote by $T(A^{\mathrm{sg}})$ and $T(A')$ the trivial extension of $A^{\mathrm{sg}}$ and $A'$, respectively. Then
   a cycle $C$ in $T(A')$ is elementary if and only if any induced cycle ${}^{\varepsilon}C^{\varepsilon}$ in $Q^{\mathrm{sg}}_{T(A')}$ is an elementary cycle in $T(A^{\mathrm{sg}})$, for all possible $\varepsilon\in \{+,-,\emptyset\}$.
\end{lem}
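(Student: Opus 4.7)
The plan is to prove both implications by reducing \emph{elementary cycle} to its two defining ingredients: the underlying path must be maximal (so that the corresponding connecting arrow lies in $\mathcal{M}$), and the connecting arrow in the cycle must be the one associated to that maximal path. Both ingredients transfer back and forth between $A'$ and $A^{sg}$ via the lemmas already established.

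First, I would unpack the notation. By Definition~\ref{def:elementary}, an elementary cycle in $T(A')$ has the form $C = p\,\beta_{p}$ with $p = \alpha_{1}\cdots \alpha_{l}$ a path in $A'$ such that $p^{\ast}(p)\neq 0$, which forces $p \in \mathcal{M}'$, the basis of $\operatorname{soc}_{(A')^{e}}(A')$ given by the maximal paths in $A'$. By Lemma~\ref{lemma:quiver of trivial extension}, $Q_{T(A^{sg})}$ is the $sg$-quiver of $Q_{T(A')}$, so each connecting arrow $\beta_{p}$ of $T(A')$ yields a family of connecting arrows $\beta_{{}^{\varepsilon}p^{\varepsilon'}}$ in $T(A^{sg})$, one for each admissible pair $\varepsilon,\varepsilon'\in\{+,-,\emptyset\}$ determined by whether $s(p), t(p)$ belong to $Sp$.

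For the forward implication, assume $C = p\,\beta_{p}$ is elementary in $T(A')$. By Lemma~\ref{lem: paths Asg-A'}(b), every induced path ${}^{\varepsilon}p^{\varepsilon'}$ in $A^{sg}$ is maximal, so by Lemma~\ref{lemma:basis of soc A} it belongs to the basis $\mathcal{M}$ of $\operatorname{soc}_{(A^{sg})^{e}}(A^{sg})$. An induced cycle ${}^{\varepsilon}C^{\varepsilon}$ is then of the form ${}^{\varepsilon}p^{\varepsilon'}\cdot \beta_{{}^{\varepsilon}p^{\varepsilon'}}$ (here $\varepsilon'$ is forced by the sign at $t(p)$ inside the duplicated path; by Remark~\ref{rem:comm relations} any other choice of internal signs produces the same class modulo $I^{sg}$). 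Evaluating the dual gives $\bigl({}^{\varepsilon}p^{\varepsilon'}\bigr)^{\ast}\bigl({}^{\varepsilon}p^{\varepsilon'}\bigr)=1\neq 0$, so ${}^{\varepsilon}C^{\varepsilon}$ is elementary in $T(A^{sg})$.

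Conversely, assume some induced cycle ${}^{\varepsilon}C^{\varepsilon}$ is elementary in $T(A^{sg})$. Then ${}^{\varepsilon}C^{\varepsilon} = q\,\beta_{q}$ with $q \in \mathcal{M}$, hence $q$ is a maximal path in $A^{sg}$; by Lemma~\ref{lem: paths Asg-A'}(a)-(b), $q$ is induced by a unique maximal path $p$ in $A'$, and by Lemma~\ref{lemma:quiver of trivial extension} the connecting arrow $\beta_q$ is one of the arrows produced by the sg-duplication of $\beta_{p}$ in $Q_{T(A')}$. Therefore $C = p\,\beta_{p}$ is a cycle in $T(A')$ with $p\in \mathcal{M}'$ and $p^{\ast}(p)\neq 0$, i.e.\ $C$ is elementary. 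I do not anticipate a serious obstacle: the only care needed is the bookkeeping of the duplicated signs $\varepsilon$ and the observation (via Remark~\ref{rem:comm relations}) that the intermediate signs along the induced cycle are irrelevant modulo $I^{sg}$, so the statement ``any induced cycle ${}^{\varepsilon}C^{\varepsilon}$'' is unambiguous once the starting sign $\varepsilon$ at the basepoint is fixed.
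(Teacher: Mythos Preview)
Your proposal is correct and follows essentially the same route as the paper: both directions reduce to transferring maximality of paths via Lemma~\ref{lem: paths Asg-A'}(b), membership in the socle basis via Lemma~\ref{lemma:basis of soc A}, and the identification of connecting arrows via Lemma~\ref{lemma:quiver of trivial extension}. The only cosmetic difference is that the paper writes an elementary cycle in the cyclically rotated form $\rho_{1}\beta_{\rho}\rho_{2}$ with $\rho=\rho_{2}\rho_{1}$ maximal, whereas you normalize to $p\,\beta_{p}$; your form is closer to Definition~\ref{def:elementary} and the argument is otherwise identical.
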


\begin{proof}
Let $T(A')$ be the trivial 
extension of $A'$, and let $C$ be an elementary cycle in $T(A')$. Then, $C=\rho_1\beta_{\rho}\rho_2$, where $\rho=\rho_2\rho_1$ is a maximal path in $A'$ and $\beta_{\rho}$ is the added arrow in $Q_{T(A')}$ with respect to the path $\rho$, see Section \ref{sec-trivial-extensions}. Following Lemma \ref{lem: paths Asg-A'}, the induced paths ${}^{\varepsilon} \rho ^{\varepsilon'}$, for all possible   $\varepsilon,\varepsilon'\in \{+,-,\emptyset\}$, are maximal paths in $A^{\text{sg}}$. By Lemma \ref{lemma:basis of soc A}  we can give a basis $\mathcal{M}$ of the socle of $A^{\text{sg}}$ such that ${}^{\varepsilon} \rho ^{\varepsilon'}\in \mathcal{M}$. 
Then, by  Definition \ref{def:elementary}  there exist arrows ${}^{\varepsilon'} \beta ^{\varepsilon}$ in $Q_{T(A^{\text{sg}})}$ such that $({}^{\varepsilon} \rho ^{\varepsilon'})({}^{\varepsilon'} \beta ^{\varepsilon})$ are elementary cycles in $T(A^{\text{sg}}).$ 
Notice that the arrows ${}^{\varepsilon'} \beta ^{\varepsilon}$ in $Q^{\text{sg}}_{T(A')}$, for all possible $\varepsilon,\varepsilon'\in \{+,-,\emptyset\}$, are induced by the arrow $\beta_{\rho}$ in $T(A')$. Therefore, the cycles ${}^{\varepsilon} C ^{\varepsilon}=({}^{\varepsilon} \rho ^{\varepsilon'})({}^{\varepsilon'} \beta ^{\varepsilon})$ in $T(A^{\text{sg}})$ are elementary cycles in $T(A^{\text{sg}})$ induced by $C$.

With similar arguments, one can show that any elementary cycle ${}^{\varepsilon} C ^{\varepsilon}$ in $T(A^{\text{sg}})$ is induced by an elementary cycle $C$ in $T(A')$.\end{proof}

\begin{thm}\label{teo:ideal relaciones}
 Let $A=\SG$ be a skew-gentle algebra, let $A^{\mathrm{sg}}$ be its admissible presentation, and $A'=KQ'/I'$ be its auxiliary gentle algebra. Then the trivial extension $T(A^{\mathrm{sg}})$ is isomorphic to the sg-bound quiver algebra of the trivial extension $T(A')$, with $\Qsp$ as the set of special vertices and the set of elementary cycles $\mathcal{C}$ of $T(A')$ as the set of special cycles.
\end{thm}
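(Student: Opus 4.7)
The plan is to construct an explicit isomorphism between $T(A^{sg})$ and $(T(A'))^{sg}$ by matching quivers and then generators of the defining ideals. The quiver identification is already handed to us by Lemma~\ref{lemma:quiver of trivial extension}: $Q_{T(A^{sg})} = Q^{sg}_{T(A')}$ when the sg-construction on $Q_{T(A')}$ uses $Sp$ as distinguished vertices. So on the level of arrows the candidate isomorphism sends each arrow of $T(A^{sg})$ to the arrow of $(T(A'))^{sg}$ produced by the sg-construction, and the task reduces to checking that the ideal generators of $I_{T(A^{sg})}$ described in Theorem~\ref{ideal TA} match the four families (a)--(d) in the sg-ideal of Definition~\ref{dfn:sg-ideal} applied to $(Q_{T(A')}, I_{T(A')}, Sp, \mathcal{C})$, with $\mathcal{C}$ the set of elementary cycles of $T(A')$.

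First I would check the hypotheses of Definition~\ref{dfn:sg-ideal} for the tuple $(Q_{T(A')}, I_{T(A')}, Sp, \mathcal{C})$: that there are no loops at special vertices of $T(A')$ (which follows from the absence of loops at special vertices of $A$ and the shape of the connecting arrows of $T(A')$), that at most one elementary cycle starts at each $x\in Sp$ (following from the gentle combinatorics of $A'$, which guarantees a unique maximal path through each arrow), and that $I_{T(A')}$ is generated by length-two monomials with middle vertex outside $Sp$ together with differences of elementary cycles at non-special vertices. The latter is obtained by refining the description in Theorem~\ref{ideal TA}: generators in item (1) are exactly the length-two relations of $I'$, item (2) contributes the longer monomial relations coming from paths that do not lie in any elementary cycle, and item (3) yields the weight-normalised differences $C_1-C_2$ of elementary cycles at a common vertex.

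Next I would match the four types of generators. For type (c), each monomial relation $\alpha_1\cdots\alpha_k$ in $I_{T(A')}$ lifts by Lemma~\ref{lem: paths Asg-A'} to all induced paths ${}^{\varepsilon_1}\alpha_1^{\varepsilon_2}\cdots{}^{\varepsilon_k}\alpha_k^{\varepsilon_{k+1}}$, and these lifts are precisely the type (c) generators; conversely, any monomial relation of $I_{T(A^{sg})}$ comes from such an underlying word. For type (a), the commutativity relations $({}^{\varepsilon}\alpha^+)({}^+\beta^{\varepsilon'})-({}^{\varepsilon}\alpha^-)({}^-\beta^{\varepsilon'})$ at each special middle vertex are forced by Lemma~\ref{lemma:basis of soc A}: the two paths evaluate identically on the chosen socle basis and hence are identified in $T(A^{sg})$. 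For type (b), given two elementary cycles $C_1$, $C_2$ of $T(A')$ at a non-special vertex, Lemma~\ref{lem: elementary cycles} shows they induce elementary cycles in $T(A^{sg})$ at the same vertex, and item (3) of Theorem~\ref{ideal TA} produces exactly their (weight-normalised) difference. For type (d), at a special vertex $x$ an elementary cycle $C$ of $T(A')$ induces four paths ${}^{\varepsilon}C^{\varepsilon'}$ in $Q^{sg}_{T(A')}$; by Lemma~\ref{lem: elementary cycles} only those with $\varepsilon=\varepsilon'$ remain elementary, while the mixed paths ${}^+C^-$ and ${}^-C^+$ are not contained in any elementary cycle of $T(A^{sg})$ and therefore lie in $I_{T(A^{sg})}$ by Theorem~\ref{ideal TA}(2), matching the type (d) monomials.

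The main obstacle will be verifying that no extra relations are needed on either side. On the $T(A^{sg})$ side, one has to argue that the linear combinations appearing in Theorem~\ref{ideal TA}(3) that involve elementary cycles through duplicated vertices are already consequences of the type (a) commutativity relations together with the type (b) and type (d) relations; I expect this to follow by choosing the socle basis of Lemma~\ref{lemma:basis of soc A} so that the dual-weight coefficients $\omega({}^\varepsilon C^\varepsilon)$ equal those of the underlying cycle $C$ in $T(A')$, reducing the difference $\omega(C')C-\omega(C)C'$ in $I'_x$ to the straightforward lift of the type (b) relations. On the $(T(A'))^{sg}$ side, one has to check that no type (b) relation is missing: since elementary cycles at $x^+$ and at $x^-$ are distinct cycles at distinct vertices of $(T(A'))^{sg}$, no commutativity between them is imposed, matching the behavior in $T(A^{sg})$. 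Once the two generating sets are shown to coincide modulo the admissible manipulations above, the identity on arrows extends to the desired algebra isomorphism.
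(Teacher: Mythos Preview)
Your proposal is correct and follows essentially the same route as the paper: identify the quivers via Lemma~\ref{lemma:quiver of trivial extension}, then match the four families (a)--(d) of the sg-ideal against the generators of $I_{T(A^{sg})}$ coming from Theorem~\ref{ideal TA}, using Lemma~\ref{lem: elementary cycles} to transfer elementary cycles back and forth. One refinement the paper makes that you gloss over: for the type~(a) commutativity relations, rather than appealing to socle evaluations, the paper splits into two cases according to whether both $\alpha,\beta$ lie in $Q'$ (in which case the relation is already a generator of $I_{A^{sg}}$ and hence of $I_{T(A^{sg})}$ via Theorem~\ref{ideal TA}(1)) or one of them is a new connecting arrow (in which case the two length-two paths share the same complement in an elementary cycle and the relation comes from Theorem~\ref{ideal TA}(3)); this case distinction is what actually produces the relation inside $I_{T(A^{sg})}$.
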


\begin{proof}
Let $A=\SG$ be a skew-gentle algebra and let $A^{\text{sg}}=KQ^{\text{sg}}/I^{\text{sg}}$ and $A'=KQ'/I'$ be the algebras as described in the statement. By Lemma~\ref{lemma:quiver of trivial extension}, the quivers $Q_{T(A^{\text{sg}})}$ and $Q_{T(A')}$ of the trivial extensions $T(A^{\text{sg}})$ and $T(A')$ coincide, then it is enough to verify that the ideal $I_{T(A^{\text{sg}})}$ of the trivial extension coincides with the sg-ideal $I^{\text{sg}}_{T(A')}$ of $T(A')$.

$\bullet$ Let $x$ be a vertex in $\Qsp$, and let $\alpha, \beta$ be arrows in $Q_{T({A'})}$ such that $\cdot \xrightarrow{\alpha}x\xrightarrow{\beta}\cdot$. The commutative relations $({}^\varepsilon \alpha ^{+})({}^+\beta ^{\varepsilon '})- ({}^\varepsilon \alpha ^{-})({}^-\beta ^{\varepsilon '})$ in  $I^{\text{sg}}_{T(A')}$ are of type (a). For the ideal $I_{T(A^{\text{sg}})}$ we have to consider two cases:

\begin{enumerate}

\item If $\alpha$ and $\beta$ are arrows of $Q'$, then 
such commutative relations
are generators of $I_{A^{\text{sg}}}$, and therefore also of $I_{T(A^{\text{sg}})}$.

 \item We assume that $\beta$ (or $\alpha$) is not an arrow of $Q'$. Then, the arrows ${}^{\varepsilon''}\beta^{\varepsilon'}$ in $Q_{T(A^{\text{sg}})}$ do not belong to $Q_{A^{\text{sg}}}$, with $\varepsilon''\in \{+,-\}$, and $\varepsilon'\in \{+,-,\emptyset\}$.
That it to say that the mentioned arrows are new arrows in the quiver $Q_{T(A^{\text{sg}})}$ which are in correspondence with  maximal paths $({}^{\varepsilon'}\rho^{\varepsilon})(^{\varepsilon}\alpha^{\varepsilon''})$ in $A^{\text{sg}}$, with  $\varepsilon\in \{+,-,\emptyset\}$. Therefore, the commutative relations $({}^\varepsilon \alpha ^{+})({}^+\beta ^{\varepsilon '})- ({}^\varepsilon \alpha ^{-})({}^-\beta ^{\varepsilon '})$ in  $I_{T(A^{\text{sg}})}$ arise from the third condition of \cite[Theorem A.1]{FSTTV22}, since the paths $({}^\varepsilon \alpha ^{+})({}^+\beta ^{\varepsilon '})$  and $({}^\varepsilon \alpha ^{-})({}^-\beta ^{\varepsilon '})$ have the same complement 
${}^{\varepsilon'}\rho^{\varepsilon}$ in an elementary cycle. 
\end{enumerate}

$\bullet$ Let $C_1$ and $C_2$ be two elementary cycles of $T(A')$  starting at the same 
non-special vertex. Then, by Lemma \ref{lem: elementary cycles}, we have that ${}^{\varepsilon}C_1^{\varepsilon}$ and ${}^{\varepsilon}C_2^{\varepsilon}$ are elementary cycles of $Q(A^{\text{sg}})$, with $\varepsilon=\emptyset$. 
Therefore, it follows from the third condition of \cite[Theorem A.1]{FSTTV22} that ${}^{\varepsilon}C_1^{\varepsilon}-{}^{\varepsilon}C_2^{\varepsilon}$ 
are generators of $I_{T(A^{\text{sg}})}$, and for the sg-ideal $I^{\text{sg}}_{T(A')}$ of $T(A')$, such relations arise from relations of Type (b) in Definition \ref{definition-admissible quiver}.

$\bullet$ Finally, monomial relations of $I_{T(A^{\text{sg}})}$ arise from monomial relations of $I^{\text{sg}}$ or from paths which  are not contained in any elementary cycle of $T(A^{\text{sg}})$. On the other hand,  monomial relations of the sg-ideal $I^{\text{sg}}_T(A')$ of $T(A')$ arise from relations of Type (c) and (d) of Definition \ref{definition-admissible quiver}. It follows from Lemma \ref{lem: elementary cycles} that these two mentioned sets are equal.\end{proof}

In the following example, we illustrate how to use Theorem \ref{teo:ideal relaciones} to compute the trivial extension of a skew-gentle algebra starting from its auxiliary gentle algebra.

\begin{exmp}\label{toy-example2}
Let $A$ be the skew-gentle algebra given in Example \ref{ex: toy example} and let $A'$ be its auxiliary gentle algebra. 

Following \cite[Lemma 2.10]{assem2006}, the ordinary quiver $Q_{T(A')}$ of $T(A')$ is depicted in Figure~\ref{fig:ordinary quiver}. Here $\beta_{p_1}$ and $\beta_{p_2}$ correspond to the new arrows in $Q_{T(A')}$ associated with the maximal paths $p_1=\alpha\beta\gamma$ and $p_2=\delta\lambda$ in $A'$, respectively. As a consequence, the set of elementary cycle (up to cycle equivalences) in $T(A')$ is given by $C_1=\alpha\beta\gamma\beta_{p_1}$ and $C_2=\delta\lambda\beta_{p_2}$.

\begin{figure}
    \centering
    \begin{tikzcd}
 &  &  & 5\arrow[ld,"\lambda", swap] &  \\
1 \arrow[r,swap,"\alpha"]  & 2 \arrow[r,swap, "\beta"] & 3\arrow[rr, shift left, "\beta_{p_2}"] \arrow[rr,swap, shift right, "\gamma"] &  & 4 \arrow[llll, bend left,"\beta_{p_1}"] \arrow[lu, "\delta",swap]& 
\end{tikzcd}
    \caption{Ordinary quiver of the trivial extension $T(A')$ of the auxiliary gentle algebra $A'$}
    \label{fig:ordinary quiver}
\end{figure}

Then by Theorem~\ref{ideal TA}, the ideal $I_{T(A')}$ is generated by
\begin{enumerate}
    \item[(1)] The relations in $I_{A}$: $\gamma\delta, \lambda\gamma$;
    \item[(2)] The paths that are not contained in any elementary cycle, which in this case are of two kinds.
    
    \begin{enumerate}
        \item The composition of any two consecutive arrows in different elementary cycles: $\beta\beta_{p_2}$,$\beta_{p_2}\beta_{p_1}$.
        \item The paths of the form $C\eta$, where $C$ is an elementary cycle and $\eta$ is the first arrow of $C$: $\alpha\beta\gamma\beta_{p_1}\alpha$, $\beta\gamma\beta_{p_1}\alpha\beta$, $\gamma\beta_{p_1}\alpha\beta\gamma$, $\beta_{p_1}\alpha\beta\gamma\beta_{p_1}$, $\delta\lambda\beta_{p_2}\delta$, $\lambda\beta_{p_2}\delta\lambda$, $\beta_{p_2}\delta\lambda\beta$.
    \end{enumerate}
    \item[(3)] Linear combinations of paths satisfying (3) in Theorem~\ref{ideal TA}: $\beta_{p_2}\delta\lambda-\gamma\beta_{p_1}\alpha\beta, \delta\lambda\beta_{p_2}-\beta_{p_1}\alpha\beta\gamma$.
\end{enumerate}

It follows from Theorem \ref{teo:ideal relaciones}, that the trivial extension $T(A^{\text{sg}})$ of $A^{\text{sg}}$ is isomorphic to the sg-bound quiver algebra of $T(A')$ with $\Qsp=\{1,2\}$ as the set of special vertices and $\mathcal{C}=\{\alpha\beta\gamma\beta_{p_1}, \delta\lambda\beta_{p_2}\}$ as the set of special cycles. Then $T(A^{\text{sg}})$ is the algebra with quiver

\begin{figure}[h!]
\begin{tikzcd}
1^+ \arrow[rr, "^{+}\alpha^+"] \arrow[rrd, "^{+}\alpha^-" near start]   &  & 2^+ \arrow[rd, "^{+}\beta"] &                        & 5 \arrow[ld, "\lambda"'] &                         \\
1^- \arrow[rr, "^{-}\alpha^-"'] \arrow[rru, "^{-}\alpha^+"' near start] &  & 2^- \arrow[r, "^{-}\beta"]  & 3 \arrow[rr, shift left, "\beta_{p_2}"] \arrow[rr,swap, shift right, "\gamma"] &                          & 4 \arrow[lu, "\delta"'] \arrow[lllll, bend left, "\beta_{p_1}^{-}"] \arrow[lllllu, bend right=60, "\beta_{p_1}^{+}", swap]
\end{tikzcd}
\caption{Quiver of the trivial extension $T(A^{\text{sg}})$}
\label{Fig: trivial extension 1}
\end{figure}
and the ideal $I_{T(A^{\text{sg}})}$ is generated by:
\begin{itemize}
    \item[Type a: ] $({}^{+}\alpha^{+})({}^{+}\beta)- ({}^{+}\alpha^{-})({}^{-}\beta)$, $({}^{-}\alpha^{+})({}^{+}\beta)- ({}^{-}\alpha^{-})({}^{-}\beta)$,
    \item[Type b: ] $\beta_{p_2}\delta\lambda-\gamma(\beta_{p_1}^-)({}^-\alpha^-)({}^-\beta)$, $ \beta_{p_2}\delta\lambda-\gamma(\beta_{p_1}^+)({}^+\alpha^+)({}^+\beta)$, $\delta\lambda\beta_{p_2}-(\beta_{p_1}^+)({}^+\alpha^+)({}^+\beta)\gamma$ and \newline$\delta\lambda\beta_{p_2}-(\beta_{p_1}^-)({}^-\alpha^-)({}^-\beta)\gamma$,
    \item[Type c: ]  $({}^+\beta)\beta_{p_2}, ({}^-\beta)\beta_{p_2},\beta_{p_2}(\beta_{p_1}^+), \beta_{p_2}(\beta_{p_1}^-)$, $({}^+\alpha^+)({}^+\beta)\gamma(\beta_{p_1}^+)({}^+\alpha^+)$, $({}^+\beta)\gamma(\beta_{p_1}^+)({}^+\alpha^+)({}^+\beta)$, \newline $\gamma(\beta_{p_1}^+)({}^+\alpha^+)({}^+\beta)\gamma$, $(\beta_{p_1}^+)({}^+\alpha^+)({}^+\beta)\gamma(\beta_{p_1}^+)$, $\delta\lambda\beta_{p_2}\delta, \lambda\beta_{p_2}\delta\lambda, \beta_{p_2}\delta\lambda\beta$,
    
    \item[Type d: ] $({}^+\alpha^+)({}^+\beta)\gamma(\beta_{p_1}^+)({}^+\alpha^-), ({}^-\alpha^-)({}^-\beta)\gamma(\beta_{p_1}^-)({}^-\alpha^+)$.
\end{itemize}

\end{exmp}

\section{Skew-Brauer graph algebras and trivial extension of skew-gentle algebras}

In this section  we define a generalization of Brauer graph algebras, that we call skew-Brauer graph algebras, and we show that the trivial extension of any skew-gentle algebra is isomorphic to a skew-Brauer graph algebra.

Given an undirected graph $\Gamma$, we denote by $\val(v)$
the \emph{valency} of a vertex $v$, i.e. the amount of edges incident to $v$ in $\Gamma$.

We recall that a \emph{Brauer graph} $\Gamma$ is a tuple $(\Gamma_0, \Gamma_1, \mf, \of)$  where $(\Gamma_0, \Gamma_1)$ is an unoriented graph with  $\Gamma_0$ the set of vertices and  $\Gamma_1$ the set of edges and

\begin{enumerate}
    \item $\mf$ is a \emph{multiplicity function} $\mf : \Gamma_0 \to \mathbb{Z}^+$, assigning a positive number to each $v \in \Gamma_0$.
    \item $\of$ is an \emph{orientation} of the edges attached to each $v \in \Gamma_0$. For a vertex $v$ such that $\val(v) = t$ we denote $\of (v) = (x_1, \ldots, x_t)$. In the particular case that $\val (v) = 1$ and the only edge attached to $v$ is $x$, if $\mf(v) =1$ the cyclic ordering is given by the single element $x$, but when $\mf(v) > 1$ we set $\of(v) = (x,x)$
    so that in the cyclic order the successor (and the predecessor) of $x$ is $x$. 
\end{enumerate}

\begin{defn}\label{def: skew-brauer graph}
 A \emph{skew-Brauer graph} $\Gamma^{\times}$ is tuple $\Gamma^{\times}=(\Gamma'_0,\Gamma_0^{\times}, \Gamma_1, \mf,\of)$ such that $\Gamma=(\Gamma_0'\sqcup \Gamma_0^{\times}, \Gamma_1, \mf,\of)$ is a Brauer graph with $\Gamma'_0\sqcup\Gamma_0^{\times}$ a disjoint union of the set of vertices of the graph $\Gamma$, and such that any vertex $v \in \Gamma^\times_0$ satisfies $\mf(v)=1$, $\of(v)=(x_v)$, and  $v$ is adjacent to a single vertex $w\in \Gamma'_0$.

 We call the vertices of $\Gamma_0^{\times}$ \emph{special vertices} and the edges attached to a special vertex are called \emph{special edges}. To emphasize that a vertex $v$ in $\Gamma$ is special, we replace $v$ by a decorated vertex $v^{\times}$.

Given a skew-Brauer graph $\Gamma^{\times}$, we call the underlying Brauer graph $\Gamma$ the \emph{auxiliary Brauer graph}, and we call $B_{\Gamma}$ the \emph{auxiliary Brauer graph algebra}.
 \end{defn}

\begin{exmp}\label{example-1} Consider the graph in Figure \ref{fig:example-1} where $\mf(v_i^{\times})=1$ for $i\in \{0,1\}$, $\mf(v_4)=1$ and $\mf(v_j) \in \mathbf{N}$ for $j\in \{2,3\}$.

\begin{figure}[h!]
   \adjustbox{scale=.9,center}
{\begin{tikzcd}
    v_1^{\times} \arrow[rd, dash, "2"] \\
     & v_2 \arrow[rr, bend left, dash, "3"] \arrow[rr, bend right,dash, swap,"4"] &   v_4  & v_3 \arrow[l,dash, "5"]\\
    v_0^\times \arrow[ru, dash,swap, "1"] & & 
    \end{tikzcd}}
    \caption{Skew-Brauer graph with two special vertices.}
    \label{fig:example-1}
\end{figure}
This is a skew-Brauer graph with two special vertices $v_1^{\times}$ and $v_0^{\times}$. We have $\of(v_0^{\times}) = (1)$, $\of(v_1^{\times}) = (2)$, $\of(v_2) = (1,2,3,4) $, $\of(v_3) = (3,4,5)$ and $\of (v_4) =(5)$.
\end{exmp}

Following \cite{Sch18}, given a Brauer graph $\Gamma$, we define the Brauer quiver $Q_{\Gamma}=(Q_0, Q_1)$ as follows: 

\begin{enumerate}
\item The set of vertices $Q_0$ is in bijection with the edges of $\Gamma$. 
\item There is an arrow $x \to y$ each time $x$ and $y$ are edges incident to a vertex $v \in \Gamma_0$ and $y$ is the immediate successor of $x$ in the cyclic order $\of(v)$.
\item Each non-distinguished vertex $v$ of $\Gamma$ with $\mf(v)\mathrm{val}(v)\geq 2$ gives rise an oriented cycle $C_v \colon  x_1 \xrightarrow{\alpha_1} x_2 \to \cdots x_{t} \xrightarrow{\alpha_{t}} x_1$ in $Q$, up to cycle equivalence, established by the orientation $\of$, i. e. if $\of(v) = (x_1, \ldots, x_t)$ (that we will set as clockwise in our examples when $t>1$). 
We say that $C_v$ is \emph{a special cycle at $v$}.
\end{enumerate}

We define the skew-Brauer quiver $Q_{\Gamma^{\times}}$ from a graph $\Gamma^{\times}$ as a pair $Q_{\Gamma^{\times}}=(Q_{\Gamma}, \Qsp)$ where $Q_{\Gamma}$ is the Brauer quiver of $\Gamma$ and $\Qsp$ is the set of special vertices that is in bijection with the set $\Gamma_0^{\times}$. As before, to emphasize that a vertex $y$ is special in $Q_{\Gamma^{\times}}$, we replace $y$ by a decorated vertex $y^{\times}$.

Finally, given a skew-Brauer quiver $Q_{\Gamma^\times}$, we define its \emph{admissible skew-Brauer quiver} $Q^{\mathrm{sg}}_{\Gamma^\times}$ as the one constructed from $Q_{\Gamma^{\times}}$ following the  rules described in Definition~\ref{definition-admissible quiver}.

Following the previous definitions, the skew-Brauer graph in Figure~\ref{fig:example-1} defines the skew-Brauer quiver $Q_{\Gamma^{\times}}$ depicted in Figure~\ref{fig:example-1b}, and its admissible skew-Brauer quiver coincides with the quiver given in Figure~\ref{Fig: trivial extension 1}.

\begin{figure}[h!]
\adjustbox{scale=.9,center}
{\begin{tikzcd}
&  &  & 5\arrow[ld,"\lambda", swap] &  \\
1^{\times} \arrow[r,swap,"\alpha"]  & 2^{\times} \arrow[r,swap, "\beta"] & 3\arrow[rr, shift left, "\beta_{p_2}"] \arrow[rr,swap, shift right, "\gamma"] &  & 4 \arrow[llll, bend left,"\beta_{p_1}"] \arrow[lu, "\delta",swap]& 
\end{tikzcd}}
\caption{Skew-Brauer quiver with two special vertices}
\label{fig:example-1b}
\end{figure}

Note that given a skew-Brauer graph $\Gamma^\times$, there is exactly one special cycle $C_v$ in $Q_{\Gamma^{\times}}$ for every $v\in \Gamma_0$ with $\mf(v)\mathrm{val}(v)\geq 2$, up to cycle permutation, and each special cycle $C_v$ induces $2^r$ cycles $\{C_{v,1}, C_{v,2}, \dots, C_{v,2^r}\}$ in  {{$Q^{\mathrm{sg}}_{\Gamma^\times}$}} , where $r$ is the number of special edges attached to $v$, which we call sg-\emph{special cycles at $v$.} Each one of the cycles $C_{v,l}$ and $C_v$ are considered up to cyclic permutation.

\subsection{Skew-Brauer algebras and a set of relations}\label{subsec-skew-brauer algebra} 
Let $\Gamma^{\times}$ be a skew-Brauer  {{graph}}, where $Q_{\Gamma^{\times}}$ is its skew-Brauer quiver with special set of vertices $\Qsp$, and let $Q^{\mathrm{sg}}_{\Gamma^{\times}}$ be its admissible skew-Brauer quiver. We define the ideal $I_{\Gamma^\times}$ in $KQ^{\mathrm{sg}}_{\Gamma^\times}$ as the one generated by the following four types of relations:

\emph{Type 0:} Difference of two paths of the form
$$({}^\varepsilon\alpha^+) ({}^{+} \beta^{\varepsilon'})-({}^{\varepsilon}\alpha^-)({}^{-}\beta^{\varepsilon'})$$
for all possible  $\varepsilon=\{+,-, \emptyset\}$ and $\varepsilon' = \{+,-, \emptyset\}$.

\smallskip

\emph{Type I:} Difference of powers of two sg-special cycles $C_{v, l}$ and $C_{v', l'}$  at vertices $v$ and $v'$ of $\Gamma$
$$C_{v, l}^{\mf(v)}-C_{v', l'}^{\mf(v')}$$

where both cycles start at a vertex $y\in  Q^{\mathrm{sg}}_{\Gamma^\times}$ with $\mf(v) \val(v) \neq 1$ and $\mf(v') \val(v') \neq 1$.

\smallskip

\emph{Type IIa:} Paths of the form
$$C_{v,l}^{\mf(v)}\alpha$$ 

where $\alpha$ is the first arrow of the sg-special cycle $C_{v,l}$.

\emph{Type IIb:}  Paths of the form 
$$C_{v,l}^{\mf(v)-1}({}^{\varepsilon_1}\alpha_1^{\varepsilon_2}) \cdots ({}^{\varepsilon_r}\alpha_r^{\varepsilon_{r+1}})$$

for each special cycle $C_{v}=\alpha_1 \cdots \alpha_r$ in $Q_{\Gamma^{\times}}$ such that $s(\alpha_1)$ is a special vertex of $Q_{\Gamma^{\times}}$, for all possibles  $\varepsilon_k=\{+,-, \emptyset\}$ where $k=1, \dots, r+1$ with $\varepsilon_1\neq \varepsilon_{r+1}$.

\smallskip

\emph{Type III:} Paths of length two of the form
$$\alpha\beta$$

where $\alpha\beta$ is not a sub-path of any sg-special cycle $C_{v, l}$, except when $\alpha=\beta$ and $\alpha$ is a loop induced by a vertex $v$ in $\Gamma^{\times}$, i.e. where $\val(v) = 1$ and $\mf(v)>1$. \footnote{See Examples 2.2 and 2.3 (1) in \cite{Sch18}.}


\begin{defn}\label{def: skew brauer algebra}
Let $\Gamma^{\times}=(\Gamma_0', \Gamma_0^{\times}, \Gamma_1,  \mf, \of)$ be a skew-Brauer graph. The algebra $B_{\Gamma^{\times}}=KQ^{\mathrm{sg}}_{\Gamma^\times}/ I_{\Gamma^{\times}}$ is called the \emph{skew-Brauer}  graph algebra associated to the skew-Brauer graph $\Gamma^\times$.
\end{defn}

\begin{rem}\label{rem:brauer algebras subconjunto de skew brauer algebras}
Note that any skew-Brauer graph {{$\Gamma^{\times}=(\Gamma'_0,\Gamma_0^{\times}, \Gamma_1, \mf, \of)$}}
with $\Gamma_0^\times=\emptyset$ is a Brauer graph. Therefore, in that case, the definition of skew-Brauer graph algebra coincides with the definition of Brauer graph algebra given in \cite{Sch15}.
\end{rem}

\begin{thm}\label{thm: skew brauer simetricas} Skew-Brauer graph algebras (with arbitrary multiplicity function $\mf$) are finite dimensional and symmetric.
\end{thm}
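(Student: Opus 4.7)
The plan is to produce an explicit $K$-basis of $B_{\Gamma^{\times}}$ of bounded size, and then to construct a symmetrising linear form $\lambda\colon B_{\Gamma^{\times}}\to K$ whose induced associative pairing $(a,b)\mapsto \lambda(ab)$ is non-degenerate and symmetric; this is equivalent to $B_{\Gamma^{\times}}$ being symmetric.

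For finite-dimensionality, I would argue that by the relations of type III any non-zero path in $KQ^{sg}_{\Gamma^{\times}}$ must occur as a sub-path of some sg-special cycle $C_{v,l}$, and by the relations of types IIa and IIb no non-zero path can traverse a given $C_{v,l}$ more than $\mf(v)$ times (the relations of type IIb further force, at a distinguished starting vertex, the final winding to close with matching sign). Since each $C_{v,l}$ has length $\val(v)$, since there are at most $2^{r}$ such cycles at each non-distinguished vertex $v$ with $r$ incident distinguished edges, and since $Q^{sg}_{\Gamma^{\times}}$ has finitely many vertices, $\dim_K B_{\Gamma^{\times}}<\infty$.

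For symmetry, I would describe a canonical basis of $e_iB_{\Gamma^{\times}}$ at each vertex $i\in (Q^{sg}_{\Gamma^{\times}})_{0}$ as the set of non-zero proper initial sub-paths of the $C_{v,l}^{\mf(v)}$ that start at $i$, taken modulo the relations of type 0 (which identify paths differing only by sign choices at interior distinguished vertices) and of type I (which identify the various non-zero $C_{v,l}^{\mf(v)}$ sharing the starting vertex $i$). A case analysis, depending on whether $i$ corresponds to an original or to a duplicated vertex, shows that $\soc(e_iB_{\Gamma^{\times}})$ is one-dimensional, generated by a distinguished element $z_i$. Defining $\lambda$ to extract the $z_i$-coefficient at each $i$ and extending by linearity, I would then check on basis paths $p\colon i\to j$ and $q\colon j\to i$ that if $pq\neq 0$ then $pq=\mu z_i$ is a power of an sg-special cycle starting at $i$, while the cyclic shift $qp$ is a power of the same cycle starting at $j$; the type I relations then force $qp=\mu z_j$, yielding $\lambda(pq)=\lambda(qp)$. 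Non-degeneracy is immediate from the basis description: every non-zero basis path $p\colon i\to j$ has a complementary sub-path $q\colon j\to i$ on the ambient sg-special cycle with $pq=z_i$.

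The main obstacle is the combinatorial bookkeeping at distinguished vertices: one must check that the type 0 relations are consistent with the proposed description of $z_i$ (so that $\lambda$ is well defined independently of the sign choices at interior distinguished vertices), and that the type IIb relations kill precisely those windings that would otherwise prevent symmetry of $\lambda$. Once these compatibilities are established, the remaining verifications reduce, by collapsing the $\pm$ duplication, to the classical Brauer-graph situation recalled in Remark~\ref{rem:brauer algebras subconjunto de skew brauer algebras}.
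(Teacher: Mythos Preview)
Your proposal is correct and follows essentially the same approach as the paper: both arguments bound path lengths via the sg-special cycles to establish finite-dimensionality, and both construct a symmetrising linear form that takes the value $1$ precisely on the full powers $C_{v,l}^{\mf(v)}$, then verify the trace property $\lambda(ab)=\lambda(ba)$ by the observation that if $ab$ is such a power then $ba$ is a cyclic permutation of it. The paper is somewhat terser (it invokes \cite{Yam96} for the criterion and does not spell out the basis or the non-degeneracy check), whereas you give a more explicit description of $\soc(e_iB_{\Gamma^{\times}})$ and of the pairing, but the underlying mechanism is the same.
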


\begin{proof}
To show that $B_{\Gamma^{\times}}$ is a finite dimensional algebra, it is enough to show that the ideal $I_{\Gamma^{\times}}$ is admissible. Since any sg-special cycle $C_{v,l}$ is either a cycle of length at least 2 or a loop surrounding a $v$ with multiplicity function $\mf(v)>1$, we have that $I_{\Gamma^\times}\subset J^2$, where  $J^2$ denotes the ideal generated by paths of length 2.
 
Moreover, it follows from the relations of type I, IIa, IIb, and III, that any path of length at least  $N+1$, where \[N=\text{max} \, \{ \text{length} \, C_{v, l}^{\mf(v)}\},\] is an element of the ideal $I_{\Gamma^{\times}}$, therefore $I_{\Gamma^\times}$ is admissible, as we claimed. 

By \cite{Yam96}, to prove that $B_{\Gamma^\times}$ is symmetric it is enough to prove that the $K$-lineal form $\phi: B_{\Gamma^\times} \to K$ given by $$\phi(x)=\begin{cases} 1 & \text{if $x=C_{v,l}^{\mf(v)}$ is the $\mf(v)$-power of a sg-special cycle for some $v\in \Gamma^{\times}$}\\
0 & \text{otherwise}
\end{cases}$$

satisfies $\phi(ab)=\phi(ba)$ for any paths $a,b$ in $B_{\Gamma^\times}$.

Let $a$ and $b$ be any two paths in $B_{\Gamma^\times}$ such that $ab$ is not zero. By relations of type IIa, IIb, and III, we notice that $a$, $b$ and $ab$ is a sub-path of the $\mf(v)$-power of a sg-special cycle $C_{v,l}^{\mf(v)}$ for some vertex $v\in\Gamma^{\times}$. If $ab=C_{v,l}^{\mf(v)}$, then $t(ab)=t(b)=s(a)$, and as a consequence $ba$ is a cyclic permutation of  $C_{v,l}^{\mf(v)}$, then $\phi(ba)=1=\phi(ab)$. It is easy to see that if  $ab$ is not the $\mf(v)$-th power of any sg-special cycle $C_{v,l}$, then $\phi(ab)=0=\phi(ba)$.
\end{proof}

\begin{rem}\label{rem: projectives}
The following example shows a simple way to compute the indecomposable projective modules of skew-Brauer graph algebras corresponding to vertices of $Q^{\mathrm{sg}}_{\Gamma^\times}$ using its associated Brauer graph. It also shows that skew-Brauer graph algebras are not special biserial.
\end{rem}

\begin{exmp}\label{ex:projectives}
 Let $\Gamma^\times=(\Gamma'_0, \Gamma^\times_0, \Gamma_1, \mf, \of)$ be the skew-Brauer graph  of the Example \ref{example-1}, and $\Gamma=(\Gamma'_0 \sqcup\Gamma^\times_0, \Gamma_1, \mf, \of)$ its associated Brauer graph. For simplicity we set $\mf(v_i)=1$ for every $i=0,1,2,3,4,5$. Following \cite{Sch18}, one can compute the indecomposable projective $B_\Gamma$-module $P_i$ of each vertex $i\in Q_{\Gamma}$ from $\Gamma$, for example, the indecomposable modules $P_2$ and $P_3$ (denoted by their composition factors) are as follows

\begin{figure}[H]
\adjustbox{scale=.7,center}
{
\begin{tikzcd}
 & 2\arrow[d, no head] & &  & & 3 \arrow[dl, no head]\arrow[dr, no head]& \\
& 3 \arrow[d, no head] & & & 4\arrow[d, no head]& & 4\arrow[dd, no head]\\
P_2= &4 \arrow[d, no head]& & P_3= &1\arrow[d, no head] & & \\
& 1\arrow[d, no head] &  & &2\arrow[dr, no head] & &5\arrow[dl, no head]\\
& 2 & & & &3 &
\end{tikzcd}}
\end{figure}

Since the vertex $2$ corresponds to a special vertex in $Q_{\Gamma^{\times}}$, then there are two indecomposable projectives $B_{\Gamma^\times}$-modules ${P_2^+}$ and ${P_2^-}$, both of them are obtained from $P_2$ by replacing every special vertex $i$ in their composition series of $rad(P_2)/soc(P_2)$ by $i^+$ and $i^-$ and adding the respective linear maps induced by the arrows in $Q^{\mathrm{sg}}_{\Gamma^\times}$, and then replacing the socle and the top by $2^+$ and $2^-$ respectively. While, for $P_3$ there is only one projective $B_{\Gamma^\times}$-module $\widehat{P_3}$ which is obtained from $P_3$ by replacing every special vertex $i$ in their composition series of $rad(P_2)/soc(P_2)$ by $i^+$ and $i^-$ and considering their respective linear maps induced by the arrows in $Q^{\mathrm{sg}}_{\Gamma^\times}$. Then the projectives $P_2^+, P_2^-$ and $\widehat{P_3}$ are as follows.

\begin{figure}[h!]
\adjustbox{scale=.7,center}
{\begin{tikzcd}
 & & 2^+ \arrow[d, no head] &&&& 2^-\arrow[d, no head] &&&&& 3 \arrow[ld, no head] \arrow[rd, no head] & \\ 
 && 3 \arrow[d, no head] &&&& 3 \arrow[d, no head] &&&& 4 \arrow[d, no head] \arrow[ld, no head]   && 4 \arrow[dd, no head] \\
P_{2^+} = && 4 \arrow[ld, no head] \arrow[rd, no head] && P_{2^-}= && 4 \arrow[ld, no head] \arrow[rd, no head] && \widehat{P_3}= & 1^+ \arrow[d, no head] \arrow[rd, no head] & 1^- \arrow[d, no head] \arrow[ld, no head] && \\
& 1^+ \arrow[rd, no head] && 1^- \arrow[ld, no head] && 1^+ \arrow[rd, no head] && 
1^- \arrow[ld, no head] && 2^+ \arrow[rrd, no head] & 2^- \arrow[rd, no head] && 5 \arrow[ld, no head] \\
&& 2^+  &&&& 2^- &&&&& 3 & 
\end{tikzcd}
}
\end{figure}
\end{exmp}

\subsection{The trivial extension of a skew-gentle algebra}\label{subsec the trivial extension}

In this section, we state a relation between trivial extension of skew-gentle algebras and a subfamily of skew-Brauer graph algebras. To do that, we adapt the construction of generalized ribbon graphs given in \cite[Remark 3.4]{LSV}. In particular, we recall the definition of $sp$-maximal paths, which generalizes the notion of maximal paths in algebras defined by non-admissible quiver presentations.

Given a skew-gentle algebra $A=\SG$, we say that a path $p\neq 0$ in $A$ is $sp$-maximal if $p\alpha=\alpha p=0$ in $A$ for every arrow $\alpha$ in $Q$ that is not a special loop, and if $s(p)$ (resp. $t(p)$) is a special vertex then the first arrow (resp. the last arrow) of $p$ is the special loop attached to it. 

Notice that, given a skew-gentle algebra $A=\SG$, $f=f^k$ modulo $I$ for any special loop $f$ and for any $k\in\mathbb N$. Therefore, we can assume without loss of generality that if a path $p$ contains a special loop, the exponent of $f$ is one.

\begin{exmp}
Let $A_1=A_1(Q,I_1, \textrm{Sp}_1)$ be a skew-gentle algebra with quiver $Q$
\[\begin{tikzcd}
    1 \arrow[r,swap,"\alpha"]  & 2 \arrow[r,swap, "\beta"]\arrow[out=60,in=120,loop  , distance=1.6em, swap, "f_1"] & 3 \arrow[r, swap, "\gamma"]& 4 \arrow[out=60,in=120,loop  , distance=1.6em,swap, "f_2"]
\end{tikzcd}\]
ideal $I_1=\langle\alpha\beta, \beta\gamma, f_1^2\rangle$ and $\textrm{Sp}_1=\{f_2\}$ the set of special loops. Then the set of $sp$-maximal paths in $A_1$ is given by $\alpha f_1 \beta$ and $\gamma f_2$. For the skew-gentle algebra $A_2(Q,I_2,\textrm{Sp}_2)$ sharing the same quiver as $A_1$, but with the ideal $I_2=\langle\alpha\beta, \beta\gamma\rangle$, and the set of special vertices $\textrm{Sp}_2=\{f_1, f_2\}$, the set of $sp$-maximal paths in $A_2$ coincides with the set for $A_1$.
\end{exmp}

We define a skew-Brauer graph $\Gamma^\times_{A}=\Gamma^\times$ associated with a skew-gentle algebra $A=\SG$ as follows. 

\begin{itemize}
    \item The set of vertices $\Gamma_0'$ of  $\Gamma_A^\times$ is in bijection with the union of 
    
    \begin{itemize}
        \item[i)] all sp-maximal paths in $A$, 
        \item[ii)] trivial paths $e_x$ such that $x$ is either the source or the target of only one arrow or $x$ is the target of exactly one arrow $\alpha$ and the source of exactly one arrow $\beta$, and $\alpha \beta\neq 0$  in $Q$.
    \end{itemize}
    
    \item The set of special vertices $\Gamma^\times_0$  is in bijection with trivial paths $e_x$ such that $x$ is a special vertex of $Q$.
    \item The set of edges of $\Gamma^\times_A$ is in bijection with the vertices of $Q_0$ (note that this includes the special vertices).
\end{itemize}

The multiplicity function $\mf$ is the constant function equal to 1, and the cyclic ordering of the edges attached to each vertex $v_0\in \Gamma_0$ is given either by the order of the sp-maximal path associated to $v_0$ or $\of (v)=(e_x)$ otherwise.

\begin{lem}\label{lem:maximal-paths AyA'}
Let $A=\SG$ be a skew-gentle algebra and $A'$ be its auxiliary gentle algebra.
Then the sp-maximal paths of the skew-gentle algebra $A$ are in bijection with the maximal paths of the auxiliary gentle algebra $A'$.
\end{lem}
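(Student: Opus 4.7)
The plan is to construct the bijection explicitly. Given a maximal path $q = \alpha_1 \cdots \alpha_n$ of $A'$, define $\Phi(q)$ as the path in $Q$ obtained from $q$ by inserting the special loop $f_v$ at every special vertex $v \in Sp$ visited by $q$ (both at the endpoints and at interior positions). The inverse $\Psi$ will strip off every special loop from an sp-maximal path. The lemma then reduces to showing that $\Phi(q)$ is sp-maximal in $A$, that $\Psi(p)$ is maximal in $A'$, and that the two are mutually inverse.

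The crucial structural step is the following observation: if $q$ is maximal in $A'$ with $t(q) = x \in Sp$ (and symmetrically for $s(q)$), then no arrow of $Q'$ starts at $x$. Were there such a $\beta \in Q'$, condition (4) of Definition~\ref{defn:skew-gentle} would force $\alpha_n \beta$ to lie in the underlying gentle ideal of $A$; but the auxiliary ideal $I'$ of $A'$ excludes length-two relations at special midpoints, so $\alpha_n \beta \notin I'$, and thus $q\beta$ would be a nonzero extension of $q$ in $A'$, contradicting maximality. With this in hand, $\Phi(q)$ can be checked to be sp-maximal: the endpoint clause holds by construction; the zero-product condition at a special endpoint is vacuous by the observation above; and at a non-special endpoint, any extending $\alpha' \in Q'$ gives $\alpha_n\alpha' \in I' \subseteq I$ by maximality of $q$ in $A'$, killing the product in $A$, since the interior loops inserted by $\Phi$ do not interfere with this subpath relation.

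For the inverse direction, the main step is to show that any nonzero sp-maximal path $p$ in $A$ must contain $f_v$ at every visited special vertex $v$. Otherwise $p$ would carry a subpath $\alpha \alpha'$ with $\alpha, \alpha' \in Q'$ and midpoint $v \in Sp$, and by condition (4) of Definition~\ref{defn:skew-gentle} this subpath lies in the underlying gentle ideal of $A$, forcing $p = 0$ in $A$, contradicting nontriviality. This pins down the canonical form of nonzero sp-maximal paths as exactly the image of $\Phi$, so stripping all loops yields a well-defined path $\Psi(p)$ in $Q'$; the maximality of $\Psi(p)$ in $A'$ follows by arguments symmetric to those for $\Phi$, and the identities $\Psi \circ \Phi = \mathrm{id}$ and $\Phi \circ \Psi = \mathrm{id}$ are then immediate from the construction.

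The main obstacle is the handling of interior special loops. The definition of sp-maximal, taken literally, neither demands nor forbids them, so one must argue at the level of the algebra $A$ (via condition (4) of the skew-gentle definition together with the vanishing observation above) to single out the correct canonical representative. Once this canonical form is identified, the rest of the verification is essentially combinatorial.
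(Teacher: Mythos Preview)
Your proof is essentially correct and is in fact more complete than the paper's. The paper only argues one direction: given an $sp$-maximal path $\rho$ in $A$, replacing each special loop by the trivial path yields a maximal path $\rho'$ in $A'$. It checks that $\rho'\neq 0$ and that $\rho'$ cannot be extended on either side, but it never constructs the inverse map nor addresses injectivity or surjectivity. Your map $\Psi$ is exactly the paper's map, while your $\Phi$ supplies the missing half of the bijection. Your structural observation---that a maximal path of $A'$ terminating at $x\in Sp$ forces there to be no $Q'$-arrow starting at $x$ (via condition~(4) of Definition~\ref{defn:skew-gentle} together with the fact that the auxiliary ideal drops relations through special vertices)---is precisely what makes $\Phi$ land in $sp$-maximal paths, and it does not appear in the paper's argument.

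Two small points should be tightened. First, record explicitly why $\Phi(q)\neq 0$ in $A$: after reducing via $f_v^2=f_v$, a path vanishes in $A$ exactly when it contains a length-two subword from the gentle ideal $I'$ of Definition~\ref{defn:skew-gentle}; the length-two subwords of $\Phi(q)$ that involve an inserted special loop are not paths in $Q'$ and hence cannot be gentle relations, while the remaining subwords $\alpha_i\alpha_{i+1}$ have non-special midpoint and are already nonzero in $A'$ since $q$ is. Second, the claim that the argument ``pins down the canonical form of nonzero $sp$-maximal paths as exactly the image of $\Phi$'' is only literally correct once you regard $sp$-maximal paths as elements of $A$ rather than as words in $KQ$: otherwise $f_v^k$ for $k\geq 1$ gives infinitely many distinct representatives of the same element. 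With these two clarifications in place, your $\Phi$ and $\Psi$ are mutually inverse and the bijection is established.
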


\begin{proof}
Let $\rho=\alpha_1\dots \alpha_k$ be a $sp$-maximal path in $A$ and $\rho'=\beta_1\dots \beta_k$ be the path in $A'$ where $\beta_i=\alpha_i$ if $\alpha_i$ is not a special loop and $\beta_i=e_{t(\alpha_{i-1})}$ otherwise. We claim that $\rho'$ is a maximal path in $A'$.

Notice that $\rho'\neq 0$, otherwise $\beta_i\beta_{i+1}$ is a zero relation in $A'$ for some $i$, which implies that $\beta_i=\alpha_i$ and $\beta_{i+1}=\alpha_{i+1}$ and $\alpha_i\alpha_{i+1}$ is a zero relation in $A$, as a consequence $\rho$ is not a sp-maximal path.

Suppose $\rho'$ is not a maximal path in $A'$. Then there exists at least one arrow in $Q_{A'}$ such that $\alpha\rho'\neq 0$ or $\rho'\alpha\neq 0$ in $A'$. Without loss of generality, suppose that $\alpha\rho'\neq 0$ in $A'$. We notice that if the start $s(\rho)=x$ of $\rho$ is not a special vertex, then $\alpha$ is not a special loop and $\alpha$ is an arrow of $Q_{A}$, then $\alpha\rho$ is not zero in $A$, a contradiction with the hypothesis. Now, if $x$ is a special vertex, then the first arrow $\alpha_1$ of $\rho$ is the special loop attached to $x$ and $\alpha\rho$ is not zero in $A$, again a contradiction with the hypothesis.
\end{proof}

\begin{thm}\label{thm: equiv1}
Let $A=\SG$ be a skew-gentle algebra and $\Gamma_{A}^{\times}=(\Gamma_0', \Gamma_0^{\times}, \Gamma_1,  \mf= 1, \of)$ be its skew-Brauer graph. Then $T(A^{\mathrm{sg}})\simeq B_{\Gamma_{A}^{\times}}$.
\end{thm}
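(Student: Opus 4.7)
The plan is to reduce to the gentle case and then apply the sg-construction to both sides of Schroll's theorem. Let $A'=KQ'/I'$ be the auxiliary gentle algebra of $A$ and let $\Gamma_{A'}$ be the Brauer graph associated to $A'$. By \cite{Sch15}, one has $T(A')\cong B_{\Gamma_{A'}}$, under which the quiver $Q_{T(A')}$ is identified with the Brauer quiver $Q_{\Gamma_{A'}}$ and the ideal $I_{T(A')}$ is generated by the usual Brauer-type relations (I, IIa, III, with $\mf\equiv 1$). Applying the sg-construction (Definitions~\ref{definition-admissible quiver} and~\ref{dfn:sg-ideal}) to both sides with $Sp$ as the set of distinguished vertices should produce the desired isomorphism.

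First I would show that the underlying graph of $\Gamma_A^\times$ coincides with $\Gamma_{A'}$, the only difference being that the vertices corresponding to trivial paths $e_x$ with $x\in Sp$ are marked as distinguished. By Lemma~\ref{lem:maximal-paths AyA'} the $sp$-maximal paths of $A$ are in bijection with the maximal paths of $A'$, giving the bijection on vertices; the edge sets of both graphs are in bijection with the common vertex set $Q_0=Q'_0$; and the cyclic orderings $\of$ agree. Combining this identification with Lemma~\ref{lemma:quiver of trivial extension} yields the chain $Q_{T(A^{sg})}=Q^{sg}_{T(A')}=Q^{sg}_{\Gamma_A^\times}$, settling the isomorphism at the level of quivers.

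To match the ideals, I would invoke Theorem~\ref{teo:ideal relaciones}, which presents $I_{T(A^{sg})}$ as the sg-ideal of $T(A')$ with distinguished cycles equal to the elementary cycles of $T(A')$. Under Schroll's identification these elementary cycles are precisely the special cycles of $\Gamma_{A'}$, and their sg-versions are exactly the sg-special cycles $C_{v,l}$ of $\Gamma_A^\times$. Since $\mf\equiv 1$, the four families of generators of $I_{\Gamma_A^\times}$ should then match the four families of the sg-ideal on the nose: Type 0 with Type (a), Type I with Type (b), the union of Types IIa and III with Type (c), and Type IIb with Type (d).

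The hard part will be this last verification of relations, especially the correspondence between Type (c) and Types IIa/III. One must check that every monomial relation of $I_{T(A')}$ (namely the relations of $I'$, the paths $C_v\alpha$ that close a special cycle, and the ``paths not contained in any elementary cycle'' in Theorem~\ref{ideal TA}) translates under the sg-construction exactly to the union of Types IIa and III of $I_{\Gamma_A^\times}$, and conversely. A parallel subtlety arises for Type IIb versus Type (d): one must confirm that the sign constraint $\varepsilon_1\neq\varepsilon_{r+1}$ appearing in Type IIb corresponds precisely to the new monomial relations produced by Type (d) of the sg-ideal when a special cycle starts at a distinguished vertex.
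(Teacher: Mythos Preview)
Your proposal is correct and follows essentially the same route as the paper: identify the underlying Brauer graph of $\Gamma_A^\times$ with $\Gamma_{A'}$ via Lemma~\ref{lem:maximal-paths AyA'}, invoke Schroll's theorem for $T(A')\cong B_{\Gamma_{A'}}$, and then apply Theorem~\ref{teo:ideal relaciones} to transport the sg-construction. The paper simply declares the final relation-matching ``easy to check,'' whereas you spell out the correspondence Type~0~$\leftrightarrow$~(a), Type~I~$\leftrightarrow$~(b), Types~IIa/III~$\leftrightarrow$~(c), Type~IIb~$\leftrightarrow$~(d); this is exactly the content hidden behind that phrase, and your identification is the right one.
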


\begin{proof}
Let $A'$ be the auxiliary gentle algebra of $A$. As a consequence of Lemma~\ref{lem:maximal-paths AyA'}, it is easy to check that $\Gamma_A^{\times}$ as Brauer graph, namely, the Brauer graph $\Gamma=(\Gamma_0'\cup \Gamma^\times_0,\Gamma_1,\mf, \of)$ coincides with the Brauer graph $\Gamma_{A'}$.

By \cite[Theorem 1.2]{Sch15}, the trivial extension  $T(A')$ of $A'$ is isomorphic to the Brauer graph algebra $B_{\Gamma_{A'}}$
and moreover, by Theorem \ref{teo:ideal relaciones}  we have that $T(A)$ is isomorphic to $KQ^{\mathrm{sg}}_{T(A')}/I^{\mathrm{sg}}_{T(A')}$. It is easy to check that the last presentation is isomorphic to the skew-Brauer graph algebra $B_{\Gamma_{A}^{\times}}$ described in Section \ref{subsec-skew-brauer algebra}. Therefore, the statement follows.
\end{proof}

To prove the second theorem of this section, we need to recall the definition of \emph{admissible cut} from \cite{FP02}. Let $T(A)$ be the trivial extension of a bound quiver algebra $A$ and let $\mathcal{D}$ be a set of arrows of $Q_{T(A)}$. We say that $\mathcal{D}$
is an \textit{admissible cut} of $Q_{T(A)}$ if it consists of exactly one
arrow of each elementary cycle in $Q_{T(A)}$ (see Definition~\ref{def:elementary}). Moreover, we say that the algebra $B$ is an \textit{admissible cut} of $T(A)$ if it is the
quotient algebra $T(A)/\langle \mathcal{D}\rangle$, where $\mathcal{D}$ is an admissible cut of $Q_{T(A)}$. We say that $T(A)/\langle \mathcal{D}\rangle$ is the \emph{quotient algebra of $T(A)$ induced by $\mathcal{D}$.}

Notice that the definition of admissible cut is given for trivial extensions of bound path algebras. However, it is possible to generalize this idea to give a recognition theorem of trivial extension, see \cite{FSTTV22} for details. To do this, given an algebra $A$ we consider a finite set $\mathcal{C}$ of non-zero cycles of length at least two, then a \emph{cut set} $\mathcal{D}$ of $A$ with respect to $\mathcal{C}$ is
a subset of arrows $\{\alpha_1, \dots, \alpha_k\}$ of $Q_A$ containing exactly one arrow in each cycle of $\mathcal{C}$, and such that $\alpha_i$ appears at most once in each cycle $C$ of $\mathcal{C}$. This definition corresponds to definition of \emph{allowable cut} given in \cite{FSTTV22}.

\begin{rem} If $A$ is a trivial extension of a bound path algebra $B$, then any admissible cut is indeed a cut set with respect to the set of the elementary cycles.
\end{rem}

\begin{thm}\label{thm:admissiblecuts}
Let $K$ be an algebraically closed field and let $\Gamma^{\times}=(\Gamma'_0, \Gamma_0^\times, \Gamma_1, \mf, \of)$ be a skew-Brauer graph with $\mf(v)=1$ for every vertex $v$ in $\Gamma_0$ and $B_{\Gamma^\times}=KQ_{\Gamma^{\times}}/I_{\Gamma^{\times}}$ be its associated skew-Brauer graph algebra. Then there exists (not necessarily unique) skew-gentle algebra $A=\SG$, such that $T(A^{\mathrm{sg}})\simeq B_{\Gamma^{\times}}$.
\end{thm}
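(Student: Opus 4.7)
My strategy is to construct a skew-gentle algebra $A$ whose associated skew-Brauer graph $\Gamma^{\times}_A$ (in the sense of the construction preceding Lemma~\ref{lem:maximal-paths AyA'}) equals the given $\Gamma^{\times}$, and then invoke Theorem~\ref{thm: equiv1} to obtain $T(A^{sg}) \simeq B_{\Gamma^{\times}_A} = B_{\Gamma^{\times}}$. To build $A$, first forget the distinguished structure and consider the underlying Brauer graph $\Gamma = (\Gamma'_0 \sqcup \Gamma_0^{\times}, \Gamma_1, \mf \equiv 1, \of)$. Since the multiplicity is trivial, \cite[Theorem~1.2]{Sch15} yields a gentle algebra $A'$ with $B_{\Gamma} \simeq T(A')$, realised by an admissible cut of $B_{\Gamma}$ (i.e.\ the deletion of exactly one arrow from each special cycle $C_w$ of $Q_{\Gamma}$).

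The vertices of $Q_{A'}$ are in bijection with $\Gamma_1$; let $\tilde x_v \in (Q_{A'})_0$ be the vertex corresponding to the unique edge $x_v$ incident to each distinguished vertex $v^{\times} \in \Gamma_0^{\times}$. The next step is to use the freedom in choosing the admissible cut so that every such $\tilde x_v$ is eligible as a special vertex in the sense of Definition~\ref{defn:skew-gentle}(4). Since $v^{\times}$ is a leaf with $\mf(v^{\times}) = 1$, it contributes no special cycle of its own, so all arrows of $Q_{\Gamma}$ incident to $\tilde x_v$ come from the special cycle $C_w$ at the opposite endpoint $w$ of $x_v$. Deleting an arrow of $C_w$ incident to $\tilde x_v$ leaves $\tilde x_v$ with at most one non-loop incoming and one non-loop outgoing arrow in $Q_{A'}$, which is exactly the configuration needed to attach a special loop; when several distinguished leaves are attached to the same $w$, the cut can be positioned between two of them that are consecutive in $\of(w)$ to dispose of them simultaneously.

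Once the cut is suitably chosen, define $A = KQ/I$ by adjoining to $A'$ a special loop $f_{\tilde x_v}$ at each $\tilde x_v$ together with the relations $f_{\tilde x_v}^2 - f_{\tilde x_v}$; by construction $A$ is skew-gentle with auxiliary gentle algebra $A'$. It remains to verify $\Gamma^{\times}_A = \Gamma^{\times}$. The edges of both graphs are in bijection with $(Q_A)_0 = (Q_{A'})_0$, hence with $\Gamma_1$. By Lemma~\ref{lem:maximal-paths AyA'}, the sp-maximal paths of $A$ correspond bijectively to the maximal paths of $A'$, so the non-distinguished vertex set and the cyclic orderings of $\Gamma^{\times}_A$ coincide with those of $\Gamma_{A'} = \Gamma$; and the distinguished vertices of $\Gamma^{\times}_A$ are the $e_{\tilde x_v}$ for $v^{\times} \in \Gamma_0^{\times}$, matching $\Gamma_0^{\times}$. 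Applying Theorem~\ref{thm: equiv1} now gives $T(A^{sg}) \simeq B_{\Gamma^{\times}_A} = B_{\Gamma^{\times}}$.

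The principal technical obstacle lies in the cut-selection step: when several distinguished neighbours of a single non-distinguished vertex $w$ occupy non-consecutive positions in $\of(w)$, no admissible cut of $C_w$ can simultaneously separate each such $\tilde x_v$ from the interior of the resulting maximal path, and a naive choice would leave some $\tilde x_v$ with an incoming--outgoing composition lying in the maximal path and therefore not in $I'$. Overcoming this is the heart of the argument and is where the non-uniqueness in the statement is crucial: one must either exhibit a global admissible cut (perhaps obtained by iterating the consecutive-pair trick and using the freedom at other cycles) that produces the skew-gentle condition at every distinguished vertex at once, or accept a different gentle algebra $A'$ than the most natural one, absorbing the problematic length-two compositions into the auxiliary ideal via a judicious reordering of the admissible cut.
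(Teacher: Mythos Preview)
Your overall strategy---cut the underlying Brauer graph algebra $B_{\Gamma}$ to obtain a gentle algebra $G$, promote $G$ to a skew-gentle algebra by declaring the vertices $\tilde x_v$ special, and then invoke Theorem~\ref{thm: equiv1}---is exactly the route the paper takes (the paper phrases the last step via the induced cut set $\mathcal{D}'$ on $B_{\Gamma^\times}$, but the content is the same).

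However, the ``principal technical obstacle'' you describe is illusory; \emph{any} admissible cut $\mathcal{D}$ of $B_{\Gamma}$ works, with no positioning constraints. The source of the confusion is a conflation of two different gentle algebras attached to a skew-gentle $A$: the algebra $KQ'/I'$ of Definition~\ref{defn:skew-gentle}, whose ideal \emph{contains} the length-two relation $\alpha\beta$ at each special vertex, and the auxiliary gentle algebra $A'$ of Definition~\ref{def-auxiliary algebra}, whose ideal \emph{omits} exactly those relations. When the cut leaves both the incoming arrow $\alpha$ and the outgoing arrow $\beta$ at some $\tilde x_v$, the composite $\alpha\beta$ is indeed a nonzero path in $G=B_\Gamma/\langle\mathcal{D}\rangle$; but you then build the skew-gentle $A$ by adjoining to $G$ the special loop \emph{and} the relation $\alpha\beta$. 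Since $\tilde x_v$ has a unique incoming and a unique outgoing arrow in $Q_G$ (both coming from the single special cycle $C_w$, the distinguished endpoint contributing none), adding $\alpha\beta$ keeps $KQ'/I'$ gentle and condition~(4) of Definition~\ref{defn:skew-gentle} is satisfied. The auxiliary gentle algebra of this $A$, obtained by deleting the loop and dropping the relation $\alpha\beta$ again, is precisely $G$. Hence by Remark~\ref{rmk:change of quivers} the admissible presentation $A^{sg}$ is the sg-construction of $G$ with $Sp$, and Lemma~\ref{lem:maximal-paths AyA'} together with $\Gamma_{G}=\Gamma$ gives $\Gamma^\times_A=\Gamma^\times$, so Theorem~\ref{thm: equiv1} applies.

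In particular, the scenario of several non-consecutive distinguished leaves at the same vertex $w$ causes no difficulty: at each such $\tilde x_v$ one simply adds the corresponding $\alpha\beta$ relation, and these additions are local and independent because every $\tilde x_v$ has a unique in- and out-arrow. You may therefore delete the final paragraph of your proposal and replace it with the observation above; the rest of your argument then goes through without the case analysis you anticipated.
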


\begin{proof}
Let $\Gamma^{\times}=(\Gamma'_0, \Gamma^{\times}_{0}, \Gamma_1, \mf, \of)$ be the skew-Brauer graph with $\mf(v)=1$ for every vertex $v$ in $\Gamma_0$ and $B_{\Gamma^\times}$ be a skew-Brauer algebra with $\mathcal{C}$ the set of sg-special cycles and $\Qsp$ the set of special vertices.

Consider $B_{\Gamma}$ be the auxiliary Brauer graph algebra, with $\Gamma=(\Gamma'_0\cup \Gamma^{\times}_{0}, \Gamma_1, \mf, \of)$.
Notice that $Q_\Gamma$ is also obtained from $Q_{\Gamma^{\times}}=(Q_\Gamma, \Qsp)$ by forgetting the extra information of $\Qsp$.
We claim that any admissible cut of $B_{\Gamma}$ induces a cut set of $B_{\Gamma^{\times}}$.

By \cite{Sch15}, there exists an admissible cut $\mathcal{D}$ on $B_{\Gamma}$ such that $B_{\Gamma}/\langle\mathcal{D}\rangle$ is gentle. Let $\mathcal{D}'$ be the set of arrows $$\mathcal{D}'=\{{}^{\varepsilon}\alpha^{\varepsilon'} \mid \alpha\in \mathcal{D} \quad\text{for all possible }\quad \varepsilon, \varepsilon'\in\{+,-, \emptyset \}\}$$ in $B_{\Gamma^{\times}}$.

Recall that by definition of $B_{\Gamma^{\times}}$, the set of sg-special cycles $\mathcal{C}$ are induced by the set of special cycles of $B_{\Gamma_A}$. To fix notation, for each special cycle $C_v$ of  $Q_\Gamma$, denote by  $\{C_{v, 1}, \dots C_{v, 2^r}\}$ the set of  $2^r$ sg-special cycles in $Q_{\Gamma^\times}^{\mathrm{sg}}$ (up to cyclic equivalence).

Suppose that there is a sg-special cycle $C_{v,l}= {}^{\varepsilon_1}\alpha_1^{\varepsilon_1'} \dots {}^{\varepsilon_m}\alpha_m^{\varepsilon_m'}$ that contains two different arrows ${}^{\varepsilon_i}\alpha_i^{\varepsilon_i'}$ and ${}^{\varepsilon_j}\alpha_j^{\varepsilon_j'}$ of $\mathcal{D}'$. Then by construction the cycle $C_v=\alpha_1\dots\alpha_m$ is special and $\alpha_i$ and $\alpha_j$ are arrows of $\mathcal{D}$. Since $\mathcal{D}$ is an admissible cut, then $\alpha=\alpha_i=\alpha_j$, and because $B_{\Gamma}/\langle \mathcal{D}\rangle$ is gentle $\alpha$ appears exactly once in $C_v$, this implies that ${}^{\varepsilon_i}\alpha_i^{\varepsilon_i'}={}^{\varepsilon_j}\alpha^{\varepsilon_j'}_i$.

To finish the proof, it is enough to observe that the algebra $B_{\Gamma^\times}/\langle \mathcal{D}'\rangle$ is the admissible version of $B_{\Gamma}/\langle \mathcal{D} \rangle$ with $\Qsp$ the set of special vertices of $B_{\Gamma^{\times}}$. \end{proof}  

We introduce the following definition motivated by the cut sets of a skew-Brauer algebra $B_{\Gamma^\times}$ used in Theorem~\ref{thm:admissiblecuts}, which are induced by cut sets of its auxiliary Brauer algebra $B_{\Gamma}$.

\begin{defn}\label{def:good cut}
Let $\Gamma^{\times}=(\Gamma'_0, \Gamma_0^\times, \Gamma_1, \mf, \of)$ be a skew-Brauer graph with $\mf(v)=1$ for every $v$ in $\Gamma_0$ and $B_{\Gamma^\times}=KQ_{\Gamma^{\times}}/I_{\Gamma^{\times}}$ be its associated skew-Brauer graph algebra. A cut set $\mathcal{D}$ is \emph{good} if it is induced by a cut set $\mathcal{D'}$ of its auxiliary Brauer graph algebra $B_{\Gamma}$, namely $$\mathcal{D}=\{{}^{\varepsilon}\alpha^{\varepsilon'}\mid \alpha\in \mathcal{D'} \quad\text{for all possible }\quad \varepsilon, \varepsilon'\in\{+,-, \emptyset \}\}$$
\end{defn}

We observe that any good cut set of a skew-Brauer graph algebra induces a skew-gentle algebra. We show that converse is not true in the following example.

\begin{exmp}\label{ex:admissible cut}
In this example, we show a cut set $\mathcal{D}$ of skew-Brauer algebra that is not good, but the quotient algebra induced by $\mathcal{D}$ is skew-gentle.

Let $B=kQ_{\Gamma^\times}/I_{\Gamma^\times}$ be a skew-Brauer algebra with quiver $Q_{\Gamma^{\times}}$ and skew-Brauer graph $\Gamma^{\times}$ depicted below 

\begin{figure}[h!]
   \adjustbox{scale=.9,center}
{\begin{tikzcd}
& & & & & & & & \\
Q_{\Gamma^{\times}}: &1^{\times} \arrow[r, shift left=.75ex, "\alpha"] &2 \arrow[r, shift left=.75ex, "\beta"] \arrow[l, shift left=.75ex, "\gamma"]&3^\times  \arrow[l, shift left=.75ex, "\delta"] && \Gamma^{\times}: \quad v_1^\times \arrow[r, dash] & v_2 \arrow[r, dash] & v_3 \arrow[r, dash] &v_4^\times. \\
& & & & & & & &
\end{tikzcd}
}
\end{figure}
To compute the cut set, see that the admissible presentation $B^{\mathrm{sg}}=KQ^{\mathrm{sg}}_{\Gamma^\times}/I^{\mathrm{sg}}_{\Gamma^{\times}}$ of the algebra $B$ is given by quiver $Q^{\mathrm{sg}}_{\Gamma^{\times}}$

\begin{figure}[h!]
   \adjustbox{scale=.9,center}
{\begin{tikzcd}
    1^+  \arrow[dr, shift left=.75ex, "{}^+\alpha"] & & 3^+ \arrow[dl, shift left=.75ex, "{}^+\delta"]\\
        &2 \arrow[ur, shift left=.75ex, "\beta^+"] \arrow[dr, shift left=.75ex, "\beta^-"] \arrow[ul, shift left=.75ex, "\gamma^+"] \arrow[dl, shift left=.75ex, "\gamma^-"]&\\
    1^- \arrow[ur, shift left=.75ex, "{}^-\alpha"] & & 3^- \arrow[ul, shift left=.75ex, "{}^-\delta"]    
\end{tikzcd}
}
\end{figure}
with sg-special cycles $({}^+\alpha)(\gamma^{+}), ({}^-\alpha)(\gamma^{-}), (\beta^+)({}^+\delta),  (\beta^-)({}^-\delta)$ and their permutations, and the ideal $I_{\Gamma^{\times}}^{\mathrm{sg}}$ is generated by all relations

\begin{enumerate}

\item $C-C'$, where $C$ and $C'$ are sg-special cycles that start at the same vertex, for example $(\gamma^{+})({}^+\alpha)- (\beta^-)({}^-\delta)$;
\item the paths of length 2 that start and end in two different vertices of the set $\{1^+, 1^-, 3^+, 3^-\}$, for example $({}^+\alpha)(\beta^-{})$;
\item the paths of length 3 of the form $C\eta$, where $C$ is a sg-special cycle, and $\eta$ is the first arrow of $C$, for example, $(\gamma^{+})({}^+\alpha)(\gamma^{+})$;

\item and finally, the paths of length 3 of the form $(\eta_1^{\varepsilon})({}^{\varepsilon}\eta_2)(\eta_1^{\varepsilon'})$ where $(\eta_1^{\varepsilon})({}^{\varepsilon}\eta_2)$ is a sg-special cycle that starts at 2,  $\varepsilon, \varepsilon'$ are either $+,-$ and $\varepsilon\neq \varepsilon'$, for example $(\gamma^{+})({}^+\alpha)(\gamma^{-})$.
\end{enumerate}

Consider $\mathcal{D}=\{{}^+\alpha, \gamma^-,{}^+\delta, \beta^-\}$ a cut set  of $B^{\mathrm{sg}}$. The quotient algebra of $B^{\mathrm{sg}}$ induced by $\mathcal{D}$ is the algebra $A=KQ/I$  where the quiver $Q$ is

\begin{figure}[h!]
   \adjustbox{scale=.9,center}
{\begin{tikzcd}
    1^+   & & 3^+ \\
        &2 \arrow[ur, "\beta^+"]  \arrow[ul, swap, "\gamma^+"] &\\
    1^- \arrow[ur, swap, "{}^-\alpha"] & & 3^- \arrow[ul,  "{}^-\delta"]    
\end{tikzcd}
}
\end{figure}

and $I$ is the ideal generated by the relations: 
\[({}^-\alpha)(\gamma^{+}), ({}^-\alpha)(\beta^{+}), ({}^- \delta)(\gamma^+), ({}^-\delta)(\beta^+).\]

Finally, $A$ is isomorphic to the skew-gentle algebra $C=(\widetilde{Q}, \widetilde{I}, \widetilde{\textrm{Sp}})$, where $\tilde{Q}$ is the quiver

\[\begin{tikzcd}
     &  &   \\
1 \arrow[r,swap,"\alpha"] \arrow[out=60,in=120,loop  , distance=1.6em,swap, "f_1"] & 2 \arrow[r,swap, "\beta"] & 3\arrow[out=60,in=120,loop  , distance=1.6em, swap, "f_2"]
\end{tikzcd}\]
$\tilde{I}=\langle\alpha\beta\rangle$ is the ideal and $\widetilde{\textrm{Sp}}=\{f_1, f_2\}$ is the set of special loops.
\end{exmp}

\section{Admissible cuts, the repetitive algebra and reflections}\label{sec:reflections}

In this section we consider the non-admissible presentation of a skew-gentle algebra, and we describe the quiver of its repetitive algebra. We also describe the positive and negative reflections on a skew-gentle algebra. Moreover, we analyse the  good cuts over a skew-gentle algebra and the relationship between these two notions.

The following definition of repetitive algebra for skew-gentle algebras, in their non-admissible presentation, is an extension of the one given for basic algebras by Hughes and Waschb{\"u}sch.

\begin{defn} 
Let $A=\SG$ be a skew-gentle algebra, $\Qsp$ be the set of special loops and $\mathcal{M}$ be the set of sp-maximal paths in $A$. The repetitive algebra of $A$ is $\widehat{A}=K\widehat{Q}/\widehat{I}$ where $\widehat{Q}$ and $\widehat{I}$ are defined as follows
\begin{itemize}
    \item The set of vertices of $\widehat{Q}$ is the set $\{x[n]: x\in Q_0 \text{ and } n\in \mathbb{Z}\}$.
    We say that $x[n]$ is a special vertex in $\widehat{Q}$ if an only if $x$ is a special vertex in $Q$ and $n\in \mathbb{Z}$.
    \item The set of arrows $\widehat{Q}_1$ is the set $\{\alpha[n]: x[n]\to y[n]: \alpha\in Q_1\text{ and } n\in\mathbb{Z}\}$.
    \item Each $sp$-maximal path $p: x\to y$ induces a set of \textit{connecting arrow} $\overline{p}[n]: y[n]\to x[n+1]$ for all $n\in \mathbb{Z}$.
\end{itemize}
To define the set of relations $\widehat{\rho}$ of $\widehat{Q}$, we need to introduce the following definition. If $p=p_qp_2$ is a sg-maximal path in $A$, then $p_2[n]\overline{p}[n]p_1[n+1]$ is a sg\emph{-full path} in $\widehat{Q}$.

The set of relations $\widehat{\rho}$ of $\widehat{Q}$ are given by the following list.
\begin{enumerate}
    \item Each monomial relation $p\in\rho$ induces a path $p[n]$ in $\widehat{\rho}$, for all $n\in\mathbb Z$.
    \item Each relation $f^2-f$ in $\rho$ induces a relation $f^2[n]-f[n]$  in $\widehat{\rho}$, for all $n\in\mathbb Z$.
    \item Any path $p$ in $\widehat{Q}$ which contains a connecting arrow is an element of $\widehat{\rho}$ if $p$ is nor a sub-path of a sg-full subpath.
\end{enumerate}
\end{defn}

\begin{lem}\label{quivers algebras repetitivas}
Let $A=\SG$ be a skew-gentle algebra and denote by  $A^{\mathrm{sg}}$  its admissible presentation. Let $A'$ be the auxiliary gentle algebra of $A$. Then, the quiver $Q_{\widehat{A}^{\mathrm{sg}}}$ of the repetitive algebra of $A^{\mathrm{sg}}$ coincides with the $\mathrm{sg}$-quiver $Q^{\mathrm{sg}}_{\widehat{A'}}$ of the repetitive algebra of $A'$, considering the set of special vertices in $Q_{\widehat{A'}}$ as the set $\{x[n]: x\in\Qsp \text{ and } n\in\mathbb{N}\}$.
\end{lem}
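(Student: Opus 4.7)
The strategy is to verify that the three types of data defining a quiver of a repetitive algebra (vertices, level-$n$ arrows, and connecting arrows) match on both sides. Since $\widehat{A^{sg}}$ is constructed directly from the admissible algebra $A^{sg}$ via the recipe of Section~\ref{sec:repetitive}, and $Q^{sg}_{\widehat{A'}}$ is obtained by first building $\widehat{A'}$ (a standard repetitive algebra of a gentle algebra) and then applying the sg-construction of Definition~\ref{definition-admissible quiver} with distinguished set $\{x[n] : x\in Sp,\; n\in\mathbb{Z}\}$, the proof amounts to checking that the sg-construction and the repetitive construction commute at the level of the underlying quiver.

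First I would compare the vertex sets. The vertices of $\widehat{A^{sg}}$ are indexed by $Q^{sg}_0\times\mathbb{Z}$, which splits as $\{x[n] : x\notin Sp\}\cup\{x^+[n], x^-[n] : x\in Sp\}$. On the other side, vertices of $\widehat{A'}$ are indexed by $Q'_0\times\mathbb{Z}$; after applying the sg-construction with the distinguished set above, each $x[n]$ with $x\in Sp$ gets duplicated to $x^{\pm}[n]$, producing the identical set of vertices. Next, for each fixed $n$, the arrows of $\widehat{A^{sg}}$ at level $n$ are exactly the copies of $Q^{sg}_1$, and the arrows of $\widehat{A'}$ at level $n$ are exactly the copies of $Q'_1$; applying the sg-construction to the latter gives $Q^{sg}_1$-copies at level $n$, by the very definition of $Q^{sg}$ from $Q'$. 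So these match.

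The heart of the proof lies in matching the connecting arrows. By Lemma~\ref{lem: paths Asg-A'}(b), there is a natural correspondence between maximal paths $p$ of $A'$ and the equivalence classes of maximal paths ${}^{\varepsilon} p^{\varepsilon'}$ of $A^{sg}$, where the admissible choices of signs $\varepsilon,\varepsilon'\in\{+,-,\emptyset\}$ are exactly those consistent with whether $s(p)$ and $t(p)$ are special vertices. On the $\widehat{A^{sg}}$ side, each such ${}^{\varepsilon} p^{\varepsilon'}$ contributes a connecting arrow $y^{\varepsilon'}[n]\to x^{\varepsilon}[n+1]$ for every $n\in\mathbb{Z}$. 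On the $Q^{sg}_{\widehat{A'}}$ side, the single maximal path $p$ produces the connecting arrow $\overline{p}[n] : y[n]\to x[n+1]$ in $\widehat{Q'}$, which the sg-construction then duplicates according to exactly the same set of admissible sign pairs, since the construction of Definition~\ref{definition-admissible quiver} only depends on which endpoints are distinguished and the distinguished vertices are the copies of $Sp$ in $\widehat{Q'}$. Thus the resulting sets of connecting arrows coincide.

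The main delicate point is to ensure that the correspondence of Lemma~\ref{lem: paths Asg-A'}(b) matches the duplication of $\overline{p}[n]$ \emph{one-to-one}, in particular that the multiplicities of new arrows agree in the four cases (both endpoints special, only source special, only target special, neither special). This is a bookkeeping check with no structural difficulty once Lemma~\ref{lem: paths Asg-A'} is in hand, and after it the statement follows. The argument is symmetric in $n$, so passing from level $n$ to all of $\mathbb{Z}$ is immediate.
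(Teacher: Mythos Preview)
Your proposal is correct and follows essentially the same approach as the paper's proof. The paper's argument is a two-line reference to Lemma~\ref{lemma:quiver of trivial extension} together with the correspondence between maximal paths in $A'$ and those in the skew-gentle setting; your write-up unpacks exactly this, comparing vertices, level-$n$ arrows, and connecting arrows directly and invoking Lemma~\ref{lem: paths Asg-A'}(b) for the last step, which is precisely the content underlying Lemma~\ref{lemma:quiver of trivial extension}.
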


\begin{proof}
 It follows from Lemma \ref{lemma:quiver of trivial extension} and that the sg-maximal paths in $A$ are in correspondence with maximal paths in $A'$.
\end{proof}

\begin{exmp}\label{ex:repetitive algebra}
Let $A=\SG$ be the skew-gentle algebra with quiver $Q$
\[\begin{tikzcd}
1 \arrow[out=120,in=60,loop, looseness=4, "f_1"] \arrow[r, "\alpha"]& 2 \arrow[r,"\beta"]  & 3 \arrow[out=120,in=60,loop,looseness=4, "f_2"] \end{tikzcd}\]
ideal $I= \langle \alpha \beta \rangle$ and set of special vertices $\Qsp=\{1,3\}$.

To compute the repetitive algebra of $A$, we need to find the sg-maximal paths in $A$. In this case, there are exactly two sg-maximal path, namely $p_1=f_1\alpha$ and $p_2=\beta f_2$. Then by the definition, the quiver $Q_{\widehat{A}}$ of repetitive algebra of the algebra $A$ is the following.

{\tiny{\[\begin{tikzcd}
\dots\arrow[r, out=80, in=125, "\overline{p_1}"]  & 1[-1] \arrow[out=110,in=60,loop, looseness=4, "f_1"] \arrow[r, "\alpha"]& 2[-1] \arrow[r,"\beta"] \arrow[rr, out=60, in=125, "\overline{p_1}"]  & 3[-1] \arrow[out=110,in=60,loop,looseness=4, "f_2"] \arrow[rr, out=-60, in=-110, swap,"\overline{p_2}"] & 
1[0] \arrow[out=110,in=60,loop, looseness=4, "f_1"] \arrow[r, "\alpha"]& 2[0] \arrow[rr, out=60, in=125, "\overline{p_1}"] \arrow[r,"\beta"]  & 3[0] \arrow[rr, out=-60, in=-110, swap,"\overline{p_2}"]\arrow[out=110,in=60,loop,looseness=4, "f_2"]  &
1[1]  \arrow[out=110,in=60,loop, looseness=4, "f_1"] \arrow[r, "\alpha"]& 2[1] \arrow[r,"\beta"]  & 3[1]\arrow[out=110,in=65,loop,looseness=4, "f_2"] \arrow[r, out=-60, in=-110, swap,"\overline{p_2}", no head] & \dots 
\end{tikzcd}\]}}

While the quiver of the repetitive algebra of the admissible presentation $A^{\mathrm{sg}}$ of $A$
{\tiny{
\[
  \begin{tikzcd}
  {} \arrow[rrd,swap, "{}^+\overline{p_2}"]&1^+[-1] \arrow[rd, "{}^+\alpha"] & & 3^+[-1]  \arrow[rrd, swap, "{}^+\overline{p_2}"] & 1^+[0] \arrow[rd, "{}^+\alpha"]
   & & 3^+[0] \arrow[rrd,swap, "{}^+\overline{p_2}"]  & 1^+[1] \arrow[rd, "{}^+\alpha"]
   & &  3^+[1] \arrow[rd, no head] &{} 
    \\
 \dots & &2[-1] \arrow[rrd, out=-70, in=-110, swap,"\overline{p_1}^-"]\arrow[rru, out=70, in=125, "\overline{p_1}^+"] \arrow[ru, "\beta^{+}"] \arrow[rd, swap,"\beta^{-}"] &&&2[0] \arrow[rrd, out=-75, in=-110, swap,"\overline{p_1}^-"] \arrow[rru, out=75, in=125, "\overline{p_1}^+"]  \arrow[ru, "\beta^{+}"] \arrow[rd, swap,"\beta^{-}"] &&&2[1] \arrow[rrd, out=-75, in=-110, swap,"\overline{p_1}^-"] \arrow[rru, out=75, in=125, "\overline{p_1}^+"]  \arrow[ru, "\beta^{+}"] \arrow[rd, swap,"\beta^{-}"] &&\dots\\
{} \arrow[rru, "{}^-\overline{p_2}"]&1^-[-1] \arrow[ru,swap, "{}^-\alpha"]  & & 3^-[-1] \arrow[rru,"{}^-\overline{p_2}"] & 1^-[0] \arrow[ru, swap, "{}^-\alpha"]
   &&  3^-[0] \arrow[rru,"{}^+\overline{p_2}"] & 1^-[1] \arrow[ru, swap, "{}^-\alpha"]
   && 3^-[1] \arrow[ru, no head] &{}
  \end{tikzcd}
   \]}}

\noindent One can check that the set of relations coincides with the relations induced by the sg-ideal of the repetitive algebra $\widehat{A}$.
\end{exmp}

Next result shows when reflections on a skew-gentle algebra return a skew-gentle algebra.

\begin{thm}\label{thm: refl-skew}
Let $A=\SG$ be a skew-gentle algebra and $x$ be a source in its auxiliary gentle algebra $A'$.
If $x$ is a special vertex (resp. non special vertex), then the negative reflection sequence $S^-_{x^+}S^-_{x^-}(A^{\mathrm{sg}})$ (resp. the negative reflection $S^-_{x}(A^{\mathrm{sg}})$) is skew-gentle.

A similar result follows when $x$ is a sink in $A'$ if we consider positive reflections.
\end{thm}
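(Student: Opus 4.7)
The plan is to realize the reflections inside the repetitive algebra and transfer everything through the identification given by Lemma~\ref{quivers algebras repetitivas}, which says that $Q_{\widehat{A^{sg}}}$ coincides with the sg-quiver of $Q_{\widehat{A'}}$ with distinguished vertex set $Sp$.

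First I would verify that the prescribed reflections are valid. When $x \notin Sp$, the vertex $x$ is unaltered by Definition~\ref{definition-admissible quiver}, and arrows ending at $x$ in $Q^{sg}$ are in bijection with those ending at $x$ in $Q'$; hence $x$ is a source in $Q_{A^{sg}}$ as well. When $x \in Sp$, the duplicates $x^+, x^-$ are both sources in $Q^{sg}$ (neither receives any arrow, and there is no arrow between them), so the sequence $S^-_{x^+}S^-_{x^-}$ is defined and independent of the order of composition.

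Second, under the identification $Q_{\widehat{A^{sg}}} = Q^{sg}_{\widehat{A'}}$, the complete $\nu$-slice obtained after reflecting at $x$ (if $x \notin Sp$) or simultaneously at $x^+$ and $x^-$ (if $x \in Sp$) is exactly the sg-quiver of $\sigma^-_x(Q_{\widehat{A'}})$ with distinguished vertex set $Sp$. Since reflecting a gentle algebra at a source yields a gentle algebra, $S^-_x(A')$ is gentle; together with Remark~\ref{rmk:change of quivers}, the reflected quiver is the quiver of the admissible presentation of a candidate skew-gentle algebra whose auxiliary gentle algebra is $S^-_x(A')$ and whose set of special vertices is $Sp$. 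It then suffices to identify the ideal of the reflected algebra with the sg-ideal of this candidate.

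To match the ideals, I would use Lemma~\ref{lem: paths Asg-A'} to lift non-zero paths between $A'$ and $A^{sg}$, and Lemma~\ref{lem:maximal-paths AyA'} to identify sp-maximal paths with maximal paths. This gives a bijection between the relations coming from $\widehat{A^{sg}}$ restricted to the new slice and the four families (a)--(d) in Definition~\ref{dfn:sg-ideal} applied to $S^-_x(A')$ and $Sp$. The main obstacle is the verification of the type (a) commutative relations: after reflection, an arrow originally leaving a duplicated vertex becomes a connecting arrow entering the reflected copy, and one must check that the commutativity enforced by the sg-construction on $S^-_x(A')$ matches the commutativity enforced by the repetitive structure of $\widehat{A^{sg}}$. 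This reduces to a finite case analysis depending on the configurations of arrows at $x$ in $Q^{sg}$ and at the vertices that become its neighbours in the reflected quiver. The positive reflection case when $x$ is a sink then follows by a dual argument.
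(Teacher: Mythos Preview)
Your proposal is correct and follows essentially the same route as the paper: both arguments hinge on Lemma~\ref{quivers algebras repetitivas} to identify the reflected slice in $\widehat{A^{sg}}$ with the sg-quiver of the reflected slice in $\widehat{A'}$, then use that reflecting a gentle algebra at a source yields a gentle algebra, so $(S^-_x(A'))^{sg}$ is skew-gentle. The paper's proof is considerably more terse than yours---it asserts the isomorphism $S^-_{x^+}S^-_{x^-}(A^{sg}) \cong (S^-_x(A'))^{sg}$ without spelling out the ideal comparison or the validity of the reflection sequence---so the additional checks you outline (sources remaining sources, the type (a) relations surviving reflection) are not redundant but rather fill in details the paper leaves implicit.
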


\begin{proof}
    Let $x$ be a source of $Q_0$. If $x\in \Qsp$ or $x\notin \Qsp$, in both cases it follows from Lemma \ref{quivers algebras repetitivas} that the sg-quiver $(\sigma_xQ_{A'})^{\mathrm{sg}}$ coincides with the quivers $\sigma_{ x^+}\sigma_{x^-}Q_{A^{\mathrm{sg}}}$ and $\sigma_{ x}Q_{A^{\mathrm{sg}}}$, respectively. 
    It is easy to see that the algebra $S^-_x(A')$ is a gentle algebra, where $x$ is a sink vertex for its ordinary quiver.  Therefore the sg-algebra $(S^-_x(A'))^{\mathrm{sg}}$  is skew-gentle, which is isomorphic to  $S_{x^+}S_{x^-}(A)$ and  $S_{x}(A)$, respectively.
\end{proof}

The following example shows that certain sequences of reflections over a skew-gentle algebra, not defined as in the previous theorem, can also result on a skew-gentle algebra.

\begin{exmp}
Let $A=\SG$ be the skew-gentle algebra from Example~\ref{ex:repetitive algebra}.

By the Theorem \ref{thm: refl-skew}, the algebras $S_{1^{+}}^-S_{1^-}^-(A^{\mathrm{sg}})$ and $S_{3^{+}}^+S_{3^-}^+(A^{\mathrm{sg}})$ are skew-gentle. We note that the algebra $S_{3^-}^+S_{1^-}^-(A^{\mathrm{sg}})$ is also skew-gentle, but  it is not obtained by a sequence of reflections as described in Theorem \ref{thm: refl-skew}. 
\end{exmp}

To show the link between reflections and admissible cuts, we need to use the following result, which is part of a work in progress \cite{ABFGMRT24}. For the benefit of the reader, we include the proof of this result.

\begin{lem}[\cite{ABFGMRT24}]\label{lem:cuts}
    Let $A=KQ/I$ be a gentle algebra,  $T(A)$ its trivial extension, and $x$ a source vertex of $A$. Then, there exists an admissible cut $\mathcal{D}$ of $B$ such that:
    \begin{enumerate}
        \item $S^-_x(A)$ is isomorphic to the quotient $T(A)/\langle \mathcal D \rangle$.
        \item the admissible cut $\mathcal{D}$ contains the arrows in $Q$ that start at $x$; in other words,  $\mathcal{D}\cap Q_1=\{\alpha\in Q_1\mid s(\alpha)=x\}$.
    \end{enumerate}
\end{lem}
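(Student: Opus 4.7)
The plan is to exhibit the admissible cut $\mathcal{D}$ explicitly and then check both claims directly. The key structural input, due to Schroll, is that for a gentle algebra $A$ the set of maximal paths forms a basis of $\operatorname{soc}_{A^e}(A)$, so that up to cyclic equivalence the elementary cycles in $Q_{T(A)}$ are in bijection with the maximal paths $p$ of $A$, each consisting of the arrows of $p$ together with its connecting arrow $\beta_p$. Since $x$ is a source of $Q_A$, no arrow of $Q_A$ ends at $x$, hence every maximal path $p$ either starts at $x$ or avoids $x$ entirely; moreover, because $A$ is gentle, each arrow $\alpha \in Q_1$ with $s(\alpha) = x$ is the first arrow of a unique maximal path, and distinct arrows out of $x$ initiate distinct maximal paths. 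I define
\[
\mathcal{D} \;=\; \{\alpha \in Q_1 : s(\alpha) = x\} \;\cup\; \{\beta_p : p \text{ maximal path with } s(p) \neq x\}.
\]

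To see that $\mathcal{D}$ is admissible, I verify that it meets each elementary cycle in exactly one arrow. For a maximal path $p$ with $s(p) = x$, the associated elementary cycle contains $\beta_p$ (not in $\mathcal{D}$ by construction) and the arrows of $p$: its first arrow lies in $\mathcal{D}$, while subsequent arrows of $p$ cannot start at $x$ (otherwise $p$ would revisit the source $x$) and so are not in $\mathcal{D}$. For a maximal path $p$ with $s(p) \neq x$, the path avoids $x$ entirely, so none of its arrows lie in $\mathcal{D}$, while $\beta_p \in \mathcal{D}$. Thus $\mathcal{D}$ contains exactly one arrow per elementary cycle, and property (2) holds by construction.

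For (1), I identify $T(A)$ with the orbit algebra $\widehat{A}/\langle \nu \rangle$ via the standard map sending the vertex $y[n]$ to $y$, each $\alpha[n]$ to $\alpha$, and each connecting arrow $\overline{p}[n]$ to $\beta_p$. Under this identification, the ideal $\langle \mathcal{D}\rangle$ in $T(A)$ kills precisely the arrows $\alpha[0]$ with $s(\alpha) = x$ together with the connecting arrows $\overline{p}[0]$ with $s(p) \neq x$. The arrows surviving in $T(A)/\langle \mathcal{D}\rangle$ are therefore the arrows of $\widehat{A}$ between the vertices of the reflected slice $\sigma_x^- Q_A = \{y[0] : y \neq x\} \cup \{x[1]\}$: namely the $\alpha[0]$ with $s(\alpha) \neq x$, and the connecting arrows $\overline{p}[0] : t(p)[0] \to x[1]$ for maximal paths $p$ starting at $x$. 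Hence the quotient has the quiver $\sigma_x^- Q_A$ and, by construction, its relations are those induced from $\widehat{A}$ on this slice, so that
\[
T(A)/\langle \mathcal{D}\rangle \;\cong\; \operatorname{End}_{\widehat{A}}\!\Bigl(\bigoplus_{z \in \sigma_x^- Q_A} e_z \widehat{A}\Bigr) \;=\; S_x^-(A).
\]

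The main obstacle in making this rigorous is matching the relations of $T(A)/\langle\mathcal{D}\rangle$ with those of $S_x^-(A)$. One must carry out a finite case analysis on the three types of generators of $I_{T(A)}$ given by Theorem~\ref{ideal TA}: relations inherited from $I_A$, paths not contained in any elementary cycle, and supplement relations. One must check that each either reduces to a defining relation of $\widehat{A}$ on the slice $\sigma_x^- Q_A$, or becomes zero because it factors through an arrow of $\mathcal{D}$. The gentle hypothesis makes this tractable, since every non-zero path factors uniquely through a maximal path and hence the bookkeeping reduces to tracking which maximal paths visit the source $x$.
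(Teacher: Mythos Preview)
Your explicit construction of $\mathcal{D}$ is correct and in fact gives more information than the paper records, since you pin down \emph{all} of $\mathcal{D}$ rather than only $\mathcal{D}\cap Q_1$. However, your route diverges substantially from the paper's. The paper argues as follows: by \cite{HW83} one has $T(A)\cong T(S_x^-(A))$, and then the recognition theorem of \cite{FSTTV22} guarantees abstractly the existence of an admissible cut $\mathcal{D}$ with $T(A)/\langle\mathcal{D}\rangle\cong S_x^-(A)$; part~(2) is then a one-line check comparing the arrow sets of $Q_A$ and $Q_{S_x^-(A)}=\sigma_x^-Q_A$. In particular the paper never touches the relations of the quotient, which is precisely the ``main obstacle'' you flag at the end. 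Your approach trades a dependence on those two cited results for a direct, constructive argument; the price is that the relation-matching you defer is genuinely needed to finish, and your passage from $T(A)/\langle\mathcal{D}\rangle$ to the slice algebra conflates a quotient of $\widehat{A}/\langle\nu\rangle$ with a full subcategory of $\widehat{A}$, so that step would require some care to make precise. Both approaches are valid; the paper's is shorter because it outsources the hard work, while yours is more self-contained and yields an explicit description of the cut.
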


\begin{proof}
Let $x$ be a source vertex in $A$. We denote by $B=S^-_x(A)$ the negative reflection of $A$.
By \cite{HW83}, we have that the trivial extension algebra $T(A)$ of $A$ coincides with the trivial extension algebra $T(B)$ of $B$, and moreover, by \cite{FSTTV22}, there exists an admissible cut $\mathcal{D}$ of $T(A)$ such that $B=T(A)/<\mathcal{D}>$.

To prove the second statement, let $\gamma: y\to z$ be an element of $\mathcal{D}\cap (Q)_1$. Suppose  $s(\gamma)\neq x$. By construction, $(\gamma,0):(y,0)\rightarrow (z,0)$ is an arrow of $\sigma^-_x(Q_{A})=Q_B$, contradicting the fact that $\gamma$ is in $\mathcal{D}$. Therefore $s(\gamma)=x$.

Conversely, if $\gamma$ is an arrow of $Q_1$ such that $s(\gamma)=x$, since the vertex $x$ is a sink of $Q$ we have that $\gamma\notin (Q_{B})_1$ and consequently $\gamma\in \mathcal{C}$.
\end{proof}

\begin{thm}\label{thm:cuts and reflections}
   Let $A=\SG$ be a skew-gentle algebra and let $x$ be a source vertex in the auxiliary gentle algebra $A'$ of $A$. If $x$ is a special vertex (resp. $x$ is not a special vertex), then, $S^{-}_{x^+}S^{-}_{x^-}(A^{\mathrm{sg}})=T(A^{\mathrm{sg}})/\langle\mathcal{D}\rangle$ (resp. $S^{-}_{x}(A^{\mathrm{sg}})=T(A^{\mathrm{sg}})/\langle\mathcal{D}\rangle$) where $\mathcal{D}$ is the good cut of $T(A^{\mathrm{sg}})$ induced by the cut set $\mathcal{D}'$ such that $$\mathcal{D}'\cap (Q_{A'})_1=\{\alpha \in (Q_{A'})_1\mid s(\alpha)=x\}.$$
\end{thm}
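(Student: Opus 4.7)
The plan is to reduce everything to the gentle case by working with the auxiliary gentle algebra $A'$, apply Lemma~\ref{lem:cuts} there, and then transfer the result to the skew-gentle setting using the compatibility between trivial extensions and $sg$-presentations developed in Theorem~\ref{teo:ideal relaciones}, together with the identification of reflections and $sg$-presentations given in Theorem~\ref{thm: refl-skew}.

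First, since $A'$ is gentle and $x$ is a source of $Q_{A'}$, Lemma~\ref{lem:cuts} provides an admissible cut $\mathcal{D}'$ of $T(A')$ such that $S^{-}_x(A')\cong T(A')/\langle\mathcal{D}'\rangle$ and $\mathcal{D}'\cap (Q_{A'})_1=\{\alpha\in (Q_{A'})_1\mid s(\alpha)=x\}$. Let $\mathcal{D}$ be the good cut of $T(A^{sg})$ induced by $\mathcal{D}'$ as in Definition~\ref{def:good cut}; this is well defined by Theorem~\ref{teo:ideal relaciones}, which identifies $T(A^{sg})$ with the $sg$-bound quiver algebra of $T(A')$ (with distinguished vertices $Sp$ and distinguished cycles the elementary cycles of $T(A')$). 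In particular, $\mathcal{D}$ contains exactly the arrows $^{\varepsilon}\alpha^{\varepsilon'}$ in $Q^{sg}_{T(A')}$ arising from arrows $\alpha\in\mathcal{D}'$, and these are in bijection with the $sg$-elementary cycles of $T(A^{sg})$ through the two-to-one (or one-to-one, or four-to-one) correspondence of Lemma~\ref{lem: elementary cycles}.

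Next, I would show that taking the $sg$-presentation commutes with cutting, namely
\[
T(A^{sg})/\langle\mathcal{D}\rangle\;\cong\;\bigl(T(A')/\langle\mathcal{D}'\rangle\bigr)^{sg}.
\]
At the level of quivers, this is immediate from Lemma~\ref{lemma:quiver of trivial extension}: $Q_{T(A^{sg})}=Q^{sg}_{T(A')}$, and deleting all lifts $^\varepsilon\alpha^{\varepsilon'}$ of each $\alpha\in\mathcal{D}'$ yields $(Q_{T(A')}\setminus\mathcal{D}')^{sg}$. At the level of ideals, one rewrites the presentation of $I_{T(A^{sg})}$ given by Theorem~\ref{teo:ideal relaciones} modulo $\mathcal{D}$: the Type~(a) commutative relations survive unchanged; each Type~(b) relation $C_1-C_2$ coming from an elementary cycle cut by some $\alpha\in\mathcal{D}'$ reduces modulo $\mathcal{D}$ to relations already present in $I'_{T(A')/\langle\mathcal{D}'\rangle}$; and the Type~(c),~(d) monomial relations match those produced by Definition~\ref{dfn:sg-ideal} applied to $T(A')/\langle\mathcal{D}'\rangle$. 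This is the technical verification and is the step I expect to be the main obstacle, since one must carefully match each family of generators on both sides.

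Finally, I would combine the previous step with Theorem~\ref{thm: refl-skew}. Since $S^{-}_x(A')\cong T(A')/\langle\mathcal{D}'\rangle$, passing to the $sg$-bound quiver algebra on both sides gives
\[
\bigl(S^{-}_x(A')\bigr)^{sg}\;\cong\;T(A^{sg})/\langle\mathcal{D}\rangle.
\]
The proof of Theorem~\ref{thm: refl-skew} identifies $(S^{-}_x(A'))^{sg}$ with $S^{-}_{x^+}S^{-}_{x^-}(A^{sg})$ when $x\in Sp$ and with $S^{-}_{x}(A^{sg})$ when $x\notin Sp$, via Lemma~\ref{quivers algebras repetitivas} which matches $sg$-slices in the repetitive algebra $\widehat{A^{sg}}$ with slices in $\widehat{A'}$. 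Composing these isomorphisms yields the desired equalities and completes the proof.
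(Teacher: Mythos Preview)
Your proposal is correct and follows essentially the same route as the paper: apply Lemma~\ref{lem:cuts} to the auxiliary gentle algebra $A'$ to obtain the cut $\mathcal{D}'$, lift it to the good cut $\mathcal{D}$ of $T(A^{sg})$, and then invoke Theorem~\ref{thm: refl-skew} to identify the resulting quotient with the reflection. The paper's proof is considerably more terse and does not spell out the commutation $T(A^{sg})/\langle\mathcal{D}\rangle\cong\bigl(T(A')/\langle\mathcal{D}'\rangle\bigr)^{sg}$ that you isolate as the main technical step; it simply asserts that the result follows from Theorem~\ref{thm: refl-skew} once the good cut is in hand, implicitly relying on the identifications of Theorem~\ref{teo:ideal relaciones} and Lemma~\ref{quivers algebras repetitivas} exactly as you do.
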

\begin{proof}
Let $x$ be a source vertex in $A'$ and suppose that $x$ is a special vertex of $A$. We denote by $B=S^-_{x^+}S^-_{x^-}(A^{\mathrm{sg}})$ the negative reflection sequence of $A^{\mathrm{sg}}$. By Theorem \ref{thm: refl-skew} we know that $B$ is a skew-gentle algebra and $B'=S^{-1}_x(A')$ is the auxiliary gentle algebra of $B$.

By Lemma~\ref{lem:cuts}, there exists an admissible cut $\mathcal{D}$ of $A'$ such that $\mathcal{D}'\cap (Q_{A'})_1=\{\alpha \in (Q_{A'})_1\mid s(\alpha)=x\}$, which induces a good cut set $\mathcal D$ in $T(A^{\mathrm{sg}})$. The result follows from Theorem~\ref{thm: refl-skew}.\end{proof}
   
\section{Skew-Brauer graph algebras of finite representation type}\label{sec:finrep}

In this section, we show that the class of skew-Brauer graph algebras of finite representation type are characterized by the structure of the skew-Brauer graph as a graph and the number of special vertices on it. {\it In the remainder of this section, we consider non-trivial skew-Brauer graphs $\Gamma^{\times}$, that is, with at least one special vertex.}

\begin{defn}
A skew-Brauer graph $\Gamma^\times=( \Gamma'_0, \Gamma_0^{\times}, \Gamma_1,  \mf, \of)$, with $\Gamma_0^{\times}\neq \emptyset$, is a \emph{skew-Brauer tree} if the underlying graph $(\Gamma'_{0}\cup\Gamma_0^\times, \Gamma_1)$ is a tree, $\mf(v)=1$ for all $v \in \Gamma'_0 \cup \Gamma_0^{\times}$, there is exactly one special vertex in $\Gamma^\times$, and $\Gamma$ has at least two edges. We say that $B_{\Gamma^{\times}}$ is a {\it skew-Brauer tree algebra} if $\Gamma^{\times}$ is a skew-Brauer tree.
\end{defn} 

We exclude from the previous definition the skew-Brauer graph consisting of exactly one edge and one special vertex, since in this case, $B_{\Gamma^{\times}}$ is isomorphic the direct sum two copies of $\mathbb{A}_1$.

Recall that a \emph{Brauer tree algebra} is a Brauer graph algebra defined by a graph $\Gamma$ that is a tree, with $\mf(v) = 1$ for all but at most one $v \in \Gamma_0$. Since skew-Brauer graph algebras are symmetric, we also recall the classification of symmetric finite representation type algebras.

\begin{lem}(\cite[Theorem 2.11]{aihara})\label{Thm: classification}
    A non-simple basic symmetric algebra is of finite representation type if and only if it is isomorphic to an algebra in the following mutually exclusive classes.

    \begin{enumerate}
        \item Trivial extensions of iterated tilted algebras of Dynkin type.
        \item Brauer tree algebras with exactly one vertex $v$ with multiplicity $\mf(v)>1$.
        \item Modified Brauer tree algebras with exactly one vertex $v$ with multiplicity $\mf(v)>1$.
\end{enumerate}
\end{lem}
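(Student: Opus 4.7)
The plan is to prove both implications of this classification result, which ultimately rests on the general classification of representation-finite self-injective algebras due to Riedtmann and Asashiba, specialized to the symmetric case.

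For the ``if'' direction, each of the three classes is shown to be representation-finite by separate arguments. For trivial extensions of iterated tilted algebras of Dynkin type, one invokes the classical Hughes--Waschb\"usch theorem: if $A$ is iterated tilted of Dynkin type then $\widehat{A}$ is locally representation-finite, and since $T(A) \cong \widehat{A}/\langle \nu \rangle$ is a finite orbit quotient, $T(A)$ is representation-finite. For Brauer tree algebras with exactly one exceptional vertex, I would invoke the derived equivalence (due to Rickard and Gabriel--Riedtmann) between any such algebra and the Brauer star algebra with the same exceptional multiplicity, which is a Nakayama algebra and hence representation-finite; derived equivalence preserves representation-finiteness for self-injective algebras. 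The modified Brauer tree case is handled analogously by reducing via derived equivalence to a standard normal form and checking finite type directly on the normal form.

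For the ``only if'' direction, I would proceed as follows. A symmetric algebra is in particular self-injective, so I would apply Riedtmann's theorem characterizing representation-finite self-injective algebras by their stable Auslander--Reiten quiver having the form $\mathbb{Z}\Delta/G$ for $\Delta$ a Dynkin diagram and $G$ an admissible automorphism group. Via Asashiba's derived equivalence classification of representation-finite self-injective algebras, every such algebra is derived equivalent to a standard form determined by the \emph{type} $(\Delta, f, t)$, where $f$ is the frequency and $t$ the torsion order. Since the algebra is symmetric, the Nakayama automorphism is inner, which constrains $t$ and forces the type to fall into one of the symmetric standard forms. I would then match these standard forms against the three classes: the ones of torsion order one correspond (via Happel's theorem) to trivial extensions of iterated tilted algebras of Dynkin type $\Delta$; the remaining symmetric standard forms, up to derived equivalence, are precisely the Brauer tree algebras with one exceptional vertex (for $\Delta = \mathbb{A}_n$-type behavior after folding) and the modified Brauer tree algebras (for $\mathbb{D}_n$-type behavior after folding). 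Mutual exclusivity follows by comparing invariants such as the quiver of the center or the structure of the stable Auslander--Reiten quiver.

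The main obstacle is the ``only if'' direction: invoking Riedtmann--Asashiba is a heavy black box, and translating the standard-form data $(\Delta, f, t)$ back into the concrete Brauer-graph-theoretic language of the three classes requires careful case analysis. In particular, distinguishing Brauer tree algebras from modified Brauer tree algebras at the level of standard forms, and correctly pairing the $\mathbb{D}$-type standard forms with modified Brauer trees (rather than with trivial extensions of $\mathbb{D}$-type iterated tilted algebras, which are covered by class (1) when $f \neq 1$), is the delicate bookkeeping step where one must be careful not to double-count. Since this is a known theorem attributed to Aihara, in practice I would simply cite the reference rather than reproduce this lengthy argument.
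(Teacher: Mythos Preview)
The paper does not prove this lemma at all: it is stated as a citation of \cite[Theorem 2.11]{aihara} and used as a black box in the subsequent arguments. Your final sentence --- that in practice one would simply cite the reference --- is exactly what the paper does, so your approach and the paper's coincide. The sketch you give of the Riedtmann--Asashiba machinery is reasonable background, but none of it appears in the paper, and it is not expected to.
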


In the previous lemma, the family of Brauer tree algebras with $\mf(v)=1$ for all $v$ coincides with the trivial extensions of iterated tilted algebras of Dynkin type $\mathbb{A}$, see \cite[Lemma 3.1]{WZ22}. The definition of Brauer graph algebras, Brauer tree algebras and modified Brauer tree algebras can be found in 
\cite[Section 2.3]{Sch18} and \cite[Section 2.8 and Section 3.6]{Sko2006}. We notice that modified Brauer tree algebras are not necessarily biserial, see for example the projective-injective $P_1$ in \cite[Example 3.7]{Sko2006}.

\begin{lem}\label{lema: distiguished vertices}
    Let $B_{\Gamma^{\times}}= KQ^{\mathrm{sg}}/I^{\mathrm{sg}}$ be a skew-Brauer graph algebra of finite representation type, where $\Gamma^{\times}$ is not a skew-Brauer graph with two edges. Then $\Gamma^{\times}$ has at most one special vertex. 
\end{lem}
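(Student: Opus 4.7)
My plan is to argue by contradiction: assume $\Gamma^\times$ has at least two distinguished vertices $v_1^\times, v_2^\times$ and at least three edges. Since $B_{\Gamma^\times}$ is symmetric by Theorem~\ref{thm: skew brauer simetricas} and of finite representation type by hypothesis, Lemma~\ref{Thm: classification} forces $B_{\Gamma^\times}$ into one of three mutually exclusive classes: (1) trivial extensions of iterated tilted algebras of Dynkin type, (2) Brauer tree algebras with exactly one multiplicity-$>1$ vertex, or (3) modified Brauer tree algebras with exactly one multiplicity-$>1$ vertex. I would rule out each class in turn.

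Classes (2) and (3) can be addressed simultaneously via a biseriality argument. Both consist of (possibly modified) Brauer tree algebras, which are biserial. On the other hand, by Remark~\ref{rem: projectives} and the explicit projective module description in Example~\ref{ex:projectives}, any skew-Brauer graph algebra with a non-empty set of distinguished vertices fails to be special biserial. More precisely, at a distinguished edge $e = \{v^\times, w\}$ the indecomposable projective $\widehat{P}_e$ has, in its radical, a layer that splits at the pair $i^+, i^-$ coming from the sg-construction; this produces a subfactor that is not uniserial on either the top or the socle side, ruling out biseriality. Hence $B_{\Gamma^\times}$ cannot belong to class (2) or (3), regardless of the multiplicity function.

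For class (1), I would split into two sub-cases. First, if $\mf \equiv 1$ on $\Gamma_0 \cup \Gamma_0^\times$, then Theorem~\ref{thm:admissiblecuts} identifies $B_{\Gamma^\times}$ with $T(A^{sg})$ for some skew-gentle algebra $A$, which must then be iterated tilted of Dynkin type. I would appeal to the known classification of skew-gentle algebras that are iterated tilted of Dynkin type: the gentle case (type $\mathbb{A}$) carries no distinguished vertex, and the skew-gentle type $\mathbb{D}$ case (as in \cite{Nor21}) admits exactly one distinguished vertex; exceptional types do not arise from skew-gentle algebras with more than two edges. This yields $|\Gamma_0^\times| \leq 1$, contradicting the assumption. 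Second, if some non-distinguished vertex $v$ has $\mf(v) > 1$, then the relations of type (IIa) and (IIb) in $I_{\Gamma^\times}$ force the presence of a non-trivial power of an sg-special cycle; the sg-duplications of this cycle at two distinguished edges produce a tame or wild subquotient of $B_{\Gamma^\times}$, which I would argue precludes finite representation type (for instance, by exhibiting an infinite family of string modules of unbounded length).

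The main obstacle I anticipate is the precise identification of skew-gentle iterated tilted algebras of Dynkin type in the first sub-case of class (1). This requires combining the classical classification for gentle algebras of Dynkin type with a careful handling of the duplication at special vertices, which is ultimately where the ``at most one distinguished vertex'' bound originates. A secondary, more routine, difficulty is making the biseriality obstruction in the handling of classes (2) and (3) fully rigorous via the explicit projective module structure in Example~\ref{ex:projectives}.
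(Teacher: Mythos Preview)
Your approach has a genuine gap and is far more indirect than what the paper does. The paper's argument is elementary and self-contained: given two distinguished vertices $v^\times,w^\times$, take a minimal edge-path in $\Gamma^\times$ joining them, and for each intermediate vertex $x_i$ extract the subpath of the special cycle $C_{x_i}$ running between the two edges of the path incident at $x_i$, alternating direction. Concatenating these subpaths yields a non-linearly oriented $\mathbb{A}_n$-type subquiver of $Q_\Gamma$ with no relations, whose endpoints are distinguished; after $sg$-duplication this becomes a $\widetilde{\mathbb{D}}$-type subalgebra of $B_{\Gamma^\times}$, contradicting finite representation type. No appeal to the classification of symmetric representation-finite algebras is needed.

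Your proposal, by contrast, runs into two concrete problems. First, your treatment of class~(3) is incorrect: modified Brauer tree algebras are \emph{not} biserial in general (the paper explicitly notes this, citing \cite[Example~3.7]{Sko2006}), so the biseriality obstruction does not apply. Moreover, Remark~\ref{rem: projectives} only asserts that skew-Brauer graph algebras need not be \emph{special} biserial, based on an example; it does not establish ``not biserial'' in the generality you need, and Lemma~\ref{lemma: excepcional cases} shows that some skew-Brauer graph algebras with distinguished vertices \emph{are} Brauer graph algebras. Second, your handling of class~(1) appeals to a classification of skew-gentle iterated tilted algebras of Dynkin type with a bound on the number of special vertices; but in this paper that statement is precisely what is established \emph{later} (Theorems~\ref{thm:finite representation type} and~\ref{thm:skew-brauer tree algebras}), via a chain of lemmas that includes the present one. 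So your argument is either circular or relies on an external result you have not pinned down.
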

\begin{proof}
    Let $B=B_{\Gamma^{\times}}$ be a skew-Brauer graph algebra of finite representation type. Suppose that $\Gamma^{\times}$ has at least two special vertices $v^{\times}$ and $w^{\times}$. By definition $\Gamma^{\times}$ is connected, then there exists a minimal path $\rho$ of edges in $\Gamma^{\times}$ that join $v^{\times}$ and $w^{\times}$ as depicted in the following figure.

    \begin{center}
        \begin{tikzcd}
            v^{\times} \arrow[r, no head,"a_1"] & x_1 \arrow[r, no head, "a_2"]& x_2 \arrow[r, no head, "a_3"]& \cdots \arrow[r, no head, "a_{k-1}"] & x_k \arrow[r, no head, "a_k"]& w^{\times}
        \end{tikzcd}
    \end{center}

We denote by $Q_{\Gamma}$ be the quiver of the auxiliary Brauer graph algebra. Recall that there is a unique special cycle $C_{x_i}$ in $Q_{\Gamma}$, up to cyclic equivalence, induced by the edges attached to the vertex $x_i\in \Gamma$, for $i=\{1, \dots, k\}$.

For each $i\in \{1, \dots, k\}$, we define a sub-path $\rho_i$ of the special cycle $C_{x_i}$ from the vertex  $a_{i+1}$ to $a_{i}$ if $i$ is odd or the sub-path from $a_{i}$ to $a_{i+1}$ if $i$ is even. Then the walk $\rho_1^{-1}\rho_2\dots \rho_k^{{(-1)}^{k}}$ defines a non-linearly oriented sub-quiver $Q'$ of type $A_n$ without relations in $Q_{\Gamma}$ such that the first and the last vertex are special. As a consequence $B$ contains an algebra of Dynkin type $\tilde{D}_n$ as a sub-algebra, which is a contradiction because $B$ is an algebra of finite representation type.\end{proof}

\begin{lem}\label{lema:quivers iguales}(\cite[II.3 Lemma 3.6]{assem2006}, \cite[Section 2.3]{Sch18} and \cite[Section 3.6]{Sko2006})  
\begin{enumerate}
    \item If two basic connected isomorphic algebras are defined by a bound quiver with an admissible ideal, then their quivers coincide.
   \item Let $KQ/I$ be a Brauer graph algebra defined by an admissible ideal, then for all $x\in Q_0$ we have that $x$ is the start (end) of exactly two arrows or $x$ is the start (end) of exactly one arrow.
   \item Let $KQ/I$ be a modified Brauer tree algebra defined by an admissible ideal, then for all $x\in Q_0$ we have that $x$ is the start (end) of exactly two arrows or $x$ the start (end) of exactly one arrow and $Q$ must contain exactly one loop.
\end{enumerate}
\end{lem}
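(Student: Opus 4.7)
All three assertions are classical; the proof amounts to assembling standard arguments from the cited references, so I describe what I would invoke for each.

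For part (1), I would appeal to the intrinsic characterization of the ordinary quiver of a basic, connected, finite-dimensional $K$-algebra $A$ over an algebraically closed field: the vertices are in bijection with any complete set $\{e_1,\dots,e_n\}$ of primitive orthogonal idempotents of $A$, and the number of arrows from $i$ to $j$ equals $\dim_K e_j(J/J^2)e_i$, where $J=\operatorname{rad}(A)$, as in \cite[II.3, Theorem 3.7]{assem2006}. Since the data $(e_i,J)$ is intrinsic to $A$, the quiver is itself an invariant of $A$, so two admissible bound-quiver presentations of the same basic connected algebra produce isomorphic quivers. The admissibility hypothesis is essential: non-admissible ideals can introduce arrows that become redundant modulo the relations, making two a priori different quivers present isomorphic algebras.

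For part (2), I would read off the arrow counts at each vertex directly from the construction of the Brauer quiver $Q_\Gamma$ recalled in Section 4. Each vertex $x$ of $Q_\Gamma$ corresponds to an edge $e$ of $\Gamma$ with two (possibly coinciding) endpoints $u,v\in \Gamma_0$. At each endpoint $w\in\{u,v\}$ with $\mf(w)\val(w)\geq 2$, the cyclic ordering $\of(w)$ selects a unique immediate successor of $e$, contributing exactly one outgoing arrow at $x$, and similarly a unique immediate predecessor, contributing exactly one incoming arrow. This bounds both counts by two. That the count cannot collapse to zero under the admissibility hypothesis follows because an isolated vertex of $Q$ would yield a simple direct summand, contradicting the connectedness inherent in the Brauer graph construction. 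The detailed case analysis is carried out in \cite[Section 2.3]{Sch18}.

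For part (3), the modified Brauer tree setting differs from (2) only by the presence of exactly one distinguished vertex $v^{*}$ with $\val(v^{*})=1$ and $\mf(v^{*})>1$. By the convention $\of(v^{*})=(x^{*},x^{*})$ on the unique edge $x^{*}$ incident to $v^{*}$, the Brauer-quiver construction produces a loop at the vertex of $Q$ corresponding to $x^{*}$, and the uniqueness of $v^{*}$ guarantees that this is the only loop in $Q$. At every vertex other than the one corresponding to $x^{*}$, the analysis of (2) applies verbatim, yielding one or two arrows in each direction; see \cite[Section 3.6]{Sko2006}. The main subtlety across parts (2) and (3), and the only genuinely non-tautological step, is ensuring that the arrow counts never collapse to zero, which requires combining the admissibility hypothesis with the global combinatorics of the Brauer graph; the remaining arguments are direct unpackings of the Brauer-quiver construction, and part (1) is a general fact from quiver representation theory.
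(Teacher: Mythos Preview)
The paper does not supply a proof of this lemma at all: it is stated purely as a compilation of cited facts from \cite{assem2006}, \cite{Sch18}, and \cite{Sko2006}, and the text immediately moves on to use it. Your proposal is therefore not competing with a proof in the paper but rather expanding on what those references contain, and for parts~(1) and~(2) your sketch is accurate and exactly the kind of argument one finds there.

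There is, however, a genuine inaccuracy in your treatment of part~(3). You describe a modified Brauer tree algebra as ``differing from (2) only by the presence of exactly one distinguished vertex $v^{*}$ with $\val(v^{*})=1$ and $\mf(v^{*})>1$.'' That is the description of an \emph{ordinary} Brauer tree algebra with an exceptional vertex (item~(2) in Lemma~\ref{Thm: classification}), not of a modified Brauer tree algebra. The modified Brauer tree algebras of \cite{Sko2006} are a separate, non-standard family: they share the same underlying quiver as the corresponding Brauer tree algebras, but the \emph{relations} are altered (a socle deformation), which is why they form a distinct class in the classification of symmetric algebras of finite type. Your conclusion about the quiver (at most two arrows in and out at each vertex, exactly one loop) is still correct precisely because the quiver is unchanged under this modification, but your stated reason for it conflates two different classes of algebras. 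Since the paper only needs the quiver-level statement and defers entirely to \cite{Sko2006}, this does not break anything downstream, but you should correct the description if you keep the expanded argument.
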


It follows from the previous lemma that any bound quiver algebra $A$ cannot be isomorphic to a Brauer graph algebra or a modified Brauer graph algebra if its quiver $Q_A$ has three or more arrows ending at (or starting from) any of its vertices.

In the following lemma, we describe some properties of a subclass of skew-Brauer graph algebras that are relevant to this section.

\begin{lem}\label{lemma: excepcional cases}
Let $B=B_{\Gamma_i^{\times}}$ be a skew-Brauer graph algebra associated with one of the following skew-Brauer graphs.
    
\begin{center}
\begin{tikzcd}
\Gamma_1^\times: &   \times \arrow[r, no head]  & v \arrow[r, no head]  &    w & {} & {} & &\Gamma_2^{\times}:& \times \arrow[r, no head]  & v \arrow[r, no head]  &    \times
\end{tikzcd}
\end{center}

 \begin{enumerate}
    \item If $B$ is either  $B_{\Gamma_1^\times}$ where $\mf(w)=1$ or $B_{\Gamma_2^\times}$, then $B$ is isomorphic to a Brauer graph algebra (for one of the graphs $\Delta_{1,2}$ in Figure~\ref{fig:excepcional cases}).
    
    \item If $B=B_{\Gamma_1^\times}$ with $\mf(w)\neq 1$, then $B$ is not of finite representation type.
\end{enumerate}
\end{lem}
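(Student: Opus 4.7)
For part (1), the plan is, in each case, to exhibit an explicit classical Brauer graph $\Gamma$ (with no distinguished vertex) and verify that $B_\Gamma \cong B_{\Gamma_i^\times}$ by matching the admissible quivers and the ideals of relations. For $\Gamma_1^\times$ with $\mf(w)=1$, I would take $\Gamma$ to be the linear Brauer tree with three edges and four vertices, all of multiplicity one; its Brauer quiver has the same three vertices and the same four arrows as the admissible skew-Brauer quiver $Q^{sg}_{\Gamma_1^\times}$. The isomorphism of ideals is then obtained by identifying the type $0$ relation $\beta^+{^+\alpha}=\beta^-{^-\alpha}$ of $B_{\Gamma_1^\times}$ with the type I relation at the Brauer-tree vertex corresponding to the edge $b$, and by checking that the type III relations ${^+\alpha}\beta^-=0={^-\alpha}\beta^+$ coincide on both sides. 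Crucially, in $B_{\Gamma_1^\times}$ the type $0$ together with type III force $(\beta^+{^+\alpha})^2=0$, so the algebra is independent of $\mf(v)$ and matches the $\mf\equiv 1$ Brauer line algebra. For $\Gamma_2^\times$, the analogous construction uses a linear Brauer tree with four edges; the two distinguished vertices of $\Gamma_2^\times$ produce two type $0$ relations that correspond to the type I relations at the two internal Brauer-tree vertices.

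For part (2), I would apply the classification of finite-representation-type symmetric algebras (Lemma~\ref{Thm: classification}) to $B=B_{\Gamma_1^\times}$, which is symmetric by Theorem~\ref{thm: skew brauer simetricas}, and show it belongs to none of the three listed classes. The hypothesis $\mf(w)>1$ gives $\mf(w)\val(w)\ge 2$, so the special cycle at $w$ exists and contributes a loop $\lambda$ at the quiver vertex $b$. Combined with the arrows ${^+\alpha},{^-\alpha},\beta^+,\beta^-$ induced by the special cycle at $v$, the vertex $b$ becomes the start of three arrows and the end of three arrows. By Lemma~\ref{lema:quivers iguales}(2)--(3), this quiver structure is incompatible with Brauer tree algebras and modified Brauer tree algebras presented by admissible ideals, so $B$ is not in classes (2) or (3).

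The remaining task, and the main obstacle, is to exclude class (1): trivial extensions of iterated tilted algebras of Dynkin type. The plan is to use that $B$ has exactly three simple modules, one for each vertex of $Q^{sg}_{\Gamma_1^\times}$, which immediately rules out trivial extensions of iterated tilted types $\mathbb{D}_n$ with $n\ge 4$ and $\mathbb{E}_6,\mathbb{E}_7,\mathbb{E}_8$ by a count of simples. The only remaining possibility is a trivial extension of an iterated tilted algebra of type $\mathbb{A}_3$, which is known to be the Brauer line algebra on three edges with $\mf\equiv 1$, whose ordinary quiver has no loops; since the quiver of $B$ contains the loop $\lambda$, $B$ is not such a trivial extension. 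This exhausts the three classes of Lemma~\ref{Thm: classification} and yields that $B$ is of infinite representation type.
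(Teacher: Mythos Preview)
Your approach to part~(1) for $\Gamma_1^\times$ with $\mf(w)=1$ is essentially the paper's: both exhibit the linear Brauer tree on three edges and match quivers and relations. Your observation that the type~0 and type~III relations force $C_{v,l}^2=0$, making the algebra independent of $\mf(v)$, is also exactly what the paper uses.

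However, your choice of Brauer graph for $\Gamma_2^\times$ is wrong. The admissible skew-Brauer quiver $Q^{sg}_{\Gamma_2^\times}$ has four vertices $1^+,1^-,2^+,2^-$ (both edges are distinguished, so both are duplicated) and eight arrows, with every vertex the start and end of exactly two arrows. A linear Brauer tree with four edges has only six arrows, and its two outer vertices have a single arrow in and out; the quivers cannot be isomorphic. The correct Brauer graph, as the paper shows, is the $4$-cycle $v_1\!-\!v_2\!-\!v_3\!-\!v_4\!-\!v_1$ with $\mf\equiv 1$: each of its four graph-vertices contributes a special $2$-cycle, giving eight arrows and the right local structure. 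Once you replace your linear tree by this cycle, the matching of relations goes through exactly as in the $\Gamma_1^\times$ case.

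For part~(2) your argument is correct but takes a different route from the paper. The paper gives a one-line proof: the word $\gamma^{-1}(\alpha^+)({}^+\beta)$ defines a band in the (special biserial--like) presentation of $B$, hence a one-parameter family of band modules, so $B$ has infinite type. Your argument instead invokes the classification of symmetric algebras of finite type (Lemma~\ref{Thm: classification}) and eliminates each class: the three incoming and three outgoing arrows at vertex~$2$ rule out Brauer and modified Brauer tree algebras via Lemma~\ref{lema:quivers iguales}, and the simple-module count reduces the Dynkin case to $\mathbb{A}_3$, which is again a Brauer tree algebra and hence excluded. This is a valid alternative, though it leans on heavier classification machinery where the paper's band argument is elementary and self-contained.
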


\begin{proof}
    By the definition of skew-Brauer graph algebras, the quiver $Q_B$ of $B$ is one of the quivers depicted in Figure \ref{fig:quivers}: the first two quivers correspond to $\Gamma_1^\times$ with $\mf(w)=1$ and $\mf(w)\neq 1$, respectively, while  the last one corresponds to $\Gamma_2^\times$ with arbitrary $\mf(v)$.
    \begin{figure}[ht!]
    \centering
\begin{tikzcd}
& & & & & & & & 1^+ \arrow[dd, shift right, "^+\beta^+", swap] \arrow[rr, shift right, "^+\beta^-", swap] && 2^- \arrow[dd, shift left, "^-\alpha^-"] \arrow[ll, shift right, "^-\alpha^+", swap]\\
 1^+ \arrow[r, shift right, "{}^+\beta", swap]  &  2\arrow[r, shift left, "\alpha^-"] \arrow[l, shift right, "\alpha^+", swap]  &   1^- \arrow[l, shift left, "{}^-\beta"]  & &1^+ \arrow[r, shift right, "{}^+\beta", swap]  &  2\arrow[r, shift left, "\alpha^-"] \arrow[l, shift right, "\alpha^+", swap] \arrow[loop above, "\gamma"] &   1^- \arrow[l, shift left, "{}^-\beta"]  && & &\\
   & & & & & & & & 2^+\arrow[rr, shift left, "^+\alpha^-"] \arrow[uu, shift right, "^+\alpha^+", swap] & &1^- \arrow[uu, shift left, "^-\beta^-"] \arrow[ll, shift left, "^-\beta^+"]
\end{tikzcd}
    \caption{Quivers of skew-Brauer graphs with underlying tree graph and 2 edges}
    \label{fig:quivers}
\end{figure}

    To prove the first statement, suppose that $B$ is either $B_{\Gamma_1^\times}$ with $\mf(w)=1$ or $B_{\Gamma_2^\times}$. Let $C$ be any cyclic permutation of the elementary cycle $({}^+\beta^{\epsilon})({}^{\epsilon}\alpha^+)$ or $({}^-\beta^{\epsilon})({}^{\epsilon}\alpha^-)$, for any possible $\epsilon\in\{+,-, \emptyset\}$, and let $m$ be a natural number greater than 1. It follows from relations of type 0 and relations of type III, that any $m$-power $C^m$ of the cycle $C$ is zero in $B$, and as a consequence one can assume that $\mf(v)=1$. Then we have that $B$ is isomorphic to a Brauer graph algebra $B_\Gamma$ with $\Gamma$ one of the Brauer graphs $\Delta_1$ or $\Delta_2$, respectively.

\begin{figure}[h!]
   \adjustbox{scale=.9,center}
{
\begin{tikzcd}
{}&{}&{}&{}&{}&v_1 \arrow[r, no head] \arrow[dd, no head] &v_2&\\ [-10pt] 
    v_1 \arrow[r, no head]  & v_2 \arrow[r, no head]  &    v_3 \arrow[r, no head] & v_4 & {} & {} & &    \\ [-15pt]
    {}&{}&{}&{}&{}&v_4 &v_3 \arrow[l, no head] \arrow[uu, no head]&
\end{tikzcd}
}
    \caption{Brauer graphs $\Delta_1$ (left) and $\Delta_2$ (right)}
    \label{fig:excepcional cases}
\end{figure}

with $\mf(v_i)=1$ for $i=1,2,3,4$.

Now, suppose that $B=B_{\Gamma_1^{\times}}$ where $\mf(w)\neq 1$. Then the algebra is given by the second quiver in Figure~\ref{fig:quivers} and the ideal 

\begin{equation*}
    \begin{split}
I_{B_{\Gamma_1^{\times}}}= &\langle (\alpha^{+})({}^+\beta)-(\alpha^{-})({}^-\beta), \gamma^{\mf(w)+1}, \gamma^{\mf(w)}-(\alpha^{-})({}^-\beta),  \gamma^{\mf(w)}-(\alpha^{+})({}^+\beta), \\ &({}^+\beta)\gamma, ({}^+\beta)(\alpha^{-}), ({}^-\beta)(\alpha^{+}),({}^-\beta)\gamma, \gamma(\alpha^{+}), \gamma(\alpha^{-})  \rangle.
\end{split}
\end{equation*}
We observe that $B$ is an algebra of infinite representation type, since there is band module associated to the word $\gamma^{-1}(\alpha^{+})({}^+\beta)$.
\end{proof}

\begin{lem}\label{lem:not brauer graph}
Let $B=B_{\Gamma^{\times}}$ be a skew-Brauer graph algebra with a non empty set $\Gamma_0^{\times}$ of special vertices
and suppose that $B$ is neither $B_{\Gamma_1^\times}$ where $\mf(w)=1$ nor $B_{\Gamma_2^\times}$, where $\Gamma_i^{\times}$ is one of the following two skew-Brauer graphs.

\begin{center}
\begin{tikzcd}
   \Gamma_1^{\times} \colon \times \arrow[r, no head]  & \bullet \arrow[r, no head]  &    w & {} & {} & \Gamma_2^{\times} \colon  \times \arrow[r, no head]  & \bullet \arrow[r, no head]  &    \times
\end{tikzcd}
\end{center}
Then $B$ is not a Brauer tree algebra.
\end{lem}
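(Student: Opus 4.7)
The plan is to argue by contradiction: suppose $B = B_{\Gamma^{\times}}$ is isomorphic to a Brauer tree algebra. Since both algebras are finite-dimensional and defined by admissible ideals (by Theorem~\ref{thm: skew brauer simetricas} for $B_{\Gamma^{\times}}$), Lemma~\ref{lema:quivers iguales}(1) forces the admissible quiver $Q^{sg}_{\Gamma^{\times}}$ to coincide with the quiver of the Brauer tree algebra, and Lemma~\ref{lema:quivers iguales}(2) then implies that every vertex of $Q^{sg}_{\Gamma^{\times}}$ is the start of exactly one or two arrows, and similarly for ending arrows. I fix a distinguished vertex $v^{\times}\in\Gamma_{0}^{\times}$, let $x\in\Gamma_{0}'$ be its unique non-distinguished neighbor, and let $e$ be the edge joining them. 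The argument proceeds by cases on $\val(x)$.

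The principal case is $\val(x)\geq 3$. The vertex $e$ of $Q_{\Gamma^{\times}}$ is distinguished and so duplicates into $e^{+}, e^{-}$ in $Q^{sg}_{\Gamma^{\times}}$. The special cycle $C_x$ at $x$, of length $\val(x)$, lifts to two sg-special cycles $C_{x,+}$ and $C_{x,-}$ in $Q^{sg}_{\Gamma^{\times}}$ which traverse identical sequences of quiver-vertices except that $C_{x,+}$ passes through $e^{+}$ and $C_{x,-}$ through $e^{-}$. Consequently they share exactly $\val(x)-1\geq 2$ quiver-vertices. However, in the Brauer quiver of a Brauer tree algebra, distinct special cycles correspond to distinct vertices of the tree, and any two distinct vertices of a tree share at most one edge; hence any two distinct special cycles can share at most one vertex of the associated quiver. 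This yields the desired contradiction.

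The case $\val(x)=2$ is handled by further quiver-condition analysis: letting $f$ be the vertex of $Q^{sg}_{\Gamma^{\times}}$ corresponding to the other edge at $x$, the two arrows ending at $f$ from $e^{+}, e^{-}$ already saturate the bound, so the remaining endpoint $u$ of $f$ in $\Gamma^{\times}$ must be a leaf with $\mf(u)=1$ (otherwise either a loop at $f$ induced by $\mf(u)>1$ or a non-trivial special cycle at $u$ would create a third arrow at $f$). Thus $\Gamma^{\times}$ is either $\Gamma_{2}^{\times}$ (if $u$ is distinguished) or $\Gamma_{1}^{\times}$ with $\mf(w)=1$ (if $u$ is non-distinguished), both excluded by hypothesis. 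The case $\val(x)=1$ reduces to $\Gamma^{\times}$ being a single edge: if $\mf(x)=1$ then $B\cong K\times K$ is semisimple and cannot be a Brauer tree algebra, while if $\mf(x)>1$ then combining the relations of type~0 with those of type~III forces every length-two path in $Q^{sg}_{\Gamma^{\times}}$ to vanish, so that $J_B^{2}=0$, whereas any Brauer tree algebra with matching quiver satisfies $J^{2}\neq 0$ because a non-trivial power of a special cycle of length at least two lies in the socle.

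The main obstacle is the degenerate subcase $\val(x)=1$ with $\mf(x)>1$: here the arrow-count bound of Lemma~\ref{lema:quivers iguales}(2) is satisfied by $Q^{sg}_{\Gamma^{\times}}$, so one cannot extract a contradiction from the quiver alone. Instead, one must compute $J^{2}$ directly inside $B_{\Gamma^{\times}}$ using the explicit defining relations, exploiting the interaction between the type~0 relations (which equate the square of each loop with the length-two path between $e^{+}$ and $e^{-}$) and the type~III relations (which kill precisely those length-two paths whose composite arrows are distinct) to conclude $J^{2}=0$, and then contrast this with the non-vanishing of $J^{2}$ in any Brauer tree algebra with that quiver.
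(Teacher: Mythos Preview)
Your argument has a genuine gap in the principal case $\val(x)\geq 3$. You correctly observe that the two sg-special cycles $C_{x,+}$ and $C_{x,-}$ share $\val(x)-1\geq 2$ vertices of $Q^{sg}_{\Gamma^{\times}}$, and that in the quiver of a Brauer tree algebra any two distinct \emph{special cycles of the tree} share at most one vertex. But you never justify why $C_{x,\pm}$ would be special cycles for the hypothetical Brauer tree structure. Lemma~\ref{lema:quivers iguales}(1) only identifies the underlying quivers, not any decomposition into cycles; and in fact $C_{x,+}$ and $C_{x,-}$ cannot \emph{both} be special cycles of any Brauer graph, since special cycles partition the arrow set while $C_{x,+}$ and $C_{x,-}$ share every arrow except the two passing through $e^{\pm}$. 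The argument can be rescued by first proving that in a Brauer tree quiver every \emph{simple} cycle is already a special cycle (so that two distinct simple cycles share at most one vertex) and then checking that $C_{x,\pm}$ are simple, but neither step is automatic---the second fails, for instance, whenever $\Gamma^{\times}$ has a graph-loop at $x$.

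The paper takes a different route in the case $\val(x)\geq 3$: rather than analysing cycles in the quiver, it shows that the indecomposable projective module at the edge $\of^{-1}(e)$ is not biserial (Remark~\ref{rem: projectives} and Example~\ref{ex:projectives}); since Brauer graph algebras are special biserial, this rules them out directly and uniformly, with no case distinctions on the shape of $\Gamma^{\times}$ near $x$. For $\val(x)=2$ your arrow-count at $f$ via Lemma~\ref{lema:quivers iguales}(2) is essentially the paper's argument; the paper additionally invokes Lemma~\ref{lemma: excepcional cases}(2) for the leaf sub-case $\mf(u)>1$, whereas you treat that sub-case by counting the induced loop as a third arrow---both are fine. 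Your extra case $\val(x)=1$ with $\mf(x)>1$ lies outside the framework: Definition~\ref{definition-admissible quiver} forbids loops at distinguished vertices, but $\val(x)=1$ together with $\mf(x)>1$ forces a loop at the distinguished quiver-vertex $e$ in $Q_{\Gamma^{\times}}$, so the sg-construction does not apply and this sub-case need not be considered.
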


\begin{proof}
Let $\Gamma^\times$ be a skew-Brauer graph with a special vertex $v^\times$. By definition of skew-Brauer graph, there exists a non-special vertex $v\in\Gamma^{\times}$ such that $v$ and $v^\times$ have exactly one edge $x$ in common. We claim that if $v$ has valency at least three, $B_{\Gamma^{\times}}$ is not a Brauer graph algebra.

Since $v$ has at least valency three, consider $x$ and $y=\of^{-1}(x)$ two different edges attached to $v$, where $y$ might be a loop. Then by Remark~\ref{rem: projectives},  there are at most two projectives in $B_{\Gamma^\times}$ associated to $y$, namely $P_y$ or $P_{y^+}$ and $P_{y^-}$, in both cases the projectives are not biserial, see Example \ref{ex:projectives}. As a consequence $B_{\Gamma^\times}$ is not a Brauer graph algebra, and in particular is not a Brauer tree algebra.

Now, we suppose that $v$ has valency two. In this case, $x$ and  $y=\of^{-1}(x)$ are two different edges attached to $v$ and $y$ is not a loop, otherwise, the valency of $v$ would be grater than 2. We denote by $w$ the vertex in $\Gamma^{\times}$ such that $y$ is the edge connecting $v$ and $w$.

By hypothesis, the valency of $w$ is at least two, or if the valency of $w$ is one, then $\mf(w)\neq 1$. In the second case, $B$ is the skew-Brauer graph $B_{\Gamma_1^{\times}}$ with $\mf(w)\neq 1$, and by Lemma \ref{lemma: excepcional cases} is not of finite representation type, therefore can not be isomorphic to a Brauer tree algebra.

Now we suppose that $w$ has valency at least two. By the definition of the quiver $Q=Q_{\Gamma^\times}$,
the local configuration for the vertex $y \in Q$ associated to the edge $y$ in $\Gamma^{\times}$ (here $y$ receives the same name by abuse of notation) is as follows.

\begin{figure}[h!]
   \adjustbox{scale=.9,center}
{
\begin{tikzcd}
    x^+ \arrow[dr, shift left]  &  {}  &   \of^{-1}(y) \arrow[dl]\\
    & y \arrow[lu, shift left] \arrow[ld, shift right] \arrow[dr]& \\
   x^- \arrow[ur, shift right] & {} & \of(y) 
\end{tikzcd}
}
\end{figure}

where $\of(y)$ and $\of^{-}(y)$ might be equal. Then there are tree arrows ending at $y$ and, by Lemma \ref{lema:quivers iguales}, 
$B$ is not a Brauer graph algebra as we claim.\end{proof}

\begin{lem}\label{lem:not modified brauer graph}
Let $\Gamma^\times$ be a skew-Brauer graph with at least two edges and with a non-empty set $\Gamma_0^{\times}$ of special vertices. If $B_{\Gamma^{\times}}$ is of finite representation type, then it is not a modified Brauer tree algebra.
\end{lem}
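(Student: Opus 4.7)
My plan is to argue by contradiction: suppose $B_{\Gamma^\times}$ is isomorphic to a modified Brauer tree algebra. Since both algebras admit admissible presentations (see Theorem~\ref{thm: skew brauer simetricas}), Lemma~\ref{lema:quivers iguales}(1) identifies their quivers, and then Lemma~\ref{lema:quivers iguales}(3) forces the common quiver $Q^{sg}_{\Gamma^\times}$ to have every vertex of in-degree and out-degree in $\{1,2\}$ while containing exactly one loop. I would aim to derive a contradiction by exhibiting a vertex with in-degree at least $3$, or by showing that the number of loops cannot equal one.

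If $\Gamma^\times$ has more than one distinguished vertex, then by Lemma~\ref{lema: distiguished vertices} combined with the hypothesis of at least two edges, $\Gamma^\times = \Gamma_2^\times$; its quiver (see Figure~\ref{fig:quivers}) contains no loops, contradicting the requirement. I may then assume $\Gamma^\times$ has exactly one distinguished vertex $v^\times$. Let $v$ be its unique neighbor, joined by the edge $x$, and write $(x = x_1, x_2, \ldots, x_t)$ for the cyclic order at $v$, with $t \geq 2$.

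The splitting of $x$ in $Q^{sg}_{\Gamma^\times}$ produces parallel arrows $x^+ \to x_2$, $x^- \to x_2$, and $x_t \to x^+$, $x_t \to x^-$, so the in-degree at $x_2$ and the out-degree at $x_t$ are already $2$ from these alone. If the endpoint of $x_2$ opposite $v$ has valency $\geq 2$ or has valency $1$ with multiplicity $\geq 2$, then an additional arrow arriving at $x_2$ (either from the cycle at that opposite endpoint, or from a loop at $x_2$) brings the in-degree to at least $3$, contradicting Lemma~\ref{lema:quivers iguales}(3). Hence that endpoint must be a leaf of multiplicity one, and symmetrically for the endpoint of $x_t$ opposite $v$. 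In the subcase $t=2$, this forces $\Gamma^\times = \Gamma_1^\times$ with $\mf(w)=1$, whose quiver (first quiver of Figure~\ref{fig:quivers}) is loop-free, contradicting the loop condition.

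The remaining case $t \geq 3$ is the main obstacle, since local degree analysis at intermediate edges $x_i$ (for $2 < i < t$) permits configurations that can formally satisfy Lemma~\ref{lema:quivers iguales}(3); for instance, $t = 4$ with an intermediate leaf endpoint of multiplicity $\geq 2$ yields exactly one loop and degrees inside $\{1,2\}$ throughout. To rule out such configurations, I would inspect the Loewy structure of the projectives $P_{x^+}, P_{x^-}$ and $P_{x_i}$ via Remark~\ref{rem: projectives} and Example~\ref{ex:projectives}: the characteristic ``bi-fork'' at $x^+, x^-$ arising from the type~$0$ commutativity relations in $I_{\Gamma^\times}$ identifies distinct paths through $x^+$ and $x^-$, and this precise identification is incompatible with the relation structure of any modified Brauer tree algebra (as described in \cite{Sko2006}). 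Matching the relations, rather than just the quivers, is the delicate part of the argument and is where the finite representation type hypothesis must be used to eliminate the borderline configurations.
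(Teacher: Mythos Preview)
Your reduction to a single distinguished vertex and your degree analysis at the edge $x_2$ (the paper calls it $y = \of(x)$) parallel the paper's argument closely: ruling out the case where the far endpoint of $x_2$ has valency $\geq 2$ or multiplicity $\geq 2$ via in-degree $3$ is exactly the paper's Case~1, and your $t=2$ case matches the paper's Case~3.

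The gap is in the case $t \geq 3$. You correctly note that $x_2$ receives two arrows (from $x^+$ and $x^-$), but you overlook that it \emph{emits only one}: since $t \geq 3$ the successor $\of_v(x_2) = x_3$ is not the distinguished edge, so there is a single arrow $x_2 \to x_3$ on the $v$-side, and the far endpoint (a leaf of multiplicity one, as you already established) contributes nothing. Lemma~\ref{lema:quivers iguales}(3) should be read as asserting that in the quiver of a modified Brauer tree algebra each vertex is the start of exactly $k$ arrows \emph{and} the end of exactly $k$ arrows for the same $k \in \{1,2\}$; the parenthetical ``(end)'' couples the two counts, just as in part~(2) for Brauer graph algebras, where the arrows decompose into simple cycles. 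Under this reading, in-degree $2$ and out-degree $1$ at $x_2$ is already the desired contradiction --- this is precisely the paper's Case~2.

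So the ``main obstacle'' you describe dissolves once Lemma~\ref{lema:quivers iguales}(3) is interpreted correctly, and the proposed detour through the Loewy structure of projectives and relation-matching is unnecessary (and, as you acknowledge, not carried out). No use of the finite representation type hypothesis beyond Lemma~\ref{lema: distiguished vertices} is required.
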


\begin{proof}
Since $B_{\Gamma^{\times}}$ is of finite representation type, by Lemma \ref{lema: distiguished vertices}, $\Gamma^{\times}$ has exactly one special vertex $v^{\times}$. By definition of skew-Brauer graph, there is exactly one edge $x$ incident to  $v^\times$. We denote by $v$ the other vertex of $x$.

Consider the edge $y = \of(x)$ having $v$ and $u$ as vertices. This is the minimal configuration for a skew-Brauer graph with a special vertex. We analyse cases for the rest of the graph. 
\begin{center}
    \begin{tikzcd} 
v^\times \arrow[r, "x", no head] & v \arrow[r, "y", no head] & u
\end{tikzcd}
\end{center}

Case 1: suppose $\mf(u) > 1$ or $\mathrm{val}(u)>1$. Then the vertex $y \in Q_{\Gamma^{\times}}$ is the end of three arrows. 

Case 2: the following three $\mf(u) = 1$, $\mathrm{val}(u)=1$ and $\mathrm{val}(v)>1$ hold. Then there are exactly two arrows ending at $y \in Q_{\Gamma^{\times}}$ and one arrow starting at $y$.

Case 3: the graph $\Gamma^{\times}$ is the one depicted above. In that case either there is no loop in $Q_{\Gamma^{\times}}$ or the vertex $y$ is the start and end of three arrows, one of them being a loop.

Given the three cases, by Lemma \ref{lema:quivers iguales} (3) the algebra $B_{\Gamma^{\times}}$ is not a modified Brauer tree algebra.\end{proof}

So far we have established that, except for small pathological cases (see Lemma \ref{lemma: excepcional cases}), skew-Brauer graph algebras of finite representation type are not Brauer graph algebras nor modified Brauer tree algebras. So we should explore the existence of representation finite skew-Brauer graph algebras in the family of trivial extensions of iterated tilted algebras of type $\mathbb{D}$ or $\mathbb{E}$. The following result shows that any skew-Brauer graph algebra of finite representation type is not the trivial extension of a iterated tilted algebra of Dynkin type $\mathbb{E}$.

\begin{lem}\label{lem:not E}
Let $\Gamma^\times$ be a skew-Brauer graph with at least two edges and with a non-empty set $\Gamma_0^{\times}$ of special vertices. If $B_{\Gamma^{\times}}$ is of finite representation type, then $B_{\Gamma^{\times}}$ is not isomorphic to a trivial extension of an iterated tilted algebra of Dynkin type $\mathbb{E}$.
\end{lem}

\begin{proof}
    By Lemma \ref{lema: distiguished vertices}, the skew-Brauer graph $\Gamma^{\times}$ has exactly one special vertex, let $v^{\times}$ be such vertex and let $u$ be the non-special vertex of $\Gamma^{\times}$ that shares an edge with $v^{\times}$. Since $\Gamma^{\times}$ has exactly one special vertex, there are exactly two special cycles in $Q_{\Gamma^{\times}}$ induced by the edges attached to $u$. These cycles share all arrows except two, having the following diagram.

\begin{figure}[h!]
   \adjustbox{scale=.9,center}
{
    \begin{tikzcd}
        &&a_1^+\arrow{dll}&&\\
        a_2\arrow[r]&a_3\arrow[r]&\dots\arrow[r] &a_{k-1}\arrow[r]&a_k\arrow[llu] \arrow[lld]\\
        &&a_1^-\arrow[llu]&&
    \end{tikzcd}
}
\end{figure}
    
We notice that vertices $a_1^+$ and $a_1^-$  have exactly one arrow ending and one arrow starting at them, a property that is not allowed in the quivers of trivial extension of iterated tilted algebras of Dynkin type $\mathbb{E}$, see \cite[Section 3]{F99} for a complete list of quivers of trivial extension of iterate tilted algebras of Dynkin type.
\end{proof}

\begin{rem}\label{rem:cut set}
   In general, computing cut sets (or admissible cuts) is not an easy task, as the computations depend on the set of elementary cycles of the trivial extension of an algebra. However, in the context of trivial extension of gentle algebras, if we consider its Brauer graph, any set of arrows $\mathcal{D}$ in $Q_{\Gamma}$ containing exactly one arrow of each special cycle is a cut set, and furthermore $T(B/ \langle D \rangle)$ is isomorphic to $B$, see \cite[Theorem 1.3]{Sch15}.
\end{rem}

\begin{thm}\label{thm:finite representation type}
Let $\Gamma^{\times}$ be a skew-Brauer graph with at least two edges and with $\Gamma_0^{\times}\neq \emptyset$. If the  skew-Brauer graph algebra $B_{\Gamma^{\times}}$ is of finite representation, then:

\begin{enumerate}
    \item[$(1)$] $B_{\Gamma^{\times}}$ is isomorphic to a trivial extension of iterated tilted algebra of Dynkin type $\mathbb{A}_3$ or $\mathbb{D}_n$ for some natural number $n\geq4$, and 
    \item[$(2)$] $\Gamma^{\times}$ is a skew-Brauer tree graph.
\end{enumerate}
\end{thm}

\begin{proof}
$(1)$ Let $\Gamma^\times=(\Gamma'_0, \Gamma_0^{\times}, \Gamma_1,  \mf, \of)$ be a skew-Brauer graph, let  $\Gamma=(\Gamma'_{0}\cup\Gamma_0^\times, \Gamma_1,  \mf, \of)$ be its auxiliary Brauer graph, and $\widehat{\Gamma}=(\Gamma'_{0}\cup\Gamma_0^\times, \Gamma_1)$ be its underlying graph. We now analyze $B=B_{\Gamma^{\times}}$ based on
the number of edges in $\Gamma$.

Suppose that $\Gamma^{\times}$ has exactly two edges, then by Lemma~\ref{lemma: excepcional cases}, $\Gamma^{\times}=\Gamma_1^{\times}$ with multiplicity function identically one. Even more, $B_{\Gamma^\times}$ is isomorphic to the Brauer tree algebra associated with the Brauer tree shown in Figure~\ref{fig:excepcional cases}, and in this case, $B_\Gamma$ is isomorphic to the trivial extension of the iterated tilted algebra of Dynkin type $\mathbb{A}_3$, see \cite[Theorem 1.3]{Sch15} and \cite[Lemma 3.1 and Theorem 3.2]{WZ22}.

Now, we assume $\Gamma^{\times}$ has more than 2 edges. In particular, this assumption implies that $\Gamma^{\times}$ is not $\Gamma_1$ nor $\Gamma_2$ described in Lemma~\ref{lemma: excepcional cases}. 
Since $B_{\Gamma^\times}$ is of finite representation type, Lemma~\ref{lema: distiguished vertices} implies that the skew-Brauer graph $\Gamma^{\times}$ has exactly one special edge.

By Lemma~\ref{lem:not brauer graph} we have that $B_{\Gamma^\times}$ is not a Brauer tree algebra (hence it is not isomorphic to the trivial extension of an iterated tilted algebra $A$ of Dynkin type $\mathbb{A}$.). By Lemma \ref{lem:not modified brauer graph} we know that $B_{\Gamma^\times}$ is not a modified Brauer tree algebra. And by Lemma \ref{lem:not E}, $B_{\Gamma^\times}$ is not a trivial extension of an iterated tilted algebra of type $\mathbb{E}_{6,7,8}$. Thus, the classification theorem in Lemma~\ref{thm: skew brauer simetricas} implies that $B_{\Gamma^\times}$ is isomorphic to the trivial extension of an iterated tilted algebra $A$ of Dynkin type $\mathbb{D}$.

$(2)$ To prove that $\Gamma^\times$ is a skew-Brauer tree, we first observe that the multiplicity function $\mf$ of $\Gamma^{\times}$ is identically equal to one. This follows because $B = B_{\Gamma^\times}$ is isomorphic to the trivial extension of an iterated tilted algebra $A$ of finite representation type, and by \cite[Theorem 2]{Yam80}, the quiver $Q_A$ does not have an oriented cycle. 

We now prove that the underlying graph $\widehat{\Gamma} = (\Gamma'_{0}\cup\Gamma_0^\times, \Gamma_1)$ is in fact tree. Suppose that $\widehat{\Gamma}$ is not a tree, then there exists a cycle $\mathcal{C}$ in $\widehat{\Gamma}$ 
 \begin{center}
        \begin{tikzcd}
            x_1\arrow[r, no head,"a_1"] & x_2 \arrow[r, no head, "a_2"]& x_3 \arrow[r, no head, "a_3"]& \cdots \arrow[r, no head, "a_{k-1}"] & x_k \arrow[r, no head, "a_k"] &x_{k+1}=x_1
        \end{tikzcd}
    \end{center}
such that $x_i\neq x_j$ if $i\ne j$ and $a_{k}\neq a_1$. We denote by $V=\{x_1, \dots, x_k\}$ the set of vertices of $\mathcal{C}$.
    
We notice that for each vertex $x_i$ there is exactly one special cycle $C_{x_i}$, up to cycle equivalence, in $Q_{\Gamma}$, given by the edges attached to each $x_i$,  for $i=1\dots, k$. Since there are no repetition of vertices in $\mathcal{C}$ and each arrow of $Q_{\Gamma}$ belongs to exactly one especial cycle, then by Remark~\ref{rem:cut set} the set $$\mathcal{D}=\bigcup\limits_{i=1}^{k}\{\alpha_{x_i}\in Q_{\Gamma}\mid \alpha_{x_i}: a_i \to b_i \}\bigcup \mathcal{D}_2$$ is a cut set of $Q_{\Gamma}$, where $\mathcal{D}_2$ is any set of arrows containing exactly one arrow $\alpha_x$ that is not a loop of each special cycle $C_x$ with $x\notin V$, and $\of(x_{i})=(..., a_{i}, b_i,\dots )$ up to rotation.
By Theorem~\ref{thm:admissiblecuts} and Theorem~\cite[Theorem 3.6]{FSTTV22} there exists a good cut $\mathcal{D}'$ of $Q_B$ induced by $\mathcal{D}$ such that $T(B/\langle\mathcal{D}'\rangle) \simeq B$. 

Finally, to prove the claim, observe that the quiver $Q_{\Gamma}$ has an oriented cycle induced by the cycle $\mathcal{C}$ in $\Gamma$ given by the sub-paths of $C_{x_i}$ from the vertex $a_{i+1}$ to the vertex $a_i$ in $Q_A$ for $i=2, \dots, k$ and the sub-path of $C_{x_1}$ from the vertex $a_1$ to $a_k$, and therefore $B/\langle \mathcal{D}'\rangle$  also has an oriented cycle.
This contradicts \cite{Yam80} as $B$ is a trivial extension of an iterated tilted algebra of Dynkin type $\mathbb{D}_n$.\end{proof}

\begin{thm}\label{thm:skew-brauer tree algebras}
Any skew-Brauer tree algebra is of finite representation type.
\end{thm}

\begin{proof}
Let $\Gamma^{\times}$ be a skew-Brauer tree. If $\Gamma^{\times}$ has exactly two edges, using similar arguments as in the proof of Theorem~\ref{thm:finite representation type} , $B_{\Gamma^{\times}}$ is isomorphic to the trivial extension of type $\mathbb{A}_3$.

Now suppose that $\Gamma^{\times}$ has more than two edges. By Theorem~\ref{thm: equiv1}, $B=B_{\Gamma^{\times}}$ is a trivial extension of a skew-gentle algebra. We will show that $B$ is actually a trivial extension of a iterated tilted algebra of Dynkin type $\mathbb{D}_n$.

Let $v^{\times}$ be the unique special vertex in $\Gamma^{\times}$, $u$ be the unique non-special vertex in $\Gamma^{\times}$ that shares an edge with $v^{\times}$, and $u_2, \dots, u_k$ be the non-special vertices in $\Gamma^{\times}$ that share an edge with $u$. Denote these last edges by $a_2, \dots, a_k$ respectively, see the picture below.

\begin{center}
    \begin{tikzcd}
  \Gamma^{\times}:&&&v^{\times}\arrow[d, no head, "a_1"]&&\\
    &&&u\arrow[dll, no head,swap, "a_2"] \arrow[drr, no head, "a_k"]&&\\
   &u_2\arrow[dl, no head, dotted] \arrow[dr, no head, dotted]&&\dots&&u_k \arrow[dr, no head, dotted] \arrow[dl, no head, dotted]\\
   \bullet&\dots& \bullet&&\bullet&\dots&\bullet
    \end{tikzcd}
\end{center}

Since the underlying graph of $\Gamma^{\times}$ is a tree, the subgraph obtained by deleting the vertex $u$ is disconnected, consisting of connected components $\Gamma_1, \Gamma_2, \dots, \Gamma_k$. These components are in bijection with the vertices $v^{\times}, u_2, \dots, u_k$, where each $u_i$ is a vertex of the connected component $\Gamma_i$ and $\Gamma_i$ is a tree for $i = 2, \dots, k$. Furthermore, $\Gamma_1$ is associated with the special vertex $v^{\times}$.

By \cite[Lemma 3.1]{WZ22}, each connected component $\Gamma_i$ for $i=2, \dots, k$ induces a trivial extension of an iterated tilted algebra of Dynkin type $\mathbb{A}_n$. Then any cut set $\mathcal{C}_i$ of $\Gamma_i$ defines an iterated tilted algebra of Dynkin type $\mathbb{A}_n$

Even more, by construction of $Q_{\Gamma^{\times}}$,  there are exactly two special cycles in $Q_{\Gamma^{\times}}$ induced by the edges attached to $u$ that share all arrows except two, having the following diagram.

\begin{figure}[h!]
   \adjustbox{scale=.9,center}
{
\begin{tikzcd}
        &&a_1^+\arrow[dll, swap,"{}^+\beta"]&&\\
        a_2\arrow[r]&a_3\arrow[r]&\dots\arrow[r] &a_{k-1}\arrow[r]&a_k\arrow[llu, swap, "\alpha^+"] \arrow[lld, "\alpha^-"]\\
        &&a_1^-\arrow[llu, "{}^-\beta"]&&
\end{tikzcd}
}
\end{figure}

It is clear that the set  $\mathcal{C}=\bigcup\limits_{i=2}^k \mathcal{C}_i \cup \{\alpha^+, {}^-\beta\}$ is a cut set of $B_{\Gamma^{\times}}$. Then the induced quotient algebra $A=B/ \langle\mathcal{C}\rangle $ of  $B$ has the following quiver 

\begin{figure}[H]
   \adjustbox{scale=.9,center}
{
    \begin{tikzcd}
        a_1^+ \arrow[r,"{}^+\beta"] & a_2^*\arrow[r]& a_3^*\arrow[r]& \dots \arrow[r] & a_{k-1}^*\arrow[r] & a_k^*\arrow[r, "\alpha^-"]& a_1^-
    \end{tikzcd}
}
\end{figure}

where each $a_i^*$ has attached an iterated tilted algebra of Dynkin type $\mathbb{A}_n$. Then by \cite[Section 7, Main theorem, case D3]{AS88} and \cite{Keller91} $A$ is an iterated tilted algebra of Dynkin type $\mathbb{D}$.\end{proof}

\section{A geometric interpretation using dissections}

In this section we use generalized dissections of orbifold surfaces, that were originally considered in \cite{LSV, AB} to study the derived categories of skew-gentle algebras, to give a geometric version of the good cut set. We follow \cite{OPS18,LSV,AB} for the next definitions and results.

\subsection{Computing trivial extensions of skew-gentle algebras using dissections}\label{sec surface disetions}

In this subsection we obtain a skew gentle algebra $A=A(Q,I,\mathrm{Sp})$ from an orbifold dissection. By our previous results, we know that $T(A^{\mathrm{sg}})$ is isomorphic to a skew-Brauer graph algebra $B_{\Gamma^{\times}}$. We use the geometry to compute the quiver $Q_{\Gamma^{\times}}$ and to determine the special cycles, consequently we have all the information needed to define $B_{\Gamma^{\times}}$.

An \emph{orbifold marked surface} is a tuple $O=(S,M,P, \mathcal{O})$ where $S$ is a compact and oriented Riemann surface with boundary, $M$ is a finite set of marked points in the boundary $\partial S$, $P$ is a finite set of marked points in the interior of $S$, called punctures, and $\mathcal{O}$ (with $\mathcal{O} \cap P = \emptyset$) is a set of orbifold points of order two. In particular, when $\mathcal{O}=\emptyset$, we call it a marked surface and denote it simply by $(S,M,P)$. Even more, a marked surface $(S,M,P)$ is unpunctured if $P=\emptyset$.

A \emph{boundary segment} is a curve in $\partial S$ that is delimited by its endpoints in $M$ and does not contain other points in $M$. An \emph{arc} is a curve $\gamma \colon [0,1] \to S$, injective on $(0,1)$ up to orbi-homotopy, considered up to injective orbi-homotopy relative to its endpoints $\gamma(0)$ and $\gamma (1)$, and such that its endpoints belong to $M \cup P \cup \mathcal{O}$. See \cite{CG16} for the definition of orbi-homotopy.

A $\textcolor{red}\times$-dissection of an orbifold marked surface $O=(S, M,P, \mathcal{O})$ is a set $D^{\times}$ of arcs that cuts $O$ into (possibly degenerate) polygons with exactly one edge as a boundary segment, each $x\in\mathcal{O}$ is an endpoint of exactly one arc, there are no boundary components without marked points in the interior of the polygons\footnote{While boundary components without marked points in the interior of polygons can happen in the dissections studied in \cite{LSV}, they will not occur in our dissections.}, and each point $x\in M\cup P$ is the ending point of at least one arc of $D^{\times}$. In this context, a degenerate polygon is one that can contain:
\begin{enumerate}
    \item Special arcs: Arcs ending at an orbifold point $x\in\mathcal{O}$ (with no other arc incident to $x$), or
    \item Self-folded arcs: Arcs ending at a puncture $p\in P$ (with no other arc incident to $p$).\footnote{We take the name from their resemblance to the arcs appearing in self-folded triangles in \cite[Section 2]{fomin2008}.}
\end{enumerate}

The arcs that are not of the form $(1)$ or $(2)$, and the boundary segments, split the surface into topological polygons. To distinguish a special arc from the other arcs, we will denote it as $\gamma^{\times}$. The tuple $(S,M, P, \orbi, D^{\times})$ is called \emph{orbifold dissection}.

The list of polygons might contain what we call \emph{trivial polygons}, i.e. bigons enclosed by an edge in the dissection and one boundary segment and containing no orbifold points, consequently having no special arc, in their interior.

If there are no orbifold points in $O$, we call it $\textcolor{red}\bullet$-dissection of $(S, M, P)$ and the tuple $(S,M,P, D^{\times})$ is called \emph{surface dissection}.

The \textcolor{red}{$\times$}-dissection $(S, M,P, \mathcal{O}, D^{\times})$ is defined in \cite[Sect. 4]{LSV} as the dual dissection of another orbifold dissection. In this construction, the orbifold dissection $(S, M,P, \mathcal{O}, D^{\times})$ has two different kinds of points: the red points and the black points, in our definition, we are omitting the latter. The authors also show that \textcolor{red}{$\times$}-dissection is related to the Koszul dual algebra of $A$. In this section, we establish a different convention to construct a quiver $Q$ and an ideal $I$ that recovers, in our case, a skew-gentle algebra $A(Q,I,\textrm{Sp})$ and also data of the trivial extension $T(A^{\mathrm{sg}})$ of $A^{\mathrm{sg}}$.

We define the quiver of the skew-gentle algebra $A =A(Q,I, \mathrm{Sp})$ from a \textcolor{red}{$\times$}-dissection $(S,M,P, \orbi, D^{\times})$. We start by defining $Q$ and $\mathrm{Sp}$ following the rules: 
\begin{itemize}
    \item[(r1)] Each arc $\gamma_i$ gives rise to a vertex $i$ of $Q$ and each special arc $\gamma_j^{\times}$ corresponds to special vertex $j^{\times}$ of $Q$.
    \item[(r2)]  Each internal angle between arcs (either special or not) in a polygon gives rise to an arrow oriented in counterclockwise direction. 
    
    \item[(r3)] Each special arc $\gamma^{\times}_i$ defines a special loop $f_i:i \to i$ in $Q$.
    
    \item [(r4)] Each self-folded arc $\gamma_j$ defines a non-special loop $g_j:j\to j$ in $Q$.

\end{itemize}

\begin{exmp}\label{ex-facil} In Figure \ref{fig: ejemploA} we show a $\textcolor{red}\bullet$-dissection and a \textcolor{red}{$\times$}-dissection, each one with two degenerate polygons. 

The surface on the left has two self-folded arcs $\gamma_1$ and $\gamma_3$, while the orbifold surface in the middle has a self-folded arc $\gamma_3$ and an special arc $\gamma_1^{\times}$.

\begin{figure}[h]
    \centering
\begin{tikzpicture}[scale=0.8]
   \draw (-5,0) circle (1.5cm);
    \filldraw[red] (-6,0) circle (2pt);
    \filldraw[red] (-4,0) circle (2pt);
    \filldraw[red] (-5,1.5) circle (2pt);
    \filldraw[red] (-5,-1.5) circle (2pt);
    \filldraw[red] (-5,0) circle (2pt);
    \draw[red] (-6,0)--(-5,0) node[pos=0.5, below]{$\gamma_3$};
    \draw[red] (-5,0)--(-4,0) node[pos=0.5, above]{$\gamma_1$};
    \draw[red] (-5,0)--(-5,1.5) node[pos=0.5, left]{$\gamma_2$};
    \draw[red] (-5,0)--(-5,-1.5) node[pos=0.5, right]{$\gamma_4$};
    \draw (0,0) circle (1.5cm);
    \filldraw[red] (-1,0) circle (2pt);
    \filldraw[red] (0,0) circle (2pt);
    \node at (1,0){\textcolor{red}{$\times$}};
    \filldraw[red] (0,-1.5) circle (2pt);
    \filldraw[red] (0,1.5) circle (2pt);
     \draw[red] (-1,0)--(0,0) node[pos=0.5, below]{$\gamma_3$};
    \draw[red] (0,0)--(1,0) node[pos=0.5, above]{$\gamma_1^{\times}$};
    \draw[red] (0,0)--(0,1.5) node[pos=0.5, left]{$\gamma_2$};
    \draw[red] (0,0)--(0,-1.5) node[pos=0.5, right]{$\gamma_4$};
    \node(2) at (4,-.75) {2};
    \node(3) at (5.5,-.75) {3};
    \node(4) at (7,-.75) {4};
    \node(1) at (5.5,.75) {1};
    \node at (4.7,0.25){$\alpha$};
    \node at (4.65,-1){$\beta$};
    \node at (5.85,-1.5){$\delta$};
    \node at (4.9,1.05){$\varepsilon$};
    \node at (6.25,0.25){$\theta$};
    \node at (6.3,-1){$\gamma$};
    \draw[->] (2) --(3);
    \draw[->] (3) --(4);
    \draw[->] (4) --(1);
    \draw[->] (1) --(2);
    \draw [->] (1.west)arc(240:-40:10pt);
    \draw [->] (5.3, -0.85)arc(110:420:10pt);
\end{tikzpicture}
\caption{Two dissections for the disk that define the same $Q$ but give rise to different relations.}
\label{fig: ejemploA}
\end{figure}
\end{exmp}

For each non-trivial (degenerate) polygon $P_i$ denote by $p_i =\alpha_1 \ldots\alpha_t$ the composition of all arrows, such that for all $i$ $\alpha_i \notin \mathrm{Sp}$, arising from the interior of $P_i$ touring the interior of the polygon clockwise.

For the $\textcolor{red}\bullet$-dissection in Example~\ref{ex-facil} (left) $p_1 = \beta \delta \gamma$ and $p_2 = \theta \varepsilon \alpha$. While for the $\textcolor{red}\times$-dissection in Example~\ref{ex-facil} (center) $p_1 = \beta \delta \gamma$ and $p_2 = \theta \alpha$, and $\varepsilon \in \mathrm{Sp}$.

We complete the definition for the skew-gentle algebra $A=A(Q,I, \mathrm{Sp})$ arising from a $\textcolor{red}\times$-dissection by giving the set of relations of $I$.

\begin{enumerate}
    \item  Any composition of arrows $\alpha \beta$ (where $ \alpha, \beta \notin \mathrm{Sp}$) that is not a subpath of a $p_i$ is a relation. In particular, a non-special loop $g_i$ defines a relation $g_i^2=0$.
    \item The composition of two arrows $\alpha \beta$ that is a subpath of $p_i$ and such that $t(\alpha) = s(\beta)$ is the source of a special loop.
\end{enumerate}

 For each path $p_i$, if $s(p_i)$ (or $t(p_i)$) is a source for a special loop $f_j$, define $\widetilde{p_i}= f_j p_i$ (or $p_i f_j$).
With these relations we have that $\widetilde{p_i}$ is an $sp$-maximal path for $A=(Q,I, \mathrm{Sp})$ as in subsection~\ref{subsec the trivial extension}.

\begin{rem} For the skew-gentle algebra arising from an orbifold dissection there is a correspondence between the length of a maximal path modulo $I$ and longest (oriented) sequence of non-boundary edges in a polygon. For this reason each polygon in an orbifold dissection must contain a boundary edge. An absence of the boundary segment would result in path of infinite length modulo $I$. Then the bound quiver algebra  would not be finite dimensional and thus falls beyond our setup.
\end{rem}

For the $\textcolor{red}\times$-dissection in Example~\ref{ex-facil} (center) we have $I = \langle \alpha \beta, \delta^2, \beta \gamma , \gamma \theta, \theta \alpha \rangle$.

Note that it can happen that two arrows form a subpath in $Q$ and they arise from the interior of the same polygon, but their composition has to be a relation. This is the case for $\beta$ and $\gamma$ in Example~\ref{example-torus}.

We can use the \textcolor{red}{$\times$}-dissection to obtain the quiver $Q_{\Gamma^{\times}}$ that will define the (admissible) trivial extension of $A$. Recall that by Theorem~\ref{thm: equiv1} the trivial extension of $A^{\mathrm{sg}}$ is isomorphic to a skew-Brauer graph algebra, with skew-Brauer graph $\Gamma^{\times}$ (having multiplicity function $\mf=1$). 

\begin{figure}[ht]
\centering
\begin{tikzpicture}[scale=0.7]
    \filldraw[gray!30] (0,0) circle (10pt);
    \draw (0,0) circle (10pt);
        \filldraw[gray!30] (6,0) circle (10pt);
    \draw (6,0) circle (10pt);
        \filldraw[gray!30] (0,4) circle (10pt);
    \draw (0,4) circle (10pt);
        \filldraw[gray!30] (6,4) circle (10pt);
    \draw (6,4) circle (10pt);
    \filldraw[red] (0,0.35) circle (2pt);
    \filldraw[red] (0,-0.35) circle (2pt);
    \filldraw[red] (6,0.35) circle (2pt);
    \filldraw[red] (6,-0.35) circle (2pt);
    \filldraw[red] (6,4.35) circle (2pt);
    \filldraw[red] (6,3.65) circle (2pt);
    \filldraw[red] (0,4.35) circle (2pt);
    \filldraw[red] (0,3.65) circle (2pt);
    \draw[red] (0,0.35)--(0,3.65) node[pos=0.5, left]{5};
    \draw[red] (6,0.35)--(6,3.65) node[pos=0.5, right]{5};
    \filldraw[red] (3,4) circle (2pt);
    \filldraw[red] (3,0) circle (2pt);
    \draw[red] (0,4.35) to [out=20, in=150] (3,4);
    \draw[red] (3,4) to [out=-20, in=190] (6,3.65);
     \draw[red] (0,0.35) to [out=20, in=150] (3,0);
    \draw[red] (3,0) to [out=-20, in=190] (6,-0.35);
    \draw[red] (6,3.65) to [out=210, in=190] (6,-0.35);
    \node at (4,2.5){\textcolor{red}{$\times$}};
    \draw[red] (4,2.5) -- (6,3.65) node[pos=0.25, right]{4};
    \node at (1.6,4.8){\textcolor{red}{1}};
    \node at (1.6,0.8){\textcolor{red}{1}};
    \node at (4.4,-0.1){\textcolor{red}{2}};
    \node at (4.4,3.9){\textcolor{red}{2}};
    \node at (4.8,2){\textcolor{red}{3}};
    \node (3) at (8,2){3};
    \node (2) at (9.5,2){2};
    \node (1) at (11,2){1};
    \node (4) at (9.5,3.5){4};
    \node (5) at (9.5,0.5){5};
    \draw [->] (4.west)arc(240:-30:10pt)node[pos=0.8, right]{$f_4$};
    \draw[->] (3)--(2);
    \draw[->] (2) to [out=30, in=150] (1);
    \node at (8.75,1.75){$\alpha$};
    \node at (10.25,2.5){$\beta$};
    \draw[->] (1) to [out=-140, in=-40] (2);
    \node at (10.25,1.8){$\gamma$};
    \draw[->] (2)--(4) node[pos=0.5, right]{$\delta$};
    \draw[->] (4)--(3) node[pos=0.4, left]{$\varepsilon$};
    \draw[->] (3)--(5) node[pos=0.6, left]{$\lambda$};
    \draw[->] (1)--(5)node[pos=0.6, right]{$\omega$};
\end{tikzpicture}
\caption{A \textcolor{red}{$\times$}-dissection of the torus with one boundary component and $Q$ defined by this dissection.}
\label{fig: ejemplo sec 5}
\end{figure}

The quiver $Q_{\Gamma^{\times}}$ is obtained from the \textcolor{red}{$\times$}-dissection by adding to $Q$ a new arrow $\beta_{p_j}$ from $t(p_j)$ to $s(p_j)$ for each path $p_j$ arising from a non-trivial polygon $P_j$. That is, we complete a cycle for each $sp$-maximal path. Then we remove special loops $f_i$ and decorate the corresponding vertices $i^{\times}$. 

The family of special cycles $\mathcal{C}$ is formed by all (the cyclic permutations of) cycles $C_{P_j} = p_j \beta_{p_j}$, arising from non-trivial (degenerate) polygons. This is enough to define the admissible trivial extension $B_{\Gamma^{\times}}$. We see the procedure in the following example.

\begin{exmp} \label{example-torus} The \textcolor{red}{$\times$}-dissection in Figure \ref{fig: ejemplo sec 5} is formed by two polygons. The polygon $P_1$ is delimited by edges $1,2,3,2,1,5$ and boundary, and the polygon $P_2$ is delimited by edges $3,5$ and boundary. The quiver $Q$ for the \textcolor{red}{$\times$}-dissection is the one in Figure~\ref{fig: ejemplo sec 5} (left). In this case $P_1$ contains the maximal path $p_1 = \gamma \delta \varepsilon \alpha \beta \omega$ and generates a new arrow $\beta_{p_1}$ (in red) and $P_2$ contains the maximal path $ p_2 = \lambda$ and generates the new arrow $\beta_{p_2}$ (in blue). For this example, the quivers $Q_{\Gamma^{\times}}$ and $Q^{\mathrm{sg}}_{\Gamma^{\times}}$ are given in Figure~\ref{fig: ejemploB}.

\begin{figure}[ht]
\begin{center}
    \begin{tikzcd}
& 4^{\times} \arrow[ld, swap,"\varepsilon",red]  &  \\
3 \arrow[r,"\alpha",red] \arrow[rd,"\lambda", blue] & 
2 \arrow[r, bend left, "\beta", red] \arrow[u,"\delta", red] & 1 \arrow[l, bend left,swap,"\gamma", red] 
\arrow[ld,"\omega",red] \\
 & 5 \arrow[lu, bend left=40, "\beta_{p_2}", blue] \arrow[ru, bend right=40, swap,"\beta_{p_1}",red]&         
 \end{tikzcd} 
 \hspace{18pt}
 \begin{tikzcd}
& 4^+ \arrow[ld, "{}^+\varepsilon "'] && \\
3 \arrow[rr,bend right, "\alpha"] \arrow[rrd, "\lambda", bend right=20] & 4^- \arrow[l, "^{-}\varepsilon"']   & 2 \arrow[lu, "\delta^+"'] \arrow[l, "\delta^-"'] \arrow[r, "\beta", bend left] & 1 \arrow[ld, "\omega",bend left=5] \arrow[l, "\gamma", bend left] \\ && 5 \arrow[llu, "\beta_{p_2}",bend left=45] \arrow[ru, "\beta_{p_1}", swap,bend right=40] &          
\end{tikzcd}
 \end{center}
\caption{Quiver $Q_{\Gamma^{\times}}$ for a \textcolor{red}{$\times$}-dissection and quiver $Q^{\mathrm{sg}}_{\Gamma^{\times}}$}
\label{fig: ejemploB}
\end{figure}

The trivial extension for $A=A(Q,I, \mathrm{Sp})$ defined by the \textcolor{red}{$\times$}-dissection is $B_{\Gamma^{\times}} =KQ^{\mathrm{sg}}_{\Gamma^{\times}}/I_{\Gamma^{\times}}$, where  $I_{\Gamma^{\times}}$ arises from the cycles $C_{P_1}=p_1 \beta_1$  and $C_{P_2}=p_2 \beta_{p_2}$ and considering the skew-Brauer relations (see Section \ref{subsec-skew-brauer algebra}). In the next list of relations $* \in \{ +,-\}$.

Relation of type 0: $\delta^+ {}^{+}\varepsilon - \delta^- {}^{-}\varepsilon$.

Relations of type I. These relations arise from arcs shared by non-trivial polygons.
\begin{itemize}
    \item arc $3$ sits between $P_1$ and $P_2$: $\alpha \beta \omega\beta_{p_1} \gamma \delta^* {}^{*}\varepsilon - \lambda \beta_{p_2}$
    \item arc $5$, between $P_1$ and $P_2$: $\beta_{p_2} \lambda - \beta_{p_1} \gamma \delta^* {}^*\varepsilon \alpha \beta \omega$
    \item arc $1$, between $P_1$ and $P_1$: $\omega \beta_{p_1} \gamma \delta^* {}^*\varepsilon \alpha \beta - \gamma \delta^* {}^*\varepsilon \alpha \beta \omega \beta_{p_1}$
    \item arc $2$, between $P_1$ and $P_1$: $\delta^* {}^*\varepsilon \alpha \beta \omega \beta_{p_1} \gamma - \beta \omega \beta_{p_1} \gamma \delta^* {}^*\varepsilon \alpha  $
\end{itemize}
Relations of type IIa.
\begin{multicols}{3}
    \begin{itemize}
        \item $\lambda \beta_{p_2} \lambda$
        \item $\beta_{p_2} \lambda \beta_{p_2} $
        \item $ \gamma \delta^* {}^*\varepsilon \alpha \beta \omega \beta_{p_1} \gamma$
        \item $\delta^* {}^*\varepsilon \alpha \beta \omega \beta_{p_1} \gamma \delta^*$
        \item $\alpha \beta \omega \beta_{p_1} \gamma \delta^* {}^*\varepsilon \alpha  $
        \item $ \beta \omega \beta_{p_1} \gamma \delta^* {}^*\varepsilon \alpha  \beta $
        \item $\omega \beta_{p_1} \gamma \delta^* {}^*\varepsilon \alpha  \beta \omega$
        \item $\beta_{p_1} \gamma \delta^* {}^*\varepsilon \alpha  \beta \omega \beta_{p_1}$
    \end{itemize}
    \end{multicols}
Relations of type IIb: $\delta^- {}^{-}\varepsilon \alpha \beta \omega \beta_{p_1} \gamma \delta^{+}$ and $\delta^+ {}^{+}\varepsilon \alpha \beta \omega \beta_{p_1} \gamma \delta^{-}$.

Relations of type III: 
$ \beta \gamma, \gamma \beta, \alpha \delta^*, {}^{*}\varepsilon \lambda$, $\beta_{p_1} \omega$, $\beta_{p_2} \alpha$, $\lambda \beta_{p_1}$, $\omega \beta_{p_2}$.

 \begin{figure}[h]
 \centering
 \begin{tikzpicture}[scale=0.8]
    \node at (1.3,0.6) {$\times$};
    \draw (0,0) to  node[pos=0.5, above]{\tiny{4}}(1.3,0.6);
    \draw[rotate=30] (-0.5,0) ellipse (0.5cm and 1cm);
    \draw (0,0) to  node[pos=0.5, above]{\tiny{3}}(3,0);
    \draw (0,0) to[out=80,in=170] (2.7,1.3);  
    \draw (2.7,-1)to[out=-170,in=-60] (0,0); 
    \draw (2.7,1.3) to[out=-10, in=90] (3.5, 0);
    \draw (2.7,-1 )to[out=10, in=-90] (3.5,0);
    \filldraw[white] (0.2,-0.95) circle(3pt);
     \filldraw[white] (3.3,-0.65) circle(3pt);
    \draw (0,0) to[out=-120, in=-150] (2.8,-1.5) to[out=30, in=-10] (3,0);
    \filldraw[red] (0,0) circle(3pt);
    \filldraw[blue] (3,0) circle(3pt);
    \node at (1.7,1.2){\tiny{2}};
    \node at (1.7,-1.8){\tiny{5}};
    \node at (-0.8,-0.8){\tiny{1}};
 \end{tikzpicture}
 \caption{Skew-Brauer graph.}
 \label{fig: brauer sec 5}
 \end{figure}
\end{exmp}
The skew-Brauer graph for Example \ref{example-torus} is depicted in Figure \ref{fig: brauer sec 5}.

We summarize the construction proposed in this subsection in the next proposition.

\begin{prop}\label{prop: geoemtric definition} 
Each \textcolor{red}{$\times$}-dissection $(S,M, P, \orbi, D^{\times})$ defines a skew-gentle algebra $A=\SG$. Moreover, the quiver and relations for its trivial extension $T(A^{\mathrm{sg}})$ can be obtained from the dissection. 
\end{prop}

\begin{proof}
    The \textcolor{red}{$\times$}-dissection describes a skew-gentle algebra $A=\SG$. The ideal $I$ is given so that each (non-trivial) polygon is associated to a $sp$-maximal path $\widetilde{p_i}$. We define a new arrow $\beta_{p_i}$ for each of these paths as in Theorem \ref{teo:ideal relaciones}, we set the $i^{\times}$ as special vertices and we define the special cycles (up to cyclic permutation) as $p_i \beta_{p_i}$. With all this we get the quiver and relations for $T(A^{\mathrm{sg}}) = B_{\Gamma^{\times}_A}$.
\end{proof}

\subsection{Contracting and adding boundary segments.}\label{subsec:good cuts}

Let $\mathcal{X} = (S,M, P, \orbi, D^{\times})$ be a \textcolor{red}{$\times$}-dissection that splits the surface  into (degenerate) polygons, where $P_1, \dots, P_n$ denote the non-trivial polygons.
We provide a geometric characterization of good cut sets for the trivial extension of skew-gentle algebras by defining a two-step operation.

For an arbitrary fixed non-trivial polygon $P_i$ and either a fixed internal angle $\theta$ between two arcs of $P_i$ or a fixed puncture $x$ that is the ending point of a self-folded arc of $P_i$ (and no other arcs), we define the addition-contraction at $(P_i,\theta)$ or $(P_i, x)$, respectively, as follows.

\textbf{(Step 1: Contraction)} For the polygon $P_i$, there is exactly one edge, say $b$, that is a boundary segment. We note that $b$ can be a boundary component with one marked point. If $b$ is a boundary segment with two different endpoints, then we contract $b$ by identifying the points and obtaining a vertex $\widetilde{x}$. If $b$ is a boundary component with one marked point, then we contract $b$ to a single marked point $\widetilde{x}$. In both cases, we obtain a polygon $\widetilde{P}_i$ with no boundary segment as an edge.

\textbf{(Step 2: Addition for $(P_i,\theta)$)} For the internal angle $\theta$ in $\widetilde{P}_i$ opposite to a vertex $y$ of $\widetilde{P_i}$, we add a boundary (segment or component) as follows.

\begin{enumerate}
    \item if $y\in M$, we add a new boundary segment, splitting the red marked point (see Figure~\ref{Fig:add-contrac2} left);
    \item when $y\in P$, we add a new boundary component in $\theta$ having $y$ as a marked point (see Figure~\ref{Fig:add-contrac2} center).
  
\end{enumerate}

\textbf{(Step 2: Addition for $(P_i,p)$)} If $x\in P$ is the ending point of a self-folded arc of $P_i$ (and no other arcs), we replace $x$ by a boundary component, having $x$ has a marked point (Figure~\ref{Fig:add-contrac2} right).

\begin{figure}[H]
\centering
\begin{tikzpicture}[scale=0.80]
\draw (-1.7,1.2) to (-1.7, -2);
\draw (-1.7,1.2) to (17.5,1.2);
\draw (17.5,1.2) to (17.5,-2);
\draw (-1.7,-2) to (17.5,-2);
        \filldraw (0,0) circle (1pt) node[above] {\tiny$y$};
        \filldraw (-1,-0.5) circle (1pt);
        \filldraw (1,-0.5) circle (1pt);
        \draw[dashed] (-1.5,0.45) to [bend right] (1.5,0.45);
        \draw (-.1,-.15) to[bend right] (.1,-.15);
        \draw (0,0) to[bend left] (-1,-0.5);
        \draw (0,0) to[bend right] (1,-0.5);
        \draw (-1,-0.5) to[bend left] (-0.7, -1.3);
        \draw (1,-0.5) to[bend right] (0.7,-1.3);
        \node at (0,-1.5) {$\dots$};
        \node at (-0.5,-0.7) {\tiny$\widetilde{P_i}$};
       \node at (0, -.4) {\tiny$\theta$};
   \filldraw (3,0.2) circle (1pt) node[above] {\tiny$y$};
        \filldraw (4.5,0.2) circle (1pt) node[above] {\tiny$y'$};
        \filldraw (4.7,-0.6) circle (1pt);
        \filldraw (2.6,-0.6) circle (1pt);
        \draw[dashed] (2.3,0.26) ..controls (3.5, 0.1).. (5,0.26);
        \draw (3,0.2) to[bend left] (2.6,-0.6);
        \draw (2.6,-0.6) to[bend left] (3.1,-1.3);
        \draw (4.5,0.2) to[bend right] (4.7, -0.6);
        \draw (4.7,-0.6) to[bend right] (4.2,-1.3);
        \node at (3.7,-0.6) {\tiny$P_i'$};
       \node at (3.7,-1.3) {\tiny$\dots$};
       \draw (5.2,1.2) to (5.2, -2);
        \filldraw (7,0) circle (1pt) node[above] {\tiny$y$};
        \filldraw (6,-0.5) circle (1pt);
        \filldraw (8,-0.5) circle (1pt);
        \node at (7, -.4) {\tiny$\theta$};
        \draw (6.9,-.15) to[bend right] (7.1,-.15);
        \draw (7,0) to[bend left] (6,0.2);
       \draw (7,0) to[bend right] (8,0.2);
        \draw (7,0) to[bend left] (6,-0.5);
        \draw (7,0) to[bend right] (8,-0.5);
        \draw (6,-0.5) to[bend left] (6.3, -1.3);
        \draw (8,-0.5) to[bend right] (7.7,-1.3);
        \node at (7,-1.5) {$\dots$};
        \node at (6.5,-0.7) {\tiny$\widetilde{P_i}$};
        \node at (6.2,-0.2) {\tiny$\vdots$};
        \node at (7.8,-0.2) {\tiny$\vdots$};
        \filldraw (10,0) circle (1pt) node[above] {\tiny$y$};
        \filldraw (9,-0.5) circle (1pt);
        \filldraw (11,-0.5) circle (1pt);
        \draw (10,0) to[bend left] (9,0.2);
       \draw (10,0) to[bend right] (11,0.2);
        \draw (10,0) to[bend left] (9,-0.5);
        \draw (10,0) to[bend right] (11,-0.5);
        \draw (9,-0.5) to[bend left] (9.3, -1.3);
        \draw (11,-0.5) to[bend right] (10.7,-1.3);
        \draw[pattern=north west lines, pattern color=gray, densely dashed] (10,-.35) ellipse (0.1cm and 0.3 cm);
        \node at (9.5,-0.7) {\tiny${P_i}'$};
        \node at (9.2,-0.2) {\tiny$\vdots$};
        \node at (10.8,-0.2) {\tiny$\vdots$};
    \draw (11.5,1.2) to (11.5, -2);
        \filldraw (16,0) circle (1pt);
        \filldraw (15,-0.5) circle (1pt);
        \filldraw (17,-0.5) circle (1pt);
        \draw (16,0) to[bend left] (15,0.2);
       \draw (16,0) to[bend right] (17,0.2);
       \draw (16,0) to[bend left] (15,-0.5);
        \draw (16,0) to[bend right] (17,-0.5);
        \draw (15,-0.5) to[bend left] (15.3, -1.3);
       \draw (17,-0.5) to[bend right] (16.7,-1.3);
        \node at (16,-1.5) {$\dots$};
        \draw (16,0) to (16,-0.8);
       \filldraw (16,-0.8) circle (1pt) node[right] {\tiny$x$};
        \node at (15.5,-0.7) {\tiny${P_i}'$};
        \draw[pattern=north west lines, pattern color=gray, densely dashed] (16,-1) circle (0.2cm);
        \node at (15.2,-0.2) {\tiny$\vdots$};
        \node at (16.8,-0.2) {\tiny$\vdots$};
        \filldraw (13,0) circle (1pt);
        \filldraw (12,-0.5) circle (1pt);
        \filldraw (14,-0.5) circle (1pt);
       \draw (13,0) to (13,-0.8);
       \filldraw (13,-0.8) circle (1pt) node[below] {\tiny$x$};
        \draw (13,0) to[bend left] (12,0.2);
       \draw (13,0) to[bend right] (14,0.2);
        \draw (13,0) to[bend left] (12,-0.5);
        \draw (13,0) to[bend right] (14,-0.5);
        \draw (12,-0.5) to[bend left] (12.3, -1.3);
        \draw (14,-0.5) to[bend right] (13.7,-1.3);
        \node at (12.5,-0.7) {\tiny$\widetilde{P_i}$};
        \node at (12.2,-0.2) {\tiny$\vdots$};
        \node at (13.8,-0.2) {\tiny$\vdots$};
        \end{tikzpicture}
\caption{Adding a boundary segment at $\theta$ or $x$.}
\label{Fig:add-contrac2}
\end{figure}

In the two cases, we obtain a new polygon $P_i'$ with exactly one edge that is a boundary segment.
We denote the resulting \textcolor{red}{$\times$}-dissection by: $\tau_{(P_i, \theta)}\mathcal{X}$ or $\tau_{(P_i, x)}\mathcal{X}$, respectively. The new dissection splits $S$ into (degenerate) polygons, and the set of non-trivial polygons is $\{P_1, \dots, P_m\}\setminus P_i\cup \{P_i'\}$.

\begin{exmp} A \textcolor{red}{$\times$}-dissection for the annulus and the new \textcolor{red}{$\times$}-dissection after a contraction-addition at $(P_2,\theta)$.
\begin{figure}[h!]
\centering
\begin{tikzpicture}[scale=0.8]
    \draw (-5,0) circle (2cm);
    \filldraw[lightgray] (-5,0) circle (0.2cm);
    \draw (-5,0) circle (0.2cm);
    \draw[red] (-5,-0.2) ellipse (0.3cm and 0.4 cm);
    \filldraw[red] (-7,0) circle (2pt);
    \filldraw[red] (-5,0.2) circle (2pt);
    \filldraw[red] (-3, 0.3) circle (2pt);
    \node at (-4,1.2){\textcolor{red}{\tiny{$\times$}}};
    \node at (-4.3,0.7){\textcolor{red}{\tiny{$\times$}}};
    \draw[red] (-5,0.2) to[bend right] (-7,0);
    \draw[red] (-5,0.2) to[bend right] (-3,0.3);
    \draw[red] (-3,0.3) to (-4,1.2);
    \draw[red] (-3,0.3) to (-4.3,0.7);
    \node at (-3.9,-0.2){\tiny{3}};
    \node at (-3.9,0.4){\tiny{2}};
    \node at (-3.9,0.9){\tiny{1}};
    \node at (-5.9,0.2){\tiny{4}};
    \node at (-5,-0.8){\tiny{5}};
    \node at (-5,-1.5){\tiny{$P_1$}};
    \node at (-5,1){\tiny{$P_2$}};
    \draw[dashed] (-5.3,0.3) to [out=40, in=100] (-4.65,0.1);
    \node at (-5, 0.6){\tiny{$\theta$}};
    \draw[red] (0,-0.3) ellipse (0.3cm and 0.4 cm);
    \draw[red] (0,0.95) ellipse (0.6cm and 1.05 cm);
    \draw (0,0) circle (2cm);
    \filldraw[lightgray] (0,0) circle (0.2cm);
    \draw (0,0) circle (0.2cm);
    \filldraw[red] (-0.2,0) circle (2pt);
    \filldraw[red] (0.2,0) circle (2pt);
    \filldraw[red] (0, 2) circle (2pt);
    \node at (-0.2,1){\textcolor{red}{\tiny{$\times$}}};
    \node at (0.2,1){\textcolor{red}{\tiny{$\times$}}};
    \draw[red] (-0.2,1) to (0,2);
    \draw[red] (0.2,1) to (0,2);
    \node at (-0.2,1.4){\tiny{1}};
    \node at (0.2,1.4){\tiny{2}};
    \node at (0.8,1){\tiny{3}};
    \node at (-0.8,1){\tiny{4}};
    \node at (0,-0.8){\tiny{5}};
    \node at (0,-1.5){\tiny{$P_1$}};
    \node at (0,0.4){\tiny{$P_2'$}};
\end{tikzpicture}
{\begin{tikzcd}
2  \arrow[loop left, "f_1"] \arrow[r, "\alpha_2"] & 3 \arrow[d, "\alpha_3"]  & 5 \arrow[l, "\beta_1"'] &  & 2 \arrow[r, "\alpha_2"]  \arrow[loop left, "f_1"] & 3   & 5 \arrow[l, "\beta_1"'] \\
1 \arrow[u, "\alpha_1"]  \arrow[loop left, "f_2"] & 4 \arrow[ru, "\beta_3"'] &    &  & 1  \arrow[loop left, "f_2"]\arrow[u, "\alpha_1"] & 4 \arrow[l, "\alpha_4"] \arrow[ru, "\beta_3"'] &         \end{tikzcd}}
\caption{A \textcolor{red}{$\times$}-dissection $\mathcal{X}$ and the new \textcolor{red}{$\times$}-dissection  $\tau_{(P_2, \theta)} \mathcal{X}$.}
\label{fig: ejemplo sec 5 cut}
\end{figure}

Observe that both \textcolor{red}{$\times$}-dissections in Figure~\ref{fig: ejemplo sec 5 cut}, generate the skew-Brauer graph algebra $B$ (with $\mf =1$) from Example \ref{example-1}. 
Each one of two \textcolor{red}{$\times$}-dissections defines a skew-gentle algebra that is a good cut (Definition \ref{def:good cut}) for $B$.
\end{exmp}

\begin{thm}\label{thm:contracting and adding}
Let $A$ be a skew-gentle with $\mathrm{Sp}$ the set of special vertices, and $\mathcal{X}=(S,M, P, \orbi, D^{\times})$ its \textcolor{red}{$\times$}-dissection and let $\mathcal{D}$ be a good cut set of $T(A^{\mathrm{sg}})$. Then there is a sequence of contraction-addition moves taking $\mathcal{X}$ to $\widetilde{\mathcal{X}}$, where $\widetilde{\mathcal{X}}$ is the \textcolor{red}{$\times$}-dissection of the quotient algebra of $T(A^{\mathrm{sg}})$ induced by $\mathcal{D}$.
\end{thm}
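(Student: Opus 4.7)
The plan is to construct the sequence of contraction-addition moves explicitly from the data of $\mathcal{D}$, exploiting the natural correspondence between non-trivial polygons of $\mathcal{X}$ and elementary cycles of $T(A')$. By Definition~\ref{def:good cut}, $\mathcal{D}$ is induced by a cut set $\mathcal{D}'$ of the auxiliary Brauer graph algebra $T(A')$, which by Remark~\ref{rem:cut set} picks exactly one arrow in each elementary cycle of $T(A')$. From the construction in Section~\ref{sec surface disetions}, each non-trivial (degenerate) polygon $P_i$ in $\mathcal{X}$ produces one $sp$-maximal path $p_i$ in $A$, whose associated maximal path $p_i'$ in $A'$ yields a unique elementary cycle $C_{P_i}=p_i'\beta_{p_i'}$ in $T(A')$, and this assignment $P_i\mapsto C_{P_i}$ is a bijection.

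Next, I would read the arrows in $C_{P_i}$ geometrically: each arrow of $p_i'$ corresponds either to an internal angle of $P_i$ or to a pendant/special arc inside $P_i$, while the connecting arrow $\beta_{p_i'}$ corresponds to the unique boundary segment of $P_i$. For each $P_i$ there is a unique arrow $\alpha_i\in\mathcal{D}'\cap C_{P_i}$. If $\alpha_i=\beta_{p_i'}$, leave $P_i$ unchanged; otherwise let $\theta_i$ be the internal angle (resp.\ interior pendant marked point $p_i$) of $P_i$ corresponding to $\alpha_i$, and apply the contraction-addition $\tau_{(P_i,\theta_i)}$ (resp.\ $\tau_{(P_i,p_i)}$). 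Since $\tau_{(P_i,\theta_i)}$ modifies only the boundary segment of $P_i$ and leaves all other arcs of $\mathcal{X}$ untouched, these moves operate on disjoint regions and can be performed in any order, producing a well-defined $\times$-dissection $\widetilde{\mathcal{X}}$.

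Finally, I would verify that $\widetilde{\mathcal{X}}$ is the $\times$-dissection of $T(A^{sg})/\langle\mathcal{D}\rangle$. Each new non-trivial polygon $P_i'$ has its boundary segment placed precisely at the location of $\alpha_i$, so applying the rules (r1)--(r3) of Section~\ref{sec surface disetions} to $\widetilde{\mathcal{X}}$ yields a skew-gentle algebra whose $sg$-quiver is that of $T(A^{sg})$ with all the $\{+,-,\emptyset\}$-decorated versions of the arrows in $\mathcal{D}'$ removed, which is exactly $\mathcal{D}$. The matching of relations will follow from Theorem~\ref{teo:ideal relaciones}: the commutative relations of type~(b) involving the cycle $C_{P_i}$ and the monomial relations of type~(3) created by the new arrow $\beta_{p_i'}$ disappear together with $\alpha_i$, while the remaining type~(a), (c), and (d) relations are preserved because the arcs and angles not adjacent to the moved boundary segment are not affected.

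The main obstacle will be the careful bookkeeping in polygons that are incident to an orbifold point or a pendant arc: here the single arrow $\alpha_i\in\mathcal{D}'$ lifts in $\mathcal{D}$ to several decorated arrows $\,{}^{\varepsilon}\alpha_i^{\varepsilon'}$, and one must check that the single geometric move $\tau_{(P_i,\theta_i)}$ simultaneously removes all of them and reproduces the $sg$-admissible version of the quiver and ideal coming from the contracted polygon. This is precisely where the hypothesis that $\mathcal{D}$ is \emph{good} (rather than an arbitrary cut set of $T(A^{sg})$) is essential, mirroring the mechanism used in the proof of Theorem~\ref{thm:admissiblecuts}.
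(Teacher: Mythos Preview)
Your proposal is correct and follows essentially the same approach as the paper: use Definition~\ref{def:good cut} to pass to an admissible cut $\mathcal{D}'$ of $T(A')$, invoke the bijection between non-trivial polygons and elementary cycles (via Lemma~\ref{lem:maximal-paths AyA'}), read each $\alpha_i\in\mathcal{D}'$ geometrically as an angle, a pendant-arc endpoint, or the boundary segment of $P_i$, and apply the corresponding contraction-addition (doing nothing in the boundary-segment case). Your write-up is in fact more detailed than the paper's in justifying why the moves can be performed independently and in checking that the resulting $\times$-dissection recovers the quotient algebra; one small slip is that an arrow of $p_i'$ cannot correspond to a \emph{special} arc, since $p_i'$ lives in the auxiliary gentle algebra $A'$ which has no special loops, so only internal angles and (non-special) pendant arcs occur.
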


\begin{proof}
    Let $\mathcal{D}$ be a good cut of $T(A^{\mathrm{sg}})$. By definition there exists an admissible cut $\mathcal{D}'$  of the trivial extension $T(A')$ where $A'$ is the associated gentle algebra of $A$. By \cite[Theorem 1.3]{Sch15}, $\mathcal{D}'$ is a set of arrows containing exactly one arrow from each one of the special cycles of $T(A')$, which by Lemma~\ref{lem:maximal-paths AyA'} are in bijection with the non-trivial (degenerate) polygons of $\mathcal{X}$. To fix notation, denote by $\alpha_{i}$ the arrow of $\mathcal{D}'$ associated with the non-trivial polygon $P_i$.
    
    Each arrow $\alpha_{i}$ of $\mathcal{D}'$ corresponds to either an internal angle $\theta_i$ of a polygon $P_i$ opposite to a vertex $x\in M\cup P$, or to a puncture $x_i$ that is the ending point of a self-folded arc of $P_i$ (and no other arcs) or corresponds to a boundary segment of $P_i$. Only in the first two cases, we need to apply a contraction-addition at $(P_i,\theta_i$ or at $(P_i, x_i)$.

    Since $\mathcal{D}'$ is an admissible cut of $T(A')$, there are no repeated polygons in the list of contraction-addition at  $(P_i, \theta_i)$ or $(P_i, x_i)$ induced by the arrows of $\mathcal{D}'$. Then we can define a sequence $\tau$ of contraction-addition, and it is well-defined. Finally, by construction $T(A^{\mathrm{sg}})/\langle \mathcal{D\rangle}$ is isomorphic to the skew-gentle algebra obtained from $\tau \mathcal{X}$. \end{proof}

\subsection{On derived equivalent skew-gentle algebras.}\label{subsec:derived-eq}

The following example shows that contraction-addition does not necessarily induces derived equivalent skew-gentle algebras.

\begin{figure}[h]
\centering
\begin{tikzpicture}
    \draw (-5,0) circle (1.5cm);
    \filldraw[lightgray] (-5,0) circle (0.2cm);
    \draw (-5,0) circle (0.2cm);
    \filldraw[red] (-5,-1.5) circle (2pt);
    \filldraw[red] (-5,0.2) circle (2pt);
    \node at (-6,0){\textcolor{red}{$\times$}};
    \draw[red] (-6,0) to[bend right] (-5,-1.5);
    \node at (-5.7,-0.65){\tiny{3}};
    \node at (-5.3,-0.65){\tiny{2}};
    \node at (-4.5,-0.65){\tiny{1}};
    \node at (-5,-0.5){\tiny{$P_1$}};
    \node at (-5,1){\tiny{$P_2$}};
    \draw[dashed] (-5.2,-1.2) to [bend left] (-4.75,-1.2);
    \node at (-5, -1){\tiny{$\theta$}};
    \draw [red] (-5,-1.5) to[out=150, in=150] (-5,0.2);
    \draw [red] (-5,-1.5) to[out=30, in=40] (-5,0.2);
    \draw (0,0) circle (1.5cm);
    \filldraw[red] (220:1.5) circle (2pt);
    \filldraw[red] (0,0) circle (2pt);
    \filldraw[red] (310:1.5) circle (2pt);
    \node at (-0.4,0.7){\textcolor{red}{$\times$}};
    \draw[red] (220:1.5) to[bend left] (-0.4,0.7);
    \draw[red] (220:1.5) to (0,0);
    \draw[red] (310:1.5) to (0,0);
    \node at (-0.8,0.1){\tiny{3}};
    \node at (-0.8,-0.35){\tiny{2}};
    \node at (0.6,-0.35){\tiny{1}};
    \node at (0,-0.6){\tiny{$P_1'$}};
    \node at (0,1){\tiny{$P_2$}};
\end{tikzpicture}
\caption{\textcolor{red}{$\times$}-dissection $\mathcal{X}$ (left) and the new \textcolor{red}{$\times$}-dissection $\tau_{(P_1,\theta)} \mathcal{X}$ (right).}
\label{fig: ejemplo_derived_eq}
\end{figure}

\begin{exmp}
Let $A$ and $B$ the skew-gentle algebras associated to the $\textcolor{red}{\times}$-dissections depicted in Figure~\ref{fig: ejemplo_derived_eq}, with non-trivial polygons $P_1$ and $P_2$ and $P_1'$ and $P_2$, respectively. One can see that $A$ and $B$ are related by an contraction-addition at $(P_1,\theta)$, and even more the \textcolor{red}{$\times$}-dissections are induced by good cuts of a skew-Brauer graph algebra with multiplicity $\mf =1$.

Let $A$ be the skew-gentle algebra defined by $\mathcal{X}$ in Figure \ref{fig: ejemplo_derived_eq}, and consider $B$ defined by $\tau_{(P_1,x)} \mathcal{X}$.

\begin{center}

\begin{tikzcd}
1 \arrow[r, "\beta_1"', bend right] \arrow[r, "\alpha_1", bend left] & 2 \arrow[r, "\alpha_2"] & 3 \arrow["f_3"', loop, distance=2em, in=35, out=325] &  & 1 \arrow[r, "\alpha_1", bend left] & 2 \arrow[l, "\beta_2", bend left] \arrow[r, "\alpha_2"] & 3 \arrow["f_3"', loop, distance=2em, in=35, out=325]
\end{tikzcd}
\end{center}
Hence $A$ is defined by the quiver on the left and taking the ideal $\langle \beta_1 \alpha_2, f_3^2-f_3\rangle$, while $B$ is defined by the quiver on the right considering the ideal $\langle \beta_2 \alpha_1, \alpha_1 \beta_2, f_3^2-f_3\rangle$.

Following \cite[Theorem 7.3]{LSV} and looking at the \textcolor{red}{$\times$}-dissections\footnote{Theorem 7.3 in \cite{LSV} can be re stated $\mathrm{det}\, (C_A)_q = \prod_{k \geq 1} (1- (-q)^k)^{c_k}$ where $c_k$ is the number of red punctures with $k$ incident red arcs.}, it is easy to see that the determinants of their $q$-Cartan matrices are $\mathrm{det}\, (C_A)_q = 1$ and $\mathrm{det}\, (C_B)_q = 1-q^2$. The ordinary Cartan matrices are obtained by evaluating $q=1$, hence we can find their determinants $\mathrm{det}\, (C_A) = 1$ and $\mathrm{det}\, (C_B) = 0$. Since these determinants are derived invariants \cite[Proposition 1.5]{BSk}, we conclude that $A$ and $B$ are not  derived equivalent.
\end{exmp}

\subsection{Reflections as geometric moves on a \textcolor{red}{$\times$}-dissection}
By Theorem~\ref{thm:cuts and reflections} and  Theorem~\ref{thm:contracting and adding}, any sequence of reflections in a skew-gentle algebra $A$ can be interpreted as a sequence of contraction-addition on its \textcolor{red}{$\times$}-dissection. In this section, we present another interpretation of reflections as local moves in \textcolor{red}{$\times$}-dissections. We will present examples of reflections on \textcolor{red}{$\times$}-dissections and show how these moves behave locally. 

Let $A$ be a skew-gentle algebra defined by a \textcolor{red}{$\times$}-dissection. Let $y$ be a vertex that is a source or a target in $Q_{A'}$, where $A'$ is the auxiliary gentle algebra of $A$. Such a vertex will be associated to a red arc $y$ that can be a special arc, as is Figure~\ref{fig: ejemplo_refl} right, or a regular red arc. If the vertex $y$ is a target, the arc $y$ is such that if we pivot counterclockwise on one of its endpoints, we meet the boundary edge of a polygon. See Figure~\ref{fig: ejemplo_refl} left.

Let $y$ be a non-special target vertex. Consider $p_a$ and $p_b$ the maximal paths in $A'$ ending at $y$. See that one of these paths might be trivial, that is the case when one of the polygons having $y$ as an edge is a trivial polygon. Let $\beta_{p_1}$  and $\beta_{p_2}$ the new arrows in $Q_{\widehat{A'}}$, then $S^+_y(A)$ is given by a quiver and the (non-admissible) ideal defined by a new \textcolor{red}{$\times$}-dissection, exchanging locally the vertex $y$ and the vertex $y'$ as in the next figure. If $y$
is special, we see the local exchange $S^-_y (A)$ in the right part of the figure. The negative and positive reflections for non-special and special vertices are analogous. 

\adjustbox{scale=0.9,center}{
\begin{tikzcd}
a_1 \arrow[r] & a_2 \arrow[r] & \cdots \arrow[r]  & a_t \arrow[rd] &   \\
b_1 \arrow[r] & b_2 \arrow[r] & \cdots  \arrow[r] & b_m \arrow[r]  & y
\end{tikzcd} \hspace{10pt} \begin{tikzcd}
y \arrow[loop, distance=1.5em, in=125, out=55] \arrow[r] & a_1 \arrow[r] & \cdots  \arrow[r] & a_t  
\end{tikzcd}
}

\adjustbox{scale=0.9,center}{
\begin{tikzcd}
a_1 \arrow[r] & a_2 \arrow[r] & \cdots \arrow[r]  & a_t &                                                                                   \\
b_1 \arrow[r] & b_2 \arrow[r] & \cdots  \arrow[r] & b_m & y' \arrow[llllu, "\beta_{p_a}"] 
\arrow[llll, "\beta_{p_b}", bend left=15, shift left]\end{tikzcd} \hspace{10pt} \begin{tikzcd}
y' \arrow[loop, distance=1.5em, in=125, out=55]  
& a_1 \arrow[r] & \cdots \arrow[r]  & a_t \arrow[lll, bend left, shift left]
\end{tikzcd}
}

\begin{figure}[h]
\centering
\begin{tikzpicture}
    \filldraw[red] (0,1.5) circle (2pt);
    \filldraw[red] (1.5,1.5) circle (2pt);
    \filldraw[red] (2.5,1.5) circle (2pt);
    \filldraw[red] (4,1.5) circle (2pt);
    \filldraw[red] (5.5,1.5) circle (2pt);
    \filldraw[red] (0,0) circle (2pt);
    \filldraw[red] (1.5,0) circle (2pt);
    \filldraw[red] (3,0) circle (2pt);
    \filldraw[red] (4,0) circle (2pt);
    \filldraw[red] (5.5,0) circle (2pt);
    \draw[red] (0,1.5) to[bend right] (1.5,1.5);
    \draw[black, dashed] (1.5,1.5) to[bend right] (2.5,1.5);
    \draw[red] (2.5,1.5) to[bend right] (4,1.5);
    \draw[red] (4,1.5) to[bend right] (5.5,1.5);
    \draw[red] (0,0) to[bend left] (1.5,0);
    \draw[red] (1.5,0) to[bend left] (3,0);
    \draw[black, dashed] (3,0) to[bend left] (4,0);
    \draw[red] (4,0) to[bend left] (5.5,0);
    \node at (0,0.85){\textcolor{red}{$\vdots$}};
    \node at (5.5,0.85){\textcolor{red}{$\vdots$}};
    \draw[red] (1.5,1.5) to (4,0);
    \node at (0.75,1.5){\tiny{$b_m$}};
    \node at (3.25,1.5){\tiny{$a_1$}};
    \node at (4.75,1.5){\tiny{$a_2$}};
    \node at (0.75,0){\tiny{$b_2$}};
    \node at (2.25,0){\tiny{$b_1$}};
    \node at (4.75,0){\tiny{$a_t$}};
    \node at (3.25,0.6){\tiny{$y$}};
    \draw[->] (1.7, 1.6) to[bend right](2.3,1.6);
    \draw[->] (3.8, -0.2) to[bend right](3.3,-0.2);
    \filldraw[red] (0,-1) circle (2pt);
    \filldraw[red] (1.5,-1) circle (2pt);
    \filldraw[red] (2.5,-1) circle (2pt);
    \filldraw[red] (4,-1) circle (2pt);
    \filldraw[red] (5.5,-1) circle (2pt);
    \filldraw[red] (0,-2.5) circle (2pt);
    \filldraw[red] (1.5,-2.5) circle (2pt);
    \filldraw[red] (3,-2.5) circle (2pt);
    \filldraw[red] (4,-2.5) circle (2pt);
    \filldraw[red] (5.5,-2.5) circle (2pt);
    \draw[red] (0,-1) to[bend right] (1.5,-1);
    \draw[black, dashed] (1.5,-1) to[bend right] (2.5,-1);
    \draw[red] (2.5,-1) to[bend right] (4,-1);
    \draw[red] (4,-1) to[bend right] (5.5,-1);
    \draw[red] (0,-2.5) to[bend left] (1.5,-2.5);
    \draw[red] (1.5,-2.5) to[bend left] (3,-2.5);
    \draw[black, dashed] (3,-2.5) to[bend left] (4,-2.5);
    \draw[red] (4,-2.5) to[bend left] (5.5,-2.5);
    \node at (0,-1.65){\textcolor{red}{$\vdots$}};
    \node at (5.5,-1.65){\textcolor{red}{$\vdots$}};
    \draw[red] (2.5,-1) to (3,-2.5);
    \node at (0.75,-1){\tiny{$b_m$}};
    \node at (3.25,-1){\tiny{$a_1$}};
    \node at (4.75,-1){\tiny{$a_2$}};
    \node at (0.75,-2.5){\tiny{$b_2$}};
    \node at (2.25,-2.5){\tiny{$b_1$}};
    \node at (4.75,-2.5){\tiny{$a_t$}};
    \node at (3,-1.75){\tiny{$y'$}};
    \filldraw[red] (7.5,1.5) circle (2pt);
    \filldraw[red] (9.5,1.5) circle (2pt);
    \filldraw[red] (7.5,0) circle (2pt);
    \filldraw[red] (8.5,-0.5) circle (2pt);
    \filldraw[red] (9.5,0) circle (2pt);
    \node[red] at (8.5, 0.5){$\times$};
    \draw[red] (7.5, 1.5) to[bend left] (7.5,0);
    \node at (8, -0.2){\textcolor{red}{$\ddots$}};
    \draw[red] (8.5, -0.5) to[bend left] (9.5,0);
    \draw[red] (9.5, 0) to[bend left] (9.5,1.5);
    \draw[black, dashed] (7.5,1.5) to[bend right] (9.5,1.5);
    \draw[red] (9.5, 1.5) to (8.5,0.5);
    \draw[->] (8.7, 1.5) to[bend left] (8.2, 1.5);
    \node at (7.5,0.75){\tiny{$a_t$}};
    \node at (9.5,0.75){\tiny{$a_1$}};
    \node at (9,0.75){\tiny{$y$}};
    \node at (9,-0.3){\tiny{$a_2$}};
    \filldraw[red] (7.5,-1) circle (2pt);
    \filldraw[red] (9.5,-1) circle (2pt);
    \filldraw[red] (7.5,-2.5) circle (2pt);
    \filldraw[red] (8.5,-3) circle (2pt);
    \filldraw[red] (9.5,-2.5) circle (2pt);
    \node[red] at (8.5, -2){$\times$};
    \draw[red] (7.5, -1) to[bend left] (7.5,-2.5);
    \node at (8, -2.7){\textcolor{red}{$\ddots$}};
    \draw[red] (8.5, -3) to[bend left] (9.5,-2.5);
    \draw[red] (9.5, -2.5) to[bend left] (9.5,-1);
    \draw[black, dashed] (7.5,-1) to[bend right] (9.5,-1);
    \draw[red] (7.5, -1) to (8.5,-2);
    \node at (7.5,-1.75){\tiny{$a_t$}};
    \node at (9.5,-1.75){\tiny{$a_1$}};
    \node at (8.25,-1.5){\tiny{$y'$}};
    \node at (9,-2.8){\tiny{$a_2$}};
\end{tikzpicture}
\caption{Positive reflection $S_y^+$ with $y$ a non-special target vertex (right). Negative reflection $S_y^-$ with $y$ a special source vertex (left)}
\label{fig: ejemplo_refl}
\end{figure}

\begin{exmp} Let $A$ be the skew-gentle algebra defined by the \textcolor{red}{$\times$}-dissection on the left of Figure~\ref{fig: ejemplo sec 5 cut}. Thus $A= KQ/I$ where $Q$ is the quiver on the left,

\adjustbox{scale=0.9,center}{
\begin{tikzcd}
1 \arrow[r, "\alpha_1"'] \arrow["f_1"', loop, distance=1.5em, in=125, out=55] & 2 \arrow[r, "\alpha_2"'] \arrow["f_2"', loop, distance=1.5em, in=125, out=55] & 3 \arrow[d, "\alpha_3"]  & 5 \arrow[l, "\beta_1"'] \\
  &     & 4 \arrow[ru, "\beta_3"'] &    \end{tikzcd}  \hspace{15pt} \begin{tikzcd}
1' \arrow["f_{1'}"', loop, distance=1.5em, in=125, out=55] & 2 \arrow["f_2"', loop, distance=1.5em, in=125, out=55] \arrow[r, "\alpha_2"] & 3 \arrow[d, "\alpha_3"]                              & 5 \arrow[l, "\beta_1"'] \\
       & & 4 \arrow[ru, "\beta_3"'] \arrow[llu, swap,"\beta_{p_a}"'] &                        
\end{tikzcd}  }
  and $I = \langle f_1^2 - f_1, f_2^2- f_2, \alpha_1 \alpha_2 , \alpha_3 \beta_3 , \beta_1 \alpha_3 \rangle$. Then $\beta_{p_a}$ is the new arrow arising from the path $p_a=\alpha_1\alpha_2\alpha_3$ and $S_1^-(A)$ is the skew-gentle algebra defined by the quiver on the right and the ideal $J= \langle f_{1'}^2 - f_{1'}, f_2^2- f_2, \alpha_3 \beta_3 , \beta_1 \alpha_3 \rangle$, that is associated to the \textcolor{red}{$\times$}-dissection after applying a move as the one in Figure \ref{fig: ejemplo_refl} right.
\end{exmp}

\bibliographystyle{alpha}
\bibliography{bibliografia.bib}
\end{document}